\documentclass[11pt,a4paper]{article}

\usepackage[T1]{fontenc}
\usepackage{lmodern}
\usepackage{authblk}
\usepackage{amsmath,amsthm,amssymb,mathtools}
\usepackage{enumitem}
\usepackage{tikz}
\usetikzlibrary{arrows.meta,positioning}
\usepackage[margin=1in]{geometry}
\usepackage[font=small,labelfont=bf,labelsep=period,justification=raggedright,singlelinecheck=false,hypcap=false]{caption}
\usepackage[colorlinks=true,linkcolor=blue,citecolor=blue,urlcolor=blue]{hyperref}

\theoremstyle{plain}
\newtheorem{theor}{Theorem}[section]
\newtheorem{lema}[theor]{Lemma}
\newtheorem{prop}[theor]{Proposition}
\newtheorem{corollar}[theor]{Corollary}

\theoremstyle{definition}
\newtheorem{defin}[theor]{Definition}
\newtheorem{ex}[theor]{Example}

\theoremstyle{plain}
\newtheorem{note}[theor]{Remark}

\newcommand{\DCPO}{\mathbf{DCPO}}
\newcommand{\Dtop}{\mathbf{DTop}}
\newcommand{\PU}{P_U}
\newcommand{\PL}{P_L}
\newcommand{\PC}{P_C}
\newcommand{\LL}{\mathcal L}
\newcommand{\UU}{\mathcal U}
\newcommand{\CC}{\mathcal C}
\newcommand{\QQ}{\mathcal Q}
\newcommand{\KConv}{K\!\operatorname{Conv}}

\newcommand{\fin}{\mathrm{fin}}
\newcommand{\id}{\mathrm{id}}
\newcommand{\Bot}{\bot}
\newcommand{\Top}{\top}
\newcommand{\sig}{\sigma}
\newcommand{\upar}{\uparrow}
\newcommand{\down}{\downarrow}

\newcommand{\intr}{\operatorname{int}}

\newcommand{\Lim}{\operatorname{Lim}}

\title{Directed Convex Powerspaces and Convex Powerdomains}
\author[1]{Yuxu Chen}
\affil[1]{College of Mathematics, Sichuan University}
\affil[ ]{\texttt{chenyuxu@scu.edu.cn};  
}
\date{}

\begin{document}

\maketitle

\begin{abstract}
It is known that lower powerdomains preserve and reflect both continuity and quasicontinuity, while the preservation of quasicontinuity by upper and convex powerdomains had long been open.  Directed spaces provide a topological extension framework for dcpos. Powerdomains of dcpos can be characterized as the $D$-completions of the corresponding directed powerspaces. Using this observation, the authors proved in 2024 that upper powerdomains do not preserve quasicontinuity. In this paper, we prove that directed convex powerspaces and convex powerdomains do not preserve quasicontinuity, but they do reflect both continuity and quasicontinuity. We also prove that upper powerspaces and upper powerdomains reflect quasicontinuity. Together with the known results for lower and upper powerdomains, these arguments give the complete preservation and reflection profile of the lower, upper and convex powerdomains for continuity and quasicontinuity.
\end{abstract}

\noindent\textbf{Keywords.}
Directed convex powerspace; convex powerdomain; upper powerdomain; quasicontinuous dcpo; Scott completion; continuity; lens representation.

\section{Introduction}

Powerdomains are important constructions of domain theory for the denotational semantics of nondeterminism. The classical constructions include Hoare's lower powerdomain, Smyth's upper powerdomain and Plotkin's convex powerdomain~\cite{Plotkin1976,Smyth1978,AbramskyJung1994,GierzEtAl2003,Heckmann1991,Heckmann1991IPL}.  We denote these dcpo powerdomains by $\PL(L)$, $\PU(L)$ and $\PC(L)$, respectively, when the base dcpo is $L$.  
From a categorical point of view, they are free dcpo-algebras of different kinds: $\PL(L)$ is the free inflationary dcpo-semilattice, $\PU(L)$ is the free deflationary dcpo-semilattice, and $\PC(L)$ is the free dcpo-semilattice; see also the general presentation-theoretic treatment of dcpo-algebras in \cite{JungMoshierVickers2008,Koslowski1997,ChenKouLyu2024Free}. In this paper, the term powerdomain refers to the free dcpo-algebras in the category of dcpos.

It is well known that all powerdomains preserve continuity, that is, if $L$ is a continuous dcpo, then so are $\PL(L)$, $\PU(L)$ and $\PC(L)$. Moreover, these constructions admit simple topological representations: $\PL(L)$ is represented by nonempty Scott closed subsets of $L$; $\PU(L)$ by nonempty Scott compact saturated subsets of $L$; and $\PC(L)$ by nonempty compact lenses ordered by the Egli--Milner order if $L$ is coherent. 

In general, for non-continuous dcpos, lower powerdomains still have the simple closed-set representations, but upper and convex powerdomains do not. Therefore, it is known that lower powerdomains preserve and reflect both continuity and quasicontinuity, but whether upper and convex powerdomains preserve and reflect quasicontinuity had been a long-standing problem since the 1990s.  

In 2024, the authors~\cite{ChenKouLyu2024} proved that upper powerdomains do not preserve quasicontinuity by means of directed powerspaces. Directed spaces were introduced by Kou et al.~\cite{LuoXu2017,Xie2021} as a topological extension framework for dcpos, which is equivalent to monotone determined spaces introduced by Ern\'e~\cite{Erne2009}. Since 2020, Kou and Xie et al. have investigated free algebras over directed spaces and given concrete representations of directed lower and upper powerspaces~\cite{Xie2021,ChenKouLyu2024}, which have become powerful tools for studying the corresponding dcpo powerdomains. From the viewpoint of directed spaces, a dcpo is first regarded as its Scott space, a powerdomain construction is replaced by a corresponding directed powerspace construction, and Scott completion then returns the dcpo powerdomain.  In this framework, continuity and quasicontinuity are preserved and reflected by Scott completion for the directed powerspaces considered here.  Consequently, whether a powerdomain is continuous or quasicontinuous can be studied equivalently by asking whether the corresponding directed powerspace has the same property. Compared to powerdomains, directed powerspaces often have a simpler and more transparent finite-generator structure, which makes approximation properties easier to study directly at the directed-space level. 

In 2022, the authors gave simple concrete representations of directed upper powerspaces, denoted by $\UU(X)$, for a directed space $X$.  Later, in 2024, we constructed a quasicontinuous dcpo $L$ such that $\UU(\Sigma L)$ is not quasicontinuous, where $\UU(\Sigma L)$ is the directed upper powerspace of the Scott space $\Sigma L$ of $L$.  The Scott completion of $\UU(\Sigma L)$ is $\PU(L)$, and the failure of quasicontinuity transfers from the directed upper powerspace to the upper powerdomain. The proof relies on the finding that for a quasicontinuous base dcpo $L$, the following conditions are equivalent: $\PU(L)$ is quasicontinuous; $\PU(L)$ is continuous; and $\PU(L)$ is represented by the nonempty Scott compact saturated subsets of $L$. However, for convex powerdomains, the underlying set and topology are more complicated, and a similar property does not hold. Thus it is difficult to use the same method directly to study the quasicontinuity of convex powerdomains.  Nevertheless, the framework of directed powerspaces remains powerful for studying convex powerdomains.  

In this paper, we first give a concrete representation of the directed convex powerspace for a directed space $X$, denoted by $\CC(X)$.  We prove its universal property as the free directed semilattice, and then use it as the main object of study.  We then prove a retraction theorem: if a directed space $X$ has a greatest element, then the directed upper powerspace $\UU(X)$ is a retract of $\CC(X)$.  More explicitly, we construct continuous maps
\[
        e\colon \UU(X)\to \CC(X),\qquad r\colon \CC(X)\to \UU(X)
\]
with $r\circ e=\id_{\UU(X)}$. 
By this result, we use the known upper-powerdomain counterexample to show that there exists a quasicontinuous dcpo $L$ such that the directed convex powerspace $\CC(\Sigma L)$ is not quasicontinuous; this failure then transfers to the convex powerdomain $\PC(L)$. This answers the question of preservation of quasicontinuity by convex powerdomains negatively.

Moreover, within the framework of directed spaces, we also prove that directed convex powerspaces reflect both continuity and quasicontinuity, and that directed upper powerspaces reflect quasicontinuity.  These results imply that convex powerdomains reflect both continuity and quasicontinuity, and that upper powerdomains reflect quasicontinuity.  Together with the known results for lower and upper directed spaces and powerdomains, these arguments give the complete preservation and reflection profile of the lower, upper and convex powerdomains for continuity and quasicontinuity. The three kinds of powerdomains have the same preservation profile for continuity, but they have different preservation and reflection profiles for quasicontinuity. The final comparison is displayed in the following table.

\begin{center}
\begingroup
\renewcommand{\arraystretch}{1.15}
\begin{tabular}{c|cc|cc|cc}
 & \multicolumn{2}{c|}{$\LL/\PL$} & \multicolumn{2}{c|}{$\UU/\PU$} & \multicolumn{2}{c}{$\CC/\PC$}\\
\text{property} & \text{pres.} & \text{refl.} & \text{pres.} & \text{refl.} & \text{pres.} & \text{refl.}\\
\hline
\text{continuity} & \text{yes} & \text{yes} & \text{yes} & \underline{no} & \text{yes} & \textbf{yes}\\
\text{quasicontinuity} & \text{yes} & \text{yes} & \underline{no} & \textbf{yes} & \textbf{no} & \textbf{yes}
\end{tabular}
\endgroup

\smallskip
\small Preservation and reflection of continuity and quasicontinuity for lower, upper and convex powerspaces/powerdomains. The underlined entries are the results in \cite{ChenKouLyu2024}, and the bold entries are the results in this paper.
\end{center}

In particular, we show $\PC(L)$ is continuous if and only if $L$ is continuous.  We also explain why this continuity theorem should not be confused with compact-lens representability.  Unlike the upper-powerdomain situation, a bare order isomorphism between $\PC(L)$ and a compact-lens poset does not imply continuity.  To see this, we exhibit the two-chain dcpo with a common top,
\[
        Y = \{a_n : n \geq 1\} \cup \{b_n : n \geq 1\} \cup\{\omega\},
\]
which is quasicontinuous but not continuous, and prove that $\PC(Y)$ is quasicontinuous but not
continuous. This example shows that, even when a compact-lens poset is abstractly order isomorphic to $\PC(L)$, this should not be confused with a canonical representation strong enough to detect continuity.

The paper is organized as follows.  Section~2 collects the basic notions, including the published directed upper and lower powerspaces.  Section~3 introduces and constructs the directed convex powerspace.  Section~4 proves the retract theorems and uses the upper retract to obtain the main counterexample.  Section~5 proves and summarizes the remaining preservation and reflection results for continuity and quasicontinuity.  Section~6 returns to the retract at the dcpo level and explains why the directed-space formulation is technically simpler.

\section{Preliminaries}

All topological spaces under consideration are assumed to satisfy the $T_0$ separation axiom.  Order-theoretic terminology for a space always refers to its specialization order.  The directed-space viewpoint used below follows the framework of directed and monotone-convergence spaces developed in \cite{Erne2009,KeimelLawson2009,LuoXu2017,Xie2021}; for broader background on topological domain theory, see also \cite{GoubaultLarrecq2013}.

We use directed spaces as a topological extension framework for dcpos.  Every dcpo $L$ gives a directed space $\Sigma L$ by taking its Scott topology, while the Scott completion of a directed space produces a dcpo with its Scott space.  Thus one may first build finite-generator power constructions in $\Dtop$ and then recover the usual dcpo powerdomains by Scott completion.  This separation is useful because the directed powerspace often records the essential convergence data before directed suprema are freely added.

The lower, upper and convex carrier sets in $\Dtop$ are denoted by $L(X)$, $U(X)$ and $C(X)$, respectively.  After equipping these carrier sets with their directed powerspace topologies and semilattice operations, the resulting directed powerspaces are denoted by $\LL(X)$, $\UU(X)$ and $\CC(X)$.  When $X=\Sigma L$ is the Scott space of a dcpo, their Scott completions are the corresponding dcpo powerdomains $\PL(L)$, $\PU(L)$ and $\PC(L)$.  Moreover, continuity and quasicontinuity are preserved and reflected by these completions.  Hence the problems of whether $\PL(L)$, $\PU(L)$ or $\PC(L)$ is continuous or quasicontinuous may be transferred to the corresponding directed powerspace over $\Sigma L$.

\begin{defin}[Directed space]
A space $X$ is termed a \emph{directed space} if its topology is determined by convergence of directed subsets: a subset $U\subseteq X$ is open precisely when, for every directed $D\subseteq X$ converging to $x\in U$, one has $D\cap U\neq\varnothing$.  We write $\Dtop$ for the category of directed spaces and continuous maps.  The categorical product in $\Dtop$ is denoted by $\otimes$.
\end{defin}

\begin{defin}[Continuity and quasicontinuity for directed spaces]\label{def:directed-continuity}
Let $X$ be a directed space.  For $x,y\in X$, write $x\ll_d y$ if, for every directed subset $D\subseteq X$ with $D\to y$, there exists $d\in D$ such that $x\leq d$ in the specialization order of $X$.

The directed space $X$ is \emph{continuous} if every point is the limit of a directed family of elements way-below it.  Equivalently, $X$ is a $c$-space: for every open set $U$ and every $x\in U$, there exists $u\in U$ such that
\[
        x\in\intr(\upar u)\subseteq U.
\]
The directed space $X$ is \emph{quasicontinuous} if, for every open set $U$ and every $x\in U$, there exists a finite set $F\subseteq U$ such that
\[
        x\in\intr(\upar F)\subseteq U.
\]
For a dcpo $L$, these notions applied to the Scott space $(L,\sig(L))$ coincide with the usual notions of continuity and quasicontinuity for dcpos.
\end{defin}

If $D$ is a directed subset of a directed space $X$, we write
\[
        \Lim(D)=\{x\in X:D\to x\}
\]
for the set of its topological limits.  This notation is used in the convergence descriptions of the directed powerspaces below.

\begin{defin}[Retract]
Let $\mathcal C$ be a category.  An object $A$ is a \emph{retract} of an object $B$ if there are morphisms $e\colon A\to B$ and $r\colon B\to A$ such that $r\circ e=\id_A$.  In $\Dtop$ this means that $e$ and $r$ are continuous maps of directed spaces.
\end{defin}

\begin{defin}[Directed semilattice]
A \emph{directed semilattice} is a directed space $Y$ equipped with a continuous binary map
\[
        +\colon Y\otimes Y\to Y
\]
satisfying
\[
        y+y=y,\qquad (x+y)+z=x+(y+z),\qquad x+y=y+x .
\]
A homomorphism of directed semilattices is a continuous map that preserves the binary operation $+$.
\end{defin}

\begin{prop}[Continuity via a determining convergence class]\label{prop:conv-continuity-criterion}
Let $X$ be a set equipped with a convergence class $\mathcal S$, where $(A,x)\in\mathcal S$ is written $A\to_{\mathcal S}x$.  Let $\mathcal{O}_{\mathcal S}(X)$ be the topology determined by $\mathcal S$, that is, $U\subseteq X$ is open if and only if
\[
        A\to_{\mathcal S}x\in U
        \quad\Longrightarrow\quad
        A\cap U\neq\varnothing
\]
for every $(A,x)\in\mathcal S$.  If $Y$ is a topological space and $h\colon (X,\mathcal{O}_{\mathcal S}(X))\to Y$ is a map such that
\[
        A\to_{\mathcal S}x
        \quad\Longrightarrow\quad
        h(A)\to h(x)
\]
in $Y$, then $h$ is continuous.
\end{prop}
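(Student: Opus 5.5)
To prove continuity of $h$, I will verify directly that the preimage of every open set of $Y$ is open in $(X,\mathcal{O}_{\mathcal S}(X))$. Since $\mathcal{O}_{\mathcal S}(X)$ is defined by testing against the pairs in $\mathcal S$, this is the only thing that has to be checked.

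Fix an open set $V\subseteq Y$ and put $U=h^{-1}(V)$. By the definition of $\mathcal{O}_{\mathcal S}(X)$, it suffices to show the following: whenever $(A,x)\in\mathcal S$ with $x\in U$, we have $A\cap U\neq\varnothing$.

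Take such a pair $(A,x)$. By hypothesis $h(A)\to h(x)$ in $Y$, and $h(x)\in V$ with $V$ open. I read convergence $B\to y$ of a subset of a topological space in the usual sense: every open neighbourhood of $y$ meets $B$. For directed sets this is the same as eventual containment, and for the convergence notions used in this paper it at least implies that every open neighbourhood meets the set. Hence $h(A)\cap V\neq\varnothing$. Choose $a\in A$ with $h(a)\in V$; then $a\in A\cap h^{-1}(V)=A\cap U$. So $U$ is open, and $h$ is continuous.

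There is no real obstacle. The only point needing care is to make sure that the notion of convergence $h(A)\to h(x)$ in $Y$ yields "every open neighbourhood of $h(x)$ meets $h(A)$". This holds for any of the standard readings: eventual membership for directed sets or nets, or the neighbourhood condition itself.
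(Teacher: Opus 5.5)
Your proof is correct and is essentially the paper's argument: pull back an open $V\subseteq Y$, use $h(A)\to h(x)\in V$ to obtain some $a\in A$ with $h(a)\in V$, and conclude that $A\cap h^{-1}(V)\neq\varnothing$, so $h^{-1}(V)\in\mathcal{O}_{\mathcal S}(X)$. Your remark about reading convergence as ``every open neighbourhood meets the set'' is the same implicit step the paper uses.
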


\begin{proof}
Let $V\subseteq Y$ be open.  We prove that $h^{-1}(V)\in\mathcal{O}_{\mathcal S}(X)$.  Suppose $A\to_{\mathcal S}x$ and $x\in h^{-1}(V)$.  Then $h(x)\in V$.  By hypothesis, $h(A)\to h(x)$ in $Y$.  Since $V$ is an open neighbourhood of $h(x)$, the convergence $h(A)\to h(x)$ implies
\[
        h(A)\cap V\neq\varnothing.
\]
Consequently, there exists $a\in A$ with $h(a)\in V$, equivalently $a\in A\cap h^{-1}(V)$.  Thus
\[
        A\cap h^{-1}(V)\neq\varnothing.
\]
By the definition of $\mathcal{O}_{\mathcal S}(X)$, the set $h^{-1}(V)$ is open.  Since this holds for arbitrary open $V\subseteq Y$, $h$ is continuous.
\end{proof}

\begin{defin}[Powerdomains in $\DCPO$]
A \emph{dcpo-semilattice} is a dcpo $A$ equipped with a Scott-continuous binary operation $+$ satisfying idempotency, associativity and commutativity.  It is \emph{inflationary} if $x\leq x+y$ for all $x,y$, and \emph{deflationary} if $x+y\leq x$ for all $x,y$.  The \emph{lower powerdomain} $\PL(L)$ is the free inflationary dcpo-semilattice over $L$, the \emph{upper powerdomain} $\PU(L)$ is the free deflationary dcpo-semilattice over $L$, and the \emph{convex powerdomain} $\PC(L)$ is the free dcpo-semilattice over $L$ \cite{Plotkin1976,Smyth1978,Heckmann1991,AbramskyJung1994}.
\end{defin}

\begin{defin}[Scott completion, or $D$-completion]
The \emph{Scott completion}, equivalently the $D$-completion in the present directed-space setting, of a directed space $X$ is a dcpo $X^d$ together with a continuous map $j\colon X\to\Sigma X^d$, where $\Sigma X^d$ denotes the Scott space of $X^d$, satisfying the usual universal property: every continuous map from $X$ into the Scott space of a dcpo extends uniquely to a Scott-continuous map out of $X^d$.  In this paper we use Scott completions through this universal property and through the representation theorem below.
\end{defin}

\begin{theor}[$D$-completion representation {\cite{KeimelLawson2009,KeimelLawson2009Ops,ChenKouLyu2024,ChenKouLyuXie2024Cones}}]\label{the:scott-completion}
For every dcpo $L$, the lower and upper powerdomains $\PL(L)$ and $\PU(L)$ are the Scott completions of the directed lower and upper powerspaces $\LL(\Sigma L)$ and $\UU(\Sigma L)$, respectively.  The same completion principle applies to free dcpo-algebras: they are obtained by Scott completing the corresponding free directed-space algebras over $\Sigma L$.  In particular, after the construction of $\CC(\Sigma L)$ below, its Scott completion is the convex powerdomain $\PC(L)$.
\end{theor}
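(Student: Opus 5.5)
The plan is to use the universal properties on both sides and compose free constructions. Recall that $\PL(L)$, $\PU(L)$, $\PC(L)$ are the free (inflationary, deflationary, plain) dcpo-semilattices over $L$. We must show that the Scott completion $\LL(\Sigma L)^d$, $\UU(\Sigma L)^d$, $\CC(\Sigma L)^d$ has the same universal property. Here $\LL(X)$, $\UU(X)$, $\CC(X)$ are taken as the free directed (inflationary, deflationary, plain) semilattices over $X$; for $\CC$ this is established in Section~3. Uniqueness of free objects then gives the identification.

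\textbf{Step 1: the completion of a directed semilattice is a dcpo-semilattice.} Let $Y$ be a directed semilattice with Scott completion $j\colon Y\to\Sigma Y^d$. I first show that $Y^d\times Y^d$ (with Scott topology) is the Scott completion of $Y\otimes Y$ via $j\times j$. I expect this to follow from the results of \cite{KeimelLawson2009,KeimelLawson2009Ops} that $D$-completion commutes with finite products in $\Dtop$.

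Granting this, the continuous map $j\circ +\colon Y\otimes Y\to\Sigma Y^d$ extends uniquely to a Scott-continuous $+^d\colon Y^d\times Y^d\to Y^d$. Idempotency, associativity and commutativity transfer by uniqueness of extensions. For associativity, for instance, both $(x+^d y)+^d z$ and $x+^d(y+^d z)$ are Scott-continuous extensions of the same map on $Y\otimes Y\otimes Y$. Inflationarity or deflationarity transfers similarly. The inequality $x\le x+y$ says that two continuous maps to the Scott space agree after composing with the inclusion $\downarrow$/$\uparrow$. Alternatively, it holds on the image $j(Y)$ and is preserved under taking the sub-dcpo generated by $j(Y)$, which is all of $Y^d$ by the construction of $D$-completions as the smallest subdcpo (d-closure) of the sobrification.

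\textbf{Step 2: the universal property.} Let $A$ be a dcpo-semilattice of the appropriate kind and $f\colon L\to A$ Scott-continuous. Then $\Sigma A$ is a directed semilattice of the same kind, since Scott topology on $A\times A$ is the $\Dtop$ product $\Sigma A\otimes\Sigma A$. By the directed freeness of $\CC(\Sigma L)$, the map $f$ extends uniquely to a directed-semilattice homomorphism $\bar f\colon\CC(\Sigma L)\to\Sigma A$. By the universal property of the completion, $\bar f$ extends uniquely to a Scott-continuous $\hat f\colon\CC(\Sigma L)^d\to A$.

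It remains to see that $\hat f$ preserves $+^d$. The two maps $(x,y)\mapsto\hat f(x+^d y)$ and $(x,y)\mapsto\hat f(x)+\hat f(y)$ are Scott-continuous on $\CC(\Sigma L)^d\times\CC(\Sigma L)^d$. They agree on $j(\CC(\Sigma L))\times j(\CC(\Sigma L))$, so by Step 1's product completion they coincide. Uniqueness of $\hat f$ follows from composing the two uniqueness statements. The same argument applies verbatim to $\LL$ and $\UU$.

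\textbf{Main obstacle.} The main obstacle is the product claim in Step 1: that the D-completion of $Y\otimes Y$ is $Y^d\times Y^d$ with the Scott topology of the product dcpo. The subtlety is that the Scott topology on a product of dcpos can be finer than the product topology. I would handle it with the $\Dtop$ product $\Sigma Y^d\otimes\Sigma Y^d$, which is exactly $\Sigma(Y^d\times Y^d)$, because both are determined by directed convergence. I would then check the universal property coordinatewise, extending a map $Y\otimes Y\to\Sigma B$ first in one variable, then in the other, with separate continuity sufficing for dcpos.
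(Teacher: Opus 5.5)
The paper gives no proof of this statement; it imports it from Keimel--Lawson and Chen--Kou--Lyu(--Xie), and your argument is the standard one from those sources and is correct: lift $+$ to the completion using the product behaviour of Scott completion in $\Dtop$, transfer the laws by uniqueness or closure under directed suprema, then compose the universal property of the free directed semilattice with that of the completion. The one step to make explicit is in your coordinatewise extension: after extending $g(-,y)$ to $g_y\colon Y^d\to B$, continuity of $y\mapsto g_y(u)$ for $u\notin j(Y)$ is not automatic, but the set of $u$ for which it holds contains $j(Y)$ and is closed under directed suprema (a pointwise directed supremum of continuous maps into a Scott space is continuous), so it is all of $Y^d$ by the minimality you already invoke, and the same induction gives Scott continuity of the second extension in the first variable.
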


\begin{note}[Continuity, quasicontinuity and Scott completion {\cite{KeimelLawson2009,Xie2021,ChenKouLyu2024}}]\label{the:qc-completion}
For the directed powerstructures considered here, continuity and quasicontinuity are preserved and reflected by Scott completion.  Thus the corresponding directed powerspace and dcpo powerdomain have the same status with respect to these two properties.  In particular, for every dcpo $L$,
\[
        \PC(L)\text{ is quasicontinuous}
        \quad\Longleftrightarrow\quad
        \CC(\Sigma L)\text{ is quasicontinuous},
\]
and similarly
\[
        \PU(L)\text{ is quasicontinuous}
        \quad\Longleftrightarrow\quad
        \UU(\Sigma L)\text{ is quasicontinuous}.
\]
The same equivalences hold with ``quasicontinuous'' replaced by ``continuous'', and also for the lower powerspace $\LL(\Sigma L)$ and the lower powerdomain $\PL(L)$.
\end{note}

\begin{theor}[Upper powerdomains of quasicontinuous dcpos {\cite{ChenKouLyu2024}}]\label{the:upper-equivalences}
Let $L$ be a quasicontinuous dcpo.  Then the following are equivalent:
\begin{enumerate}[label=(\roman*)]
    \item $\PU(L)$ is quasicontinuous;
    \item $\PU(L)$ is continuous;
    \item the topology of $\UU(\Sigma L)$ coincides with the Scott topology of its specialization order;
    \item $\PU(L)\cong\QQ(L)$, the semilattice of nonempty Scott compact saturated subsets of $L$.
\end{enumerate}
\end{theor}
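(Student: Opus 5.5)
The plan is to pass everything through the directed upper powerspace using Remark~\ref{the:qc-completion}. By that remark, (i) is equivalent to $\UU(\Sigma L)$ being quasicontinuous, and (ii) is equivalent to $\UU(\Sigma L)$ being continuous. So all four conditions become statements about $\UU(\Sigma L)$ and its Scott completion $\PU(L)$. I take the carrier $U(\Sigma L)$ to be the finitely generated upper sets $\upar F$, with $F\subseteq L$ finite and nonempty, ordered by reverse inclusion. I will prove the cycle (iv)$\Rightarrow$(ii)$\Rightarrow$(i)$\Rightarrow$(iii)$\Rightarrow$(iv).

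\emph{(iv)$\Rightarrow$(ii).} For a quasicontinuous dcpo $L$ it is classical that $\QQ(L)$, ordered by reverse inclusion, is a continuous dcpo. Indeed, $\upar F\ll \upar G$ whenever $G\subseteq\intr(\upar F)$. Moreover, every $K\in\QQ(L)$ is the filtered intersection of the sets $\upar F$ with $K\subseteq\intr(\upar F)$; this uses Rudin's lemma and the Hofmann--Mislove theorem. So if $\PU(L)\cong\QQ(L)$, then $\PU(L)$ is continuous.

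\emph{(ii)$\Rightarrow$(i).} This is immediate, since every continuous dcpo is quasicontinuous.

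\emph{(i)$\Rightarrow$(iii).} Assume $\UU(\Sigma L)$ is quasicontinuous, so its topology has the base $\{\intr(\upar\mathcal F)\}$ with $\mathcal F$ ranging over finite families of generators. First I check that every open set of $\UU(\Sigma L)$ is Scott open in the specialization order; this uses the known convergence description of $\UU$. For the converse, take a Scott open $\mathcal W$ in $U(\Sigma L)$ and a directed family $\{\upar F_i\}$ converging to $\upar F\in\mathcal W$.
\begin{itemize}
\item Quasicontinuity of $L$ supplies finite sets $G$ with $F\subseteq \intr(\upar G)$.
\item Using these $G$, I would show that $\upar F$ is the order-supremum of a directed family of such $\upar G$ lying inside $\mathcal W$.
\item Quasicontinuity of $\UU(\Sigma L)$ then forces some $\upar F_i$ to lie above one of these $\upar G$, so $\mathcal W$ meets the family.
\end{itemize}
This is the step I expect to be the main obstacle. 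The order on $U(\Sigma L)$ does not a priori make order-suprema into topological limits. The key point to prove is that, when $\UU(\Sigma L)$ is quasicontinuous, each $\upar F$ is the directed-space limit of the sets $\upar G$ with $F\subseteq\intr(\upar G)$. I would try to obtain this by combining the basic neighbourhoods $\intr(\upar\mathcal F)$ with Rudin's lemma applied in $L$.

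\emph{(iii)$\Rightarrow$(iv).} If $\UU(\Sigma L)$ carries the Scott topology of its order, then its Scott completion is the Scott completion of that poset. Consider the map $\{\upar F_i\}\mapsto\bigcap_i\upar F_i$ sending directed families of generators to $\QQ(L)$. By Hofmann--Mislove and the well-filteredness of $\Sigma L$ (which holds since $L$ is quasicontinuous), this map is well defined, monotone and surjective. It identifies exactly those families that have the same intersection. The universal property of the Scott completion then yields an order isomorphism $\PU(L)\cong\QQ(L)$.
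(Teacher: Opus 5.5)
The paper does not prove this theorem; it quotes it from \cite{ChenKouLyu2024}, so I assess your cycle on its own terms. The steps (iv)$\Rightarrow$(ii)$\Rightarrow$(i) are fine. There is a genuine gap in (i)$\Rightarrow$(iii): you have the difficulty of the two inclusions backwards.

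\textbf{The inclusion you call the main obstacle is easy.} To show $\sig(U(\Sigma L),\leq_U)\subseteq\mathcal{O}_{\Rightarrow_U}(U(\Sigma L))$ you only need quasicontinuity of $L$. The family $\{\upar G: F\subseteq\intr\upar G\}$ is directed, and its intersection, hence its $\leq_U$-supremum, is $\upar F$. So a Scott open $\mathcal W\ni\upar F$ contains some such $\upar G$. By Proposition~\ref{prop:box-open}, $\Box(\intr\upar G)$ is then an open neighbourhood of $\upar F$ in $\UU(\Sigma L)$ contained in $\mathcal W$. No limit statement about that family and no quasicontinuity of $\UU(\Sigma L)$ is needed.

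\textbf{The inclusion you treat as routine is false without (i).} The reverse inclusion $\mathcal{O}_{\Rightarrow_U}\subseteq\sig$ cannot follow from the convergence description alone. If it did, then together with the easy inclusion every quasicontinuous $L$ would satisfy (iii), contradicting the example behind Theorem~\ref{the:main}. Concretely, take $X=2^{<\omega}\cup\{\Top\}$ and let $F_n$ be the set of nodes of length $n$.
\begin{enumerate}[label=(\alph*)]
    \item The chain $\{\upar F_n\}$ has $\leq_U$-supremum $\upar\Top$.
    \item Let $\mathcal V$ be the set of all $\upar H$ such that some infinite branch has no node in $\upar H$. Then $\mathcal V$ contains $\upar\Top$ but no $\upar F_n$.
    \item $\mathcal V$ is $\Rightarrow_U$-open. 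Given witnesses $D_1,\dots,D_k$ for $\mathcal E\Rightarrow_U\upar H\in\mathcal V$, fix a branch $c$ missing $\upar H$ and different from the finitely many branches climbed by those $D_i$ without maximum. This is possible because the branches missing $\upar H$ form a nonempty open, hence infinite, subset of Cantor space. Take $d_i=\max D_i$ when it exists (it lies in $\upar H$); otherwise take $d_i$ beyond the point where its branch leaves $c$. Then $c$ misses $\bigcup_i\upar d_i$, so the member of $\mathcal E$ inside this union lies in $\mathcal V$.
\end{enumerate}

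\textbf{Why (i) alone does not directly fix this.} Quasicontinuity of $\UU(\Sigma L)$ only gives $\upar F\in\intr\upar_U\{\upar F_1,\dots,\upar F_m\}\subseteq\mathcal V$. The set $\upar_U\{\upar F_1,\dots,\upar F_m\}$ is not closed under unions, so pulling back along $y\mapsto\upar y$ does not give a Scott open $W\supseteq F$ with $\Box W\subseteq\mathcal V$. You need a single generator, i.e.\ continuity.

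\textbf{A clean repair: prove (i)$\Rightarrow$(ii) first.} The operation of $\PU(L)$ is a Scott-continuous binary meet. Suppose $x\in\intr\upar H$ with $H$ finite, but no $h\in H$ is way below $x$. Choose directed $D_h$ with $\sup D_h\geq x$ and $D_h\cap\upar h=\varnothing$. The directed set of meets $\bigwedge_h d_h$ has supremum $\bigwedge_h\sup D_h\geq x$, so it meets $\upar H$, a contradiction. Hence some $h\in H$ satisfies $h\ll x$, and a quasicontinuous dcpo with this property is continuous (quasicontinuous meet-continuous dcpos are continuous).

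\textbf{Then (ii)$\Rightarrow$(iii).} By Remark~\ref{the:qc-completion}, $\UU(\Sigma L)$ is continuous. So for open $\mathcal V\ni\upar F$ there is $\upar G\in\mathcal V$ with $\upar F\in\intr\upar_U\upar G$. The Scott open set $W=\{y:\upar y\in\intr\upar_U\upar G\}$ satisfies $F\subseteq W\subseteq\upar G$, hence $\Box W\subseteq\mathcal V$. Now let $\mathcal E$ be directed with $\leq_U$-supremum $\upar F$. Then $\bigcap\mathcal E=\upar F$: for $k\in\bigcap\mathcal E$, $\upar(F\cup\{k\})$ is an upper bound, forcing $k\in\upar F$. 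Well-filteredness of $\Sigma L$ then puts some member of $\mathcal E$ inside $W$, hence in $\mathcal V$.

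\textbf{(iii)$\Rightarrow$(iv) is only a sketch.} Identifying directed families with equal intersections does not by itself compute a $D$-completion. You need two facts.
\begin{enumerate}[label=(\alph*)]
    \item $U(\Sigma L)$, with the topology of $\UU(\Sigma L)$, is a subspace of the sober space $\Sigma\QQ(L)$. For quasicontinuous $L$ the Scott topology of $\QQ(L)$ is the upper Vietoris topology. Under (iii) its trace coincides with the topology of $\UU(\Sigma L)$, by Proposition~\ref{prop:box-open} and the easy inclusion above.
    \item $U(\Sigma L)$ is $d$-dense in $\QQ(L)$.
\end{enumerate}
The completion is then the $d$-closure, namely $\QQ(L)$.
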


The directed lower and upper powerspaces recalled below are published directed-space analogues of the Hoare and Smyth powerdomains; see \cite{Xie2021,ChenKouXie2024RMJ,ChenKouLyu2024}.  We use only their finite-generator descriptions and their basic representation properties.

\begin{defin}[Directed upper powerspace {\cite{Xie2021,ChenKouLyu2024}}]\label{def:upper-convergence}
Let $X$ be a directed space and set
\[
        U(X)=\{\upar F:F\subseteq_{\fin}X,\ F\neq\varnothing\},
\]
ordered by reverse inclusion:
\[
        \upar F_1\leq_U\upar F_2
        \quad\Longleftrightarrow\quad
        \upar F_2\subseteq\upar F_1.
\]
For a directed subset $\mathcal E\subseteq U(X)$ and a finite nonempty set $F\subseteq X$, we write
\[
        \mathcal E\Rightarrow_U\upar F
\]
provided there exist directed subsets $D_1,\ldots,D_k$ of $X$, with $k\geq 1$, satisfying:
\begin{enumerate}[label=(\roman*)]
    \item $F\cap\Lim(D_i)\neq\varnothing$ for $1\leq i\leq k$;
    \item $F\subseteq\bigcup_{i=1}^k\Lim(D_i)$;
    \item for every $(d_1,\ldots,d_k)\in D_1\times\cdots\times D_k$, there exists $\upar G\in\mathcal E$ with
\[
        \upar G\subseteq\bigcup_{i=1}^k\upar d_i .
\]
\end{enumerate}
A subset $V\subseteq U(X)$ is $\Rightarrow_U$-\emph{open} if
\[
        \mathcal E\Rightarrow_U\upar F\in V
        \quad\Longrightarrow\quad
        \mathcal E\cap V\neq\varnothing
\]
for every directed $\mathcal E\subseteq U(X)$.  Let $\mathcal{O}_{\Rightarrow_U}(U(X))$ denote the family of all $\Rightarrow_U$-open subsets.  The \emph{directed upper powerspace} $\UU(X)$ is $U(X)$ equipped with this topology and the union operation.  This construction yields the free directed deflationary semilattice over $X$.
\end{defin}

\begin{defin}[Directed lower powerspace {\cite{Xie2021}}]\label{def:lower-convergence}
Let $X$ be a directed space and set
\[
        L(X)=\{\down F:F\subseteq_{\fin}X,\ F\neq\varnothing\},
\]
ordered by inclusion:
\[
        \down F_1\leq_L\down F_2
        \quad\Longleftrightarrow\quad
        \down F_1\subseteq\down F_2 .
\]
For a directed subset $\mathcal A\subseteq L(X)$ and a finite nonempty set $F\subseteq X$, we write
\[
        \mathcal A\Rightarrow_L\down F
\]
provided that, for every $a\in F$, there exists a directed subset $D_a$ of $X$ such that
\[
        a\in\Lim(D_a)
        \quad\text{and}\quad
        D_a\subseteq\bigcup\{\down G:\down G\in\mathcal A\}.
\]
Let $\mathcal{O}_{\Rightarrow_L}(L(X))$ denote the topology determined by this convergence.  We write $\LL(X)$ for the resulting directed lower powerspace, equipped with the union operation.  It is the free directed inflationary semilattice over $X$.
\end{defin}

\section{Directed Convex Powerspaces}

We first give a simple topological representation of directed convex powerspaces, which is the directed-space analogue of the Plotkin convex powerdomain.  The construction is based on finite generators, and the topology is determined by a convergence class.  The resulting space is a directed semilattice, and it satisfies the universal property of the free directed semilattice over a directed space.  The Scott completion of this directed convex powerspace over the Scott space of a dcpo $L$ is the corresponding convex powerdomain.

\begin{defin}[Directed convex powerspace]
Let $X$ be a directed space.  A \emph{directed convex powerspace} of $X$ is a directed semilattice $\CC(X)$, together with a continuous map $i_X\colon X\to \CC(X)$, such that for every directed semilattice $Y$ and every continuous map $f\colon X\to Y$, there exists a unique continuous semilattice homomorphism $\bar f\colon \CC(X)\to Y$ satisfying $\bar f\circ i_X=f$.
\end{defin}

\begin{defin}[Finite-lens construction]\label{def:convex-convergence}
Let $X$ be a directed space.  For a nonempty finite subset $F\subseteq X$, we define
\[
        \widehat F=(\down F,\upar F),
        \qquad
        C(X)=\{\widehat F:F\subseteq_{\fin} X,\ F\neq\varnothing\}.
\]
We define an order on $C(X)$ by
\[
        \widehat F_1\leq_C\widehat F_2
        \quad\Longleftrightarrow\quad
        \down F_1\subseteq\down F_2
        \ \text{and}\
        \upar F_2\subseteq\upar F_1.
\]
If $\mathcal D\subseteq C(X)$ is directed and $F\subseteq_{\fin}X$ is nonempty, we write
\[
        \mathcal D\Rightarrow_C\widehat F
\]
provided there exist directed subsets $D_1,\ldots,D_k$ of $X$, with $k\geq 1$, satisfying:
\begin{enumerate}[label=(\roman*)]
    \item $D_i\subseteq\bigcup\{\down G:\widehat G\in\mathcal D\}$ for $1\leq i\leq k$;
    \item $F\cap\Lim(D_i)\neq\varnothing$ for $1\leq i\leq k$;
    \item $F\subseteq\bigcup_{i=1}^k\Lim(D_i)$;
    \item for every $(d_1,\ldots,d_k)\in D_1\times\cdots\times D_k$, there exists $\widehat G\in\mathcal D$ such that
    \[
            \upar G\subseteq\bigcup_{i=1}^k\upar d_i .
    \]
\end{enumerate}
A subset $U\subseteq C(X)$ is called $\Rightarrow_C$-\emph{open} if
\[
        \mathcal D\Rightarrow_C\widehat F\in U
        \quad\Longrightarrow\quad
        \mathcal D\cap U\neq\varnothing
\]
for every directed $\mathcal D\subseteq C(X)$.  Let $\mathcal{O}_{\Rightarrow_C}(C(X))$ denote the family of all $\Rightarrow_C$-open subsets.
\end{defin}

\begin{prop}\label{prop:convex-topology}
$(C(X),\mathcal{O}_{\Rightarrow_C}(C(X)))$ is a directed space, and its specialization order coincides with $\leq_C$.
\end{prop}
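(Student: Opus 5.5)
We split the proof into two parts: first identify the specialization order, then check that the topology is determined by directed convergence.

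\textbf{Specialization order.} We will use the two halves of the order $\leq_C$ separately: the lower half $\down F_1\subseteq\down F_2$ and the upper half $\upar F_2\subseteq\upar F_1$.

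For the inclusion of $\leq_C$ into the specialization order, suppose $\widehat F_1\leq_C\widehat F_2$. We will show $\{\widehat F_1\}\Rightarrow_C\widehat F_2$, so that every $\Rightarrow_C$-open set containing $\widehat F_2$ contains $\widehat F_1$. For each $b\in F_2$, pick $a_b\in F_1$ with $b\le a_b$, and let $D_b=\{a_b\}$.
\begin{itemize}
\item Each $D_b$ is a constant directed set, and $b\in\Lim(D_b)$ because $b\le a_b$ means every open set containing $b$ contains $a_b$.
\item Hence conditions (ii) and (iii) hold, and (i) holds since $a_b\in\down F_1$.
\item For (iv), $\upar F_1\subseteq\bigcup_b\upar a_b$ holds trivially because all $a_b$ lie in $F_1$. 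This is not quite the needed shape: we want $\upar G\subseteq\bigcup_b\upar a_b$ with $G=F_1$.
\end{itemize}
So (iv) forces us to choose the $D$'s more carefully. Since $\upar F_2\subseteq\upar F_1$, every $b\in F_2$ lies above some element of $F_1$, and $\down F_1\subseteq\down F_2$ means every element of $F_1$ lies below some element of $F_2$. We therefore take one directed set for each $a\in F_1$: $D_a=\{a\}$, together with a point $b_a\in F_2$ with $a\le b_a$. Then $b_a\in\Lim(D_a)$ requires $b_a\le a$, which is the wrong direction.

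The correct device is to index the directed sets by pairs $(a,b)\in F_1\times F_2$ with $a\le b$, taking $D_{(a,b)}=\{a\}$. Then:
\begin{itemize}
\item Each $D_{(a,b)}$ converges to every point below $a$. This includes the points $b'\in F_2$ with $b'\le a$, which exist for every $b'$ by the upper half of the order.
\item Conversely, with $D_a=\{a\}$ ranging over all $a\in F_1$, condition (iv) holds with $G=F_1$.
\item Condition (iii) holds because each $b\in F_2$ lies above some $a\in F_1$, and $b\in\Lim(\{a\})$ holds exactly when $b\le a$.
\end{itemize}
Here the genuine difficulty appears, and we will handle it using both halves of the order. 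The plan is to choose, for each $a\in F_1$, a directed set $D_a$ that converges to some point of $F_2$ and lies in $\down F_1$. The subtle point is reconciling the convergence direction with the order. We expect this to be the main bookkeeping step; if it fails, we will instead verify monotonicity directly, by showing that every $\Rightarrow_C$-open set is an upper set with respect to $\leq_C$ through the enlargement $\mathcal D\mapsto$ (elements of $\mathcal D$).

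For the converse inclusion, we will exhibit enough $\Rightarrow_C$-open sets:
\[
\Diamond U=\{\widehat F: F\cap U\ne\varnothing\}\quad\text{and}\quad \Box U=\{\widehat F: F\subseteq U\},\qquad U \text{ open in } X.
\]
We check that each is $\Rightarrow_C$-open.
\begin{itemize}
\item For $\Diamond U$: use (ii) and (iii) to get some $D_i$ converging into $U$, so some $d\in D_i\cap U$. By (i), $d\le g$ for some $g\in G$ with $\widehat G\in\mathcal D$, and $U$ is an upper set, so $\widehat G\in\Diamond U$.
\item For $\Box U$: (ii) and (iii) give that each $D_i$ meets $U$. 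Directedness then gives a tuple $(d_i)$ with all $d_i\in U$, and (iv) yields $\widehat G\in\mathcal D$ with $G\subseteq\upar\{d_i\}\subseteq U$.
\end{itemize}
Then if $\widehat F_1$ lies in every open set that contains $\widehat F_2$, the usual $T_0$ argument separates points with $\down F_1\not\subseteq\down F_2$ (use $\Diamond(X\setminus\down F_2)$) or with $\upar F_2\not\subseteq\upar F_1$ (use $\Box(X\setminus\down x)$ for some $x\in F_2\setminus\upar F_1$). This gives $\leq_C$ and also $T_0$.

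\textbf{Directed space.} We show that every directed $\mathcal D\Rightarrow_C\widehat F$ converges topologically to $\widehat F$, which holds by the definition of the topology. Conversely, if a set $U$ meets every directed $\mathcal D$ that converges topologically to a point of $U$, then in particular it meets every $\mathcal D\Rightarrow_C\widehat F$, so $U$ is $\Rightarrow_C$-open. Hence the topology is exactly the one determined by directed convergence.
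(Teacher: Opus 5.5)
There is a genuine gap. The inclusion of $\le_C$ into the specialization order is never established, and your attempt at it runs in the wrong direction. In a space, $\widehat F_1$ is below $\widehat F_2$ in the specialization order precisely when every open set containing $\widehat F_1$ also contains $\widehat F_2$. So from $\widehat F_1\le_C\widehat F_2$ you need $\{\widehat F_2\}\Rightarrow_C\widehat F_1$, not $\{\widehat F_1\}\Rightarrow_C\widehat F_2$.

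The latter is false in general. Take the two-point chain $a<b$ with its Scott topology. Then $\widehat{\{a\}}\le_C\widehat{\{b\}}$, but clause (i) forces every witness to be $\{a\}$, and $b\notin\Lim(\{a\})$ because $\{b\}$ is open. This is why your bookkeeping keeps producing inequalities the wrong way round. For instance, $\upar F_2\subseteq\upar F_1$ says each $b'\in F_2$ lies \emph{above} some element of $F_1$, not below. The fallback you mention is not an argument.

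The correct step is one line. With $F_1=\{a_1,\ldots,a_n\}$, take the family $\{\widehat F_2\}$ and witnesses $D_i=\{a_i\}$. Clause (i) holds since $a_i\in\down F_1\subseteq\down F_2$. Clauses (ii)--(iii) hold since $a_i\in\Lim(\{a_i\})$. Clause (iv) holds since $\upar F_2\subseteq\upar F_1=\bigcup_i\upar a_i$. This is exactly the paper's argument.

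The omission leaves a second hole. This step is the statement that every $\Rightarrow_C$-open set is an upper set for $\le_C$. That is what shows $\mathcal{O}_{\Rightarrow_C}(C(X))$ is closed under finite intersections: given $\widehat G_U\in\mathcal D\cap U$ and $\widehat G_V\in\mathcal D\cap V$, take an upper bound in $\mathcal D$ and use upperness. You never verify that $\mathcal{O}_{\Rightarrow_C}(C(X))$ is a topology, and for an arbitrary convergence class it need not be. So your final paragraph on directedness rests on this unproved step.

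By contrast, your reverse inclusion is sound once the same convention slip is corrected. It uses $\Diamond U$ together with your $\Box U=\{\widehat F:F\subseteq U\}$, and both openness arguments are correct. Note that $\Diamond(X\setminus\down F_2)$ and $\Box(X\setminus\down x)$ contain $\widehat F_1$ and miss $\widehat F_2$. So they refute ``every open neighbourhood of $\widehat F_1$ contains $\widehat F_2$'', not the hypothesis as you phrased it.

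This route is a modular repackaging of the paper's direct proof that $\{\widehat F:\widehat F\le_C\widehat F_2\}$ is closed. That set is exactly the complement of $\Diamond(X\setminus\down F_2)\cup\bigcup_{x\in F_2}\Box(X\setminus\down x)$.
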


\begin{proof}
The empty set and $C(X)$ are $\Rightarrow_C$-open.  If $U,V$ are $\Rightarrow_C$-open and $\mathcal D\Rightarrow_C\widehat F\in U\cap V$, choose $\widehat G_U\in\mathcal D\cap U$ and $\widehat G_V\in\mathcal D\cap V$.  Since $\mathcal D$ is directed, there exists $\widehat G\in\mathcal D$ with $\widehat G_U,\widehat G_V\leq_C\widehat G$.  Every $\Rightarrow_C$-open set is an upper set for $\leq_C$, because $\widehat F_1\leq_C\widehat F_2$ implies $\{\widehat F_2\}\Rightarrow_C\widehat F_1$: if $F_1=\{a_1,\ldots,a_n\}$, the singleton directed sets $\{a_1\},\ldots,\{a_n\}$ satisfy the four clauses in Definition~\ref{def:convex-convergence}.  Hence $\widehat G\in U\cap V$.  Arbitrary unions are immediate from the definition, so $\mathcal{O}_{\Rightarrow_C}(C(X))$ is a topology.

The same observation shows that if $\widehat F_1\leq_C\widehat F_2$, then every open neighbourhood of $\widehat F_1$ contains $\widehat F_2$, so $\widehat F_1$ is below $\widehat F_2$ in the specialization order.  Conversely, for a fixed $\widehat F_2$, the set
\[
        \{\widehat F:\widehat F\leq_C\widehat F_2\}
\]
is closed.  Indeed, suppose $\mathcal D\Rightarrow_C\widehat G$, witnessed by $D_1,\ldots,D_k$, and every member of $\mathcal D$ is $\leq_C\widehat F_2$.  Then each $D_i$ is contained in $\down F_2$.  Since $\down F_2$ is closed and every $g\in G$ is a limit of some $D_i$, we have $G\subseteq\down F_2$, and hence $\down G\subseteq\down F_2$.

It remains to show $\upar F_2\subseteq\upar G$.  Let $y\in\upar F_2$.  If $y\notin\upar G$, then for every $i$ we may choose $g_i\in G\cap\Lim(D_i)$ and have $g_i\not\leq y$.  The set $X\setminus\down y$ is open, so we can choose $d_i\in D_i\cap(X\setminus\down y)$ for each $i$.  By the fourth clause of $\Rightarrow_C$, there exists $\widehat H\in\mathcal D$ such that
\[
        \upar H\subseteq\bigcup_{i=1}^k\upar d_i .
\]
Since $\widehat H\leq_C\widehat F_2$, we have $y\in\upar F_2\subseteq\upar H$, and therefore $y\in\upar d_i$ for some $i$.  This contradicts $d_i\not\leq y$.  Thus $y\in\upar G$, proving $\widehat G\leq_C\widehat F_2$.  Therefore the specialization order is precisely $\leq_C$.

Finally, by definition $\mathcal D\Rightarrow_C\widehat F$ implies convergence of $\mathcal D$ to $\widehat F$ in $\mathcal{O}_{\Rightarrow_C}(C(X))$.  Hence every directed-open subset is $\Rightarrow_C$-open.  The reverse inclusion holds automatically for every topology generated in this manner.  Consequently, the topology is directed, and $(C(X),\mathcal{O}_{\Rightarrow_C}(C(X)))$ is a directed space.
\end{proof}

For the operation in this semilattice, we define it as:
\[
        \widehat F\oplus\widehat G=\widehat{F\cup G}.
\]
We now show that this operation is continuous, and $(C(X),\mathcal{O}_{\Rightarrow_C}(C(X)),\oplus)$ is a directed semilattice.

\begin{prop}\label{prop:semilattice}
$(C(X),\mathcal{O}_{\Rightarrow_C}(C(X)),\oplus)$ is a directed semilattice.
\end{prop}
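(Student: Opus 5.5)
Proposition~\ref{prop:convex-topology} already shows that $C(X)$ is a directed space. Two things remain: that $\oplus$ is well defined and satisfies the semilattice laws, and that $\oplus$ is continuous as a map $C(X)\otimes C(X)\to C(X)$.

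\emph{Well-definedness and the laws.} If $\widehat F=\widehat F'$ and $\widehat G=\widehat G'$, then
\[
\down(F\cup G)=\down F\cup\down G=\down F'\cup\down G',
\]
and the same holds for $\upar$. Hence $\widehat{F\cup G}=\widehat{F'\cup G'}$. Idempotency, associativity and commutativity then follow from the corresponding laws for union of finite sets.

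\emph{Continuity: reduction to one variable.} In $\Dtop$, a map out of the product $\otimes$ is continuous iff it is separately continuous. For directed spaces this is the standard fact that directed-space products are determined by directed convergence of pairs, and such pairs can be split coordinatewise. So I would fix $\widehat H$ and prove that $\widehat F\mapsto\widehat F\oplus\widehat H$ is continuous.

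By Proposition~\ref{prop:conv-continuity-criterion} with the convergence class $\Rightarrow_C$, it suffices to prove the following. Suppose $\mathcal D\Rightarrow_C\widehat F$, witnessed by $D_1,\ldots,D_k$. Then the directed family
\[
\mathcal D\oplus\widehat H=\{\widehat G\oplus\widehat H:\widehat G\in\mathcal D\}
\]
converges to $\widehat{F\cup H}$. I would in fact show $\mathcal D\oplus\widehat H\Rightarrow_C\widehat{F\cup H}$. Write $H=\{h_1,\ldots,h_m\}$ and take as witnesses $D_1,\ldots,D_k$ together with the constant directed sets $\{h_1\},\ldots,\{h_m\}$. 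Check the four clauses in turn.
\begin{enumerate}[label=(\roman*)]
\item Each $D_i$ lies in $\bigcup\{\down G:\widehat G\in\mathcal D\}$, which is contained in $\bigcup\{\down(G\cup H)\}$. Each $\{h_j\}$ lies in $\down(G\cup H)$ for every $G$.
\item $F\cap\Lim(D_i)\neq\varnothing$ gives $(F\cup H)\cap\Lim(D_i)\neq\varnothing$. Also $h_j\in\Lim\{h_j\}$.
\item $F\cup H\subseteq\bigcup_i\Lim(D_i)\cup\bigcup_j\Lim\{h_j\}$.
\item Take $(d_1,\ldots,d_k,h_1,\ldots,h_m)$. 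Choose $\widehat G\in\mathcal D$ with $\upar G\subseteq\bigcup_i\upar d_i$. Then
\[
\upar(G\cup H)\subseteq\bigcup_i\upar d_i\cup\bigcup_j\upar h_j .
\]
\end{enumerate}
So the clauses are immediate.

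\emph{Main obstacle.} The delicate point is the reduction from joint to separate continuity on $C(X)\otimes C(X)$. If the cited product fact is not available in the needed form, I would argue directly. A directed set converging in the product can be taken as a directed family of pairs $(\widehat G_\alpha,\widehat K_\alpha)$ whose projections satisfy $\Rightarrow_C$ to $\widehat F$ and to $\widehat H$; this uses that the topology on $C(X)$ is determined by $\Rightarrow_C$. I would then combine the two witness families $D_1,\ldots,D_k$ and $E_1,\ldots,E_l$ as above. For clause (iv), given a tuple, pick $\widehat G\in\mathcal D$ and $\widehat K\in\mathcal K$ covering the respective parts, and use directedness of the pair family to find a single index $\alpha$ dominating both. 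Monotonicity of $\oplus$ in $\leq_C$ then gives
\[
\upar(G_\alpha\cup K_\alpha)\subseteq\upar G\cup\upar K .
\]
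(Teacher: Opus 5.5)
Your proposal matches the paper's proof: you reduce to separate continuity via the product property of $\Dtop$, fix one argument, adjoin the singletons of the fixed generator as extra witnesses, check the four clauses of $\Rightarrow_C$, and conclude with Proposition~\ref{prop:conv-continuity-criterion}. Your explicit well-definedness check is a small addition that the paper leaves implicit. The fallback in your last paragraph is unnecessary, and its key claim is unjustified: a directed family of pairs converging in the product can have projections that converge topologically in $\mathcal{O}_{\Rightarrow_C}$ without satisfying $\Rightarrow_C$, because topological convergence is generally weaker than $\Rightarrow_C$. You should therefore rely on the standard separate-continuity reduction you cite first, which is what the paper uses.
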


\begin{proof}
Idempotency, associativity and commutativity follow from the corresponding laws for union.  It remains to establish continuity.  By the product property in $\Dtop$, it suffices to prove continuity in each variable.  Fix $\widehat G$.  The map
\[
        T_G(\widehat F)=\widehat{F\cup G}
\]
is monotone.  Suppose $\mathcal D\Rightarrow_C\widehat F$, witnessed by directed sets $D_1,\ldots,D_k$, and write $G=\{b_1,\ldots,b_m\}$.  Adjoin the singleton directed sets $\{b_1\},\ldots,\{b_m\}$.  The original witnesses still satisfy the lower-coordinate condition after applying $T_G$, and each new singleton is contained in every lower component $\down(H\cup G)$ with $\widehat H\in\mathcal D$.  The limit-covering clauses are immediate for $F\cup G$.  Finally, for a tuple $(d_1,\ldots,d_k,b_1,\ldots,b_m)$, choose $\widehat H\in\mathcal D$ with
\[
        \upar H\subseteq\bigcup_{i=1}^k\upar d_i .
\]
Then
\[
        \upar(H\cup G)
        \subseteq
        \bigcup_{i=1}^k\upar d_i\cup\bigcup_{j=1}^m\upar b_j .
\]
Thus $T_G(\mathcal D)\Rightarrow_C\widehat{F\cup G}$.  By Proposition~\ref{prop:conv-continuity-criterion}, $T_G$ is continuous.  Symmetry yields continuity in the other variable.
\end{proof}

The following theorem establishes that the constructed directed semilattices are exactly the directed convex powerspaces of directed spaces.
Hence, the directed convex powerspaces have a simple topological representation, which can be used to study the properties of directed convex powerspaces and convex powerdomains.

\begin{theor}\label{the:directed-convex-free}
For every directed space $X$, the directed semilattice $(C(X),\mathcal{O}_{\Rightarrow_C}(C(X)),\oplus)$, with unit
\[
        i_X(x)=\widehat{\{x\}},
\]
is the directed convex powerspace of $X$.
\end{theor}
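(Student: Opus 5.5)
We need to verify three things: $i_X$ is continuous, every continuous $f\colon X\to Y$ into a directed semilattice extends to a continuous homomorphism $\bar f$, and this extension is unique. Uniqueness and the algebraic part are immediate. Every element of $C(X)$ has the form $\widehat F=\widehat{\{a_1\}}\oplus\cdots\oplus\widehat{\{a_n\}}$ for $F=\{a_1,\ldots,a_n\}$. So a homomorphism extending $f$ must be
\[
        \bar f(\widehat F)=f(a_1)+\cdots+f(a_n).
\]
The first task is well-definedness. If $\widehat F=\widehat{F'}$, then $\down F=\down F'$ and $\upar F=\upar F'$. I would show that $\sum_{a\in F}f(a)$ depends only on the lens. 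It is enough to prove that adding a point $c$ with $a\leq c\leq b$ for some $a,b\in F$ does not change the sum, and that removing such points does not either.

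\textbf{Order lemma.} Continuous maps preserve the specialization order. We need monotonicity of $+$ in each variable, and the key identity: in a directed semilattice, if $u\leq v\leq w$ then $u+w=u+v+w$. One direction uses monotonicity: $u+w=u+u+w\leq u+v+w$, since $u\leq v$. The other direction uses $u+v+w\leq u+w+w=u+w$, since $v\leq w$. Hence $u+w=u+v+w$.

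Given two finite sets with the same lens, take the union $F\cup F'$. Each $c\in F'\setminus F$ lies between elements of $F$, since $c\in\down F\cap\upar F$. By the identity, the sum over $F\cup F'$ equals the sum over $F$, and symmetrically it equals the sum over $F'$. So $\bar f$ is well defined. It is a homomorphism because $\oplus$ is union and $+$ is idempotent. Continuity of $i_X$ follows from Proposition~\ref{prop:conv-continuity-criterion} with the class of directed convergence in $X$. Indeed, $D\to x$ gives $i_X(D)\Rightarrow_C\widehat{\{x\}}$, witnessed by $k=1$ and $D_1=D$: clause (iv) holds with $G=\{d\}$, and clause (i) holds because $D\subseteq\bigcup_{d\in D}\down d$.

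\textbf{Continuity of $\bar f$ (main obstacle).} By Proposition~\ref{prop:conv-continuity-criterion}, it suffices to show that $\mathcal D\Rightarrow_C\widehat F$, with witnesses $D_1,\ldots,D_k$, implies $\bar f(\mathcal D)\to\bar f(\widehat F)$ in $Y$. Since $Y$ is a directed space, $+$ is jointly continuous on the $\Dtop$-product, which I would use as follows. The product $D_1\times\cdots\times D_k$ is directed and converges in $\otimes$ to every tuple of limits. Choose $g_i\in F\cap\Lim(D_i)$ with $\{g_1,\ldots,g_k\}=F$; this needs repetitions, and clauses (ii) and (iii) make it possible. Continuity of $+$ iterated $k$ times then gives
\[
        \{f(d_1)+\cdots+f(d_k)\}\to f(g_1)+\cdots+f(g_k)=\bar f(\widehat F).
\]
It remains to compare this net with $\bar f(\mathcal D)$. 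Let $V$ be an open neighbourhood of $\bar f(\widehat F)$. Then some tuple has $s=\sum_i f(d_i)\in V$, and $V$ is an upper set. By clause (iv) there is $\widehat H\in\mathcal D$ with $\upar H\subseteq\bigcup_i\upar d_i$. By clause (i), each $d_i$ lies below some member of $\mathcal D$, and directedness gives $\widehat K\in\mathcal D$ above $\widehat H$ with $d_i\in\down K$ for all $i$. For this $K$, every $d_i$ is below some element of $K$, and every element of $K$ is above some $d_i$. Then $K\cup\{d_1,\ldots,d_k\}$ has the same upper part as $K$, and its lower part $\down K$ is also unchanged. Hence
\[
        \bar f(\widehat K)=\bar f(\widehat K)+s .
\]
It remains to deduce $\bar f(\widehat K)\in V$, and this is the delicate point, because $s\in V$ does not directly give $t+s\in V$. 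To get around this, I would use a neighbourhood argument in the product. Apply continuity of $+$ at the point $(\bar f(\widehat F),\bar f(\widehat F))$, and make the $\widehat K$-part also converge to $\bar f(\widehat F)$ by running the same argument with joint limits. If this route fails, the alternative is to prove $\bar f(\widehat K)\geq s$. That would require $K\subseteq$ the lens generated by the $d_i$, which is not guaranteed. So the convergence of $\bar f(\mathcal D)$ is where the real work lies, and I would first try the product-convergence trick.
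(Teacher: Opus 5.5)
Your uniqueness and well-definedness arguments (via $u+w=u+v+w$ for $u\leq v\leq w$), the homomorphism property, and the continuity of $i_X$ are fine. The convergence $\{\sum_i f(d_i)\}\to\bar f(\widehat F)$ is essentially the paper's computation. One small repair: when $|F|>k$ you cannot literally have $\{g_1,\ldots,g_k\}=F$, but repeating some $D_i$ in the witness list preserves all four clauses, so this is harmless.

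The genuine gap is the final step, which you leave open, and both of your diagnoses there are off.
\begin{itemize}
\item The claim that $K\cup\{d_1,\ldots,d_k\}$ determines the same lens as $K$ is false. Clause (iv) gives $\upar K\subseteq\bigcup_i\upar d_i$, i.e.\ every element of $K$ is above some $d_i$, not the reverse. So $\upar(K\cup\{d_1,\ldots,d_k\})=\upar\{d_1,\ldots,d_k\}$, which can be strictly larger than $\upar K$: take $X=\{x<y\}$, $K=\{y\}$, $d_1=x$. With $Y=\CC(X)$ and $f=i_X$ (so $\bar f=\id$), the identity $\bar f(\widehat K)=\bar f(\widehat K)+s$ visibly fails.
\item The product-convergence trick is circular. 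Making ``the $\widehat K$-part'' converge to $\bar f(\widehat F)$ is precisely the statement $\bar f(\mathcal D)\to\bar f(\widehat F)$ that you are trying to prove.
\end{itemize}

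The missing idea is this. What you have established about $K$, namely that every $d_i$ lies in $\down K$ and $\upar K\subseteq\bigcup_i\upar d_i$, is exactly $\widehat{\{d_1,\ldots,d_k\}}\leq_C\widehat K$. You then need $\bar f$ to be monotone for the Egli--Milner order $\leq_C$, not merely constant on equal lenses.

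This is elementary. Suppose $\widehat F_1\leq_C\widehat F_2$. For each $a\in F_1$ pick $b_a\in F_2$ with $a\leq b_a$, and for each $b\in F_2$ pick $a_b\in F_1$ with $a_b\leq b$. Idempotency and monotonicity of $+$ then give
\[
\bar f(\widehat F_1)=\sum_{a\in F_1}f(a)+\sum_{b\in F_2}f(a_b)\leq\sum_{a\in F_1}f(b_a)+\sum_{b\in F_2}f(b)=\bar f(\widehat F_2).
\]
Hence $\bar f(\widehat K)\geq\sum_i f(d_i)\in V$, and since $V$ is an upper set, $\bar f(\widehat K)\in V$. Contrary to what you suggest, no inclusion of $K$ in the lens generated by the $d_i$ is needed.

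This is exactly how the paper argues. It proves monotonicity of $\bar f$ first, which also yields well-definedness and makes your separate order lemma unnecessary. It then applies monotonicity to $\widehat{\{d_1,\ldots,d_k\}}\leq_C\widehat G$ for a suitable $\widehat G\in\mathcal D$ chosen above the witnesses from clauses (i) and (iv).
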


\begin{proof}
The map $i_X$ is continuous.  Indeed, if $D\to x$ is directed in $X$, then $\{\widehat{\{d\}}:d\in D\}\Rightarrow_C\widehat{\{x\}}$, and Proposition~\ref{prop:conv-continuity-criterion} yields continuity.

Let $Y$ be a directed semilattice and let $f\colon X\to Y$ be continuous.  Define
\[
        \bar f(\widehat F)=\sum_{a\in F} f(a),
\]
where the sum is taken with respect to the semilattice operation of $Y$.  We first note that this is well-defined and monotone.  Suppose $\widehat F_1\leq_C\widehat F_2$, say $F_1=\{a_1,\ldots,a_m\}$ and $F_2=\{b_1,\ldots,b_n\}$.  From $\down F_1\subseteq\down F_2$, choose, for each $a_i$, an element $b_i'\in F_2$ with $a_i\leq b_i'$.  Put $R=F_2\setminus\{b_1',\ldots,b_m'\}$.  For every $b\in R$, the inclusion $\upar F_2\subseteq\upar F_1$ gives $a_b\in F_1$ with $a_b\leq b$.  Using monotonicity, idempotency and commutativity of the semilattice operation,
\[
        \sum_{a\in F_1}f(a)
        =
        \sum_{a\in F_1}f(a)+\sum_{b\in R}f(a_b)
        \leq
        \sum_{i=1}^m f(b_i')+\sum_{b\in R} f(b)
        =
        \sum_{b\in F_2}f(b).
\]
Thus $\widehat F_1\leq_C\widehat F_2$ implies $\bar f(\widehat F_1)\leq\bar f(\widehat F_2)$.  If $\widehat F_1=\widehat F_2$, the reverse inequality follows as well, so $\bar f$ is independent of the chosen finite generator.  Clearly $\bar f\circ i_X=f$ and $\bar f(\widehat F\oplus\widehat G)=\bar f(\widehat F)+\bar f(\widehat G)$.

It remains to prove continuity of $\bar f$.  Suppose $\mathcal D\Rightarrow_C\widehat F$, witnessed by $D_1,\ldots,D_k$.  For each $i$, choose $b_i\in F\cap\Lim(D_i)$, and for each $a\in F\setminus\{b_1,\ldots,b_k\}$ choose an index $i(a)$ with $a\in\Lim(D_{i(a)})$.  Repetitions among the chosen $b_i$ are harmless by idempotency, and the second sum below is omitted if the displayed set is empty.  Since $f$ is continuous and the semilattice operation of $Y$ is continuous, the directed family
\[
        \left\{
        \sum_{i=1}^k f(d_i)+
        \sum_{a\in F\setminus\{b_1,\ldots,b_k\}} f(e_a):
        d_i\in D_i,\ e_a\in D_{i(a)}
        \right\}
        \longrightarrow
        \sum_{i=1}^k f(b_i)+
        \sum_{a\in F\setminus\{b_1,\ldots,b_k\}} f(a)
        =
        \bar f(\widehat F).
\]
Let $U$ be an open neighbourhood of $\bar f(\widehat F)$.  Choose $d_i\in D_i$ and $e_a\in D_{i(a)}$ such that the displayed sum belongs to $U$.  By directedness of the $D_i$, enlarge the $d_i$'s so that $e_a\leq d_{i(a)}$ for all $a$.  Since $U$ is upper, we then have
\[
        \sum_{i=1}^k f(d_i)\in U .
\]
By the fourth clause in Definition~\ref{def:convex-convergence}, choose $\widehat G_0\in\mathcal D$ with
\[
        \upar G_0\subseteq\bigcup_{i=1}^k\upar d_i .
\]
By the first clause, for each $i$ choose $\widehat G_i\in\mathcal D$ such that $d_i\in\down G_i$.  Since $\mathcal D$ is directed, choose $\widehat G\in\mathcal D$ above $\widehat G_0,\widehat G_1,\ldots,\widehat G_k$.  Then
\[
        \widehat{\{d_1,\ldots,d_k\}}\leq_C\widehat G .
\]
Indeed, the lower inclusion follows from $d_i\in\down G_i\subseteq\down G$, while the upper inclusion follows from $\upar G\subseteq\upar G_0\subseteq\bigcup_i\upar d_i$.
By monotonicity of $\bar f$ and upperness of $U$, $\bar f(\widehat G)\in U$.  Hence $\bar f(\mathcal D)\to\bar f(\widehat F)$, and Proposition~\ref{prop:conv-continuity-criterion} gives continuity of $\bar f$.

Uniqueness is forced by the unit and the semilattice law: every $\widehat F$ is the finite sum of the generators $\widehat{\{a\}}$ with $a\in F$.  Thus any continuous semilattice homomorphism extending $f$ must equal $\bar f$.
\end{proof}

We henceforth write
\[
        \CC(X)=(C(X),\mathcal{O}_{\Rightarrow_C}(C(X)),\oplus)
\]
and call it the \emph{directed convex powerspace} of $X$.  By Theorem~\ref{the:scott-completion}, the Scott completion of $\CC(\Sigma L)$ is the convex powerdomain $\PC(L)$ of a dcpo $L$.

\section{Retractions and the Counterexample}

The first use of directed convex powerspaces is the construction of canonical retracts onto the directed upper and lower powerspaces recalled in Section~2.  The upper retract is the mechanism that transfers the known upper-powerdomain counterexample to the convex setting.

\begin{theor}\label{the:directed-retract}
Let $X$ be a directed space with a greatest element $\Top$.  Then $\UU(X)$ is a retract of $\CC(X)$.
\end{theor}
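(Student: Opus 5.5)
The plan is to take for $r$ the map that forgets the lower component, and for $e$ the map that adjoins the top element $\Top$ to a finite generator. Concretely, I would define
\[
        r\colon \CC(X)\to\UU(X),\quad r(\widehat F)=\upar F,
        \qquad
        e\colon \UU(X)\to\CC(X),\quad e(\upar F)=\widehat{F\cup\{\Top\}} .
\]

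Well-definedness of $r$ is immediate, since $\widehat F$ determines $\upar F$. For $e$, note two facts: $\down(F\cup\{\Top\})=X$, and $\upar(F\cup\{\Top\})=\upar F$ because $\Top\in\upar F$. Hence $e(\upar F)=(X,\upar F)$, which depends only on $\upar F$, so $e$ is well defined. This description also gives $r\circ e(\upar F)=\upar(F\cup\{\Top\})=\upar F$, so $r\circ e=\id_{\UU(X)}$.

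Both maps are monotone. For $r$, $\widehat F_1\leq_C\widehat F_2$ entails $\upar F_2\subseteq\upar F_1$. For $e$, the lower components of $e(\upar F)$ are all $X$, so only reverse inclusion of the upper components matters, which is exactly $\leq_U$. Consequently both maps send directed sets to directed sets.

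Continuity will be proved with Proposition~\ref{prop:conv-continuity-criterion}, applied to the convergence classes $\Rightarrow_C$ and $\Rightarrow_U$. These determine the respective topologies by Definition~\ref{def:upper-convergence} and Definition~\ref{def:convex-convergence}. Moreover, each convergence $\mathcal E\Rightarrow\cdot$ implies topological convergence in the determined topology, directly from the definition of openness.

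For $r$, I will show that $\mathcal D\Rightarrow_C\widehat F$ implies $r(\mathcal D)\Rightarrow_U\upar F$, using the same witnesses $D_1,\ldots,D_k$.
\begin{itemize}
    \item Clauses (ii) and (iii) of $\Rightarrow_C$ are exactly clauses (i) and (ii) of $\Rightarrow_U$.
    \item Clause (iv) gives, for each tuple $(d_1,\ldots,d_k)$, some $\widehat G\in\mathcal D$ with $\upar G\subseteq\bigcup_i\upar d_i$. Then $\upar G=r(\widehat G)\in r(\mathcal D)$, which is clause (iii) of $\Rightarrow_U$.
    \item The lower clause (i) of $\Rightarrow_C$ is simply discarded.
\end{itemize}

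For $e$, suppose $\mathcal E\Rightarrow_U\upar F$ is witnessed by $D_1,\ldots,D_k$. I will show $e(\mathcal E)\Rightarrow_C\widehat{F\cup\{\Top\}}$ with witnesses $D_1,\ldots,D_k,D_{k+1}$, where $D_{k+1}=\{\Top\}$.
\begin{itemize}
    \item Clause (i) is trivial, because every member of $e(\mathcal E)$ has lower component $X$.
    \item Clause (ii) holds for $i\leq k$ since $F\cap\Lim(D_i)\neq\varnothing$, and for $i=k+1$ since $\Top\in\Lim(\{\Top\})$.
    \item Clause (iii) follows because $F\subseteq\bigcup_{i\leq k}\Lim(D_i)$ and $\Top\in\Lim(D_{k+1})$.
    \item For clause (iv), take a tuple $(d_1,\ldots,d_k,\Top)$ and pick $\upar G\in\mathcal E$ with $\upar G\subseteq\bigcup_{i\leq k}\upar d_i$. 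Then the upper component of $e(\upar G)$ is $\upar(G\cup\{\Top\})=\upar G$, which lies inside $\bigcup_{i\leq k}\upar d_i\cup\upar\Top$.
\end{itemize}

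With these two convergence implications, Proposition~\ref{prop:conv-continuity-criterion} yields continuity of $r$ and $e$. I expect no serious obstacle. The only delicate point is the choice of $e$: the naive embedding $\upar F\mapsto\widehat F$ is not well defined, because different generators with the same upper set have different lower sets. It would also fail clause (i) of $\Rightarrow_C$. Adjoining $\Top$ forces the lower component to be all of $X$, and this removes both problems at once.
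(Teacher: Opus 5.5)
Your proposal is correct and follows the paper's proof essentially step for step. It uses the same maps $r(\widehat F)=\upar F$ and $e(\upar F)=\widehat{F\cup\{\Top\}}$, proves well-definedness and $r\circ e=\id$ from $\down(F\cup\{\Top\})=X$ and $\upar(F\cup\{\Top\})=\upar F$, and gets continuity from Proposition~\ref{prop:conv-continuity-criterion} by reusing the witnesses for $r$ and adjoining the singleton witness $\{\Top\}$ for $e$. Your explicit remark that monotonicity makes $r(\mathcal D)$ and $e(\mathcal E)$ directed, so that the convergence relations apply to them, is a small detail the paper leaves implicit.
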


\begin{proof}
Define
\[
        e\colon \UU(X)\to\CC(X),
        \qquad
        e(\upar F)=\widehat{F\cup\{\Top\}},
\]
and
\[
        r\colon \CC(X)\to\UU(X),
        \qquad
        r(\widehat F)=\upar F.
\]
We first show that $e$ and $r$ are well-defined. For $r$, we need only show that if two nonempty finite subsets $F_1$ and $F_2$ are such that $\widehat F_1=\widehat F_2$, then $r(\widehat F_1)=r(\widehat F_2)$, that is, $\upar F_1=\upar F_2$. This is immediate. For $e$, we need to show that if two nonempty finite subsets $F_1$ and $F_2$ are such that $\upar F_1=\upar F_2$, then $e(\upar F_1)=e(\upar F_2)$, that is, $\widehat{F_1\cup\{\Top\}}=\widehat{F_2\cup\{\Top\}}$. This is also true, since $\down(F_1\cup\{\Top\})=\down(F_2\cup\{\Top\})=X$ and $\upar(F_1\cup\{\Top\}) = \upar F_1 = \upar F_2 = \upar(F_2\cup\{\Top\})$.

We next prove that $r$ is continuous.  It is monotone: if $\widehat F_1\leq_C\widehat F_2$, then $\upar F_2\subseteq\upar F_1$, hence $r(\widehat F_1)\leq_U r(\widehat F_2)$.  Suppose $\mathcal D\Rightarrow_C\widehat F$ in $\CC(X)$, witnessed by $D_1,\ldots,D_k$.  The last three clauses of Definition~\ref{def:convex-convergence} are exactly the upper convergence clauses of Definition~\ref{def:upper-convergence} for $r(\mathcal D)\Rightarrow_U\upar F$.  Thus $r$ preserves the specified convergence, and Proposition~\ref{prop:conv-continuity-criterion} implies that $r$ is continuous.

Next, we prove that $e$ is continuous.  It is monotone: if $\upar F_1\leq_U\upar F_2$, then $\upar F_2\subseteq\upar F_1$, while
\[
        \down(F_1\cup\{\Top\})=\down(F_2\cup\{\Top\})=X,
\]
so $e(\upar F_1)\leq_C e(\upar F_2)$.  Suppose $\mathcal E\Rightarrow_U\upar F$, witnessed by $D_1,\ldots,D_k$.  Adjoin the singleton directed set $D_{k+1}=\{\Top\}$.  Since every member of $e(\mathcal E)$ has first coordinate $X$, all these directed sets lie in the union of the first coordinates of $e(\mathcal E)$.  The limit-covering clauses hold for $F\cup\{\Top\}$ by the upper convergence of $\mathcal E$ and by $\Top\in\Lim(\{\Top\})$.  For every $(d_1,\ldots,d_k,\Top)$, the upper-convergence condition provides $\upar G\in\mathcal E$ with
\[
        \upar G\subseteq\bigcup_{i=1}^k\upar d_i
        \subseteq\bigcup_{i=1}^k\upar d_i\cup\upar\Top .
\]
But
\[
        \upar(G\cup\{\Top\})=\upar G,
\]
because $\Top\in\upar G$.  Hence $e(\mathcal E)\Rightarrow_C\widehat{F\cup\{\Top\}}$.  By Proposition~\ref{prop:conv-continuity-criterion}, $e$ is continuous.

Finally,
\[
        (r\circ e)(\upar F)
        =r(\widehat{F\cup\{\Top\}})
        =\upar(F\cup\{\Top\})
        =\upar F,
\]
again because $\Top\in\upar F$.  Therefore $r\circ e=\id_{\UU(X)}$, and $\UU(X)$ is a retract of $\CC(X)$ in $\Dtop$.
\end{proof}

The following theorem is the lower-power analogue of Theorem~\ref{the:directed-retract}.  Its proof is similar, and we only give a sketch.

\begin{theor}[Lower retraction]\label{the:directed-lower-retract}
Let $X$ be a directed space possessing a least element $\Bot$.  Then $\LL(X)$ is a retract of $\CC(X)$ in $\Dtop$.
\end{theor}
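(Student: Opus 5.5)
The plan is to dualize the proof of Theorem~\ref{the:directed-retract}. Define
\[
        e\colon \LL(X)\to\CC(X),\qquad e(\down F)=\widehat{F\cup\{\Bot\}},
\]
and
\[
        r\colon \CC(X)\to\LL(X),\qquad r(\widehat F)=\down F .
\]
Both maps are well defined. For $r$ this is immediate from the definition of $\widehat F$. For $e$, note that $\upar(F\cup\{\Bot\})=X$ for every $F$ and $\down(F\cup\{\Bot\})=\down F$, since $\Bot\in\down F$. Hence $\down F_1=\down F_2$ forces $\widehat{F_1\cup\{\Bot\}}=\widehat{F_2\cup\{\Bot\}}$. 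The same two identities give $r\circ e(\down F)=\down(F\cup\{\Bot\})=\down F$, so $r\circ e=\id_{\LL(X)}$. Monotonicity of both maps is read off directly from $\leq_C$ and $\leq_L$.

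For continuity of $r$ I use Proposition~\ref{prop:conv-continuity-criterion}. Suppose $\mathcal D\Rightarrow_C\widehat F$, witnessed by $D_1,\ldots,D_k$. For each $a\in F$, clause (iii) gives an index $i$ with $a\in\Lim(D_i)$. Clause (i) gives $D_i\subseteq\bigcup\{\down G:\widehat G\in\mathcal D\}=\bigcup r(\mathcal D)$. So $D_a:=D_i$ witnesses the condition in Definition~\ref{def:lower-convergence}, and $r(\mathcal D)\Rightarrow_L\down F$.

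For continuity of $e$, suppose $\mathcal A\Rightarrow_L\down F$, witnessed by $D_a$ for $a\in F$. I take as $\Rightarrow_C$-witnesses the family $\{D_a:a\in F\}$ together with the singleton $\{\Bot\}$, and check the clauses for $e(\mathcal A)\Rightarrow_C\widehat{F\cup\{\Bot\}}$:
\begin{enumerate}[label=(\roman*)]
    \item Each $D_a$ lies in $\bigcup\mathcal A$, which equals the union of the lower components of $e(\mathcal A)$ because $\down(G\cup\{\Bot\})=\down G$. The singleton $\{\Bot\}$ lies there as well, since $\Bot$ is least.
    \item Each witness has a limit in $F\cup\{\Bot\}$: $a\in\Lim(D_a)$ and $\Bot\in\Lim(\{\Bot\})$.
    \item Clause (iii) follows for the same reason.
    \item For any tuple of choices, its union of principal upsets contains $\upar\Bot=X$. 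This equals $\upar(G\cup\{\Bot\})$ for any member of $\mathcal A$, and $\mathcal A$ is nonempty since it is directed.
\end{enumerate}
Proposition~\ref{prop:conv-continuity-criterion} then gives continuity of $e$.

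The only point needing some care is clause (i) for the adjoined singleton, together with the identification of the union of the lower components. Both are dispatched by $\Bot\in\down G$ for every nonempty $G$. This is where the least element is used, just as $\Top$ was used in Theorem~\ref{the:directed-retract}. Otherwise the argument is a routine verification, so I expect no genuine obstacle.
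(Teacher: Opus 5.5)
Your proposal is correct and follows the paper's approach. The paper uses the same maps, $e_L(\down F)=\widehat{F\cup\{\Bot\}}$ and $r_L(\widehat F)=\down F$, and only states that the verification is analogous to the upper case; you carry out that verification by checking the $\Rightarrow_L$ and $\Rightarrow_C$ clauses, adjoining the singleton witness $\{\Bot\}$, and applying Proposition~\ref{prop:conv-continuity-criterion} (with $r(\mathcal D)$ and $e(\mathcal A)$ directed by monotonicity), and every step checks out.
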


\begin{proof}
Define
\[
        e_L\colon \LL(X)\to\CC(X),
        \qquad
        e_L(\down F)=\widehat{F\cup\{\Bot\}},
\]
and
\[
        r_L\colon \CC(X)\to\LL(X),
        \qquad
        r_L(\widehat F)=\down F .
\]
Analogously to the directed upper powerspace case, both maps are well-defined and continuous, and $r_L \circ e_L = \id_{\LL(X)}$. Therefore, $\LL(X)$ is a retract of $\CC(X)$ in $\Dtop$.

\end{proof}

\begin{ex}\label{ex:no-top}
        In Theorem~\ref{the:directed-retract}, the hypothesis that $X$ has a greatest element is essential.  Consider
\[
        X=\{a,b\}
\]
equipped with the two-point discrete topology and the equality specialization order.  Then
\[
        C(X)=\{\widehat{\{a\}},\widehat{\{b\}},\widehat{\{a,b\}}\}.
\]
Since $\down F=\upar F=F$ for every $F\subseteq X$, the order $\leq_C$ on $C(X)$ is equality.  Hence $\CC(X)$ is a three-point discrete directed space.

On the other hand,
\[
        U(X)=\{\upar\{a\},\upar\{b\},\upar\{a,b\}\}
        =\{\{a\},\{b\},\{a,b\}\},
\]
ordered by reverse inclusion.  Thus $\{a,b\}\leq_U\{a\}$ and $\{a,b\}\leq_U\{b\}$, while $\{a\}$ and $\{b\}$ are incomparable.  Its topology is the upper-set topology for this finite directed space, so the singleton $\{\{a,b\}\}$ is not open.  In particular, $\UU(X)$ is not discrete.

\begin{center}
\begin{minipage}{0.82\textwidth}
\centering
\begin{tikzpicture}[
    scale=0.95,
    every node/.style={font=\small},
    point/.style={circle, draw=#1!70!black, fill=#1!10, thick, minimum size=8mm, inner sep=1.5pt},
    title/.style={font=\small\bfseries},
    edge/.style={line width=0.8pt, draw=black!55}
]
    \coordinate (uabp) at (0,0);
    \coordinate (uap) at (-1.2,1.45);
    \coordinate (ubp) at (1.2,1.45);
    \draw[edge] (uabp) -- (uap);
    \draw[edge] (uabp) -- (ubp);
    \node[title] at (0,2.65) {$\UU(X)$};
    \node[point=blue] (uab) at (uabp) {$\upar\{a,b\}$};
    \node[point=blue] (ua) at (uap) {$\upar a$};
    \node[point=blue] (ub) at (ubp) {$\upar b$};
    \node[font=\scriptsize, text=black!60] at (0,-1) {reverse inclusion};

    \node[title] at (5.1,2.65) {$\CC(X)$};
    \node[point=teal] at (3.75,0.25) {$\widehat{\{a\}}$};
    \node[point=teal] at (6.45,0.25) {$\widehat{\{b\}}$};
    \node[point=teal] at (5.1,1.55) {$\widehat{\{a,b\}}$};
    \node[font=\scriptsize, text=black!60] at (5.1,-1) {discrete specialization order};
\end{tikzpicture}

\captionof{figure}{Without a greatest element, $\UU(X)$ is a non-discrete three-point $V$, while $\CC(X)$ is discrete.}
\end{minipage}
\end{center}

Suppose that $\UU(X)$ were a retract of $\CC(X)$.  Then there would exist continuous maps
\[
        e\colon \UU(X)\to\CC(X),\qquad r\colon \CC(X)\to\UU(X)
\]
with $r\circ e=\id_{\UU(X)}$.  The map $e$ is injective, and both spaces have three points; hence $e$ is bijective and $r=e^{-1}$.  Since both $e$ and $r$ are continuous, this would make $\UU(X)$ homeomorphic to $\CC(X)$, contradicting the fact that $\CC(X)$ is discrete and $\UU(X)$ is not.  Thus the retract theorem fails for this space without a top element.

If one adjoins a greatest element, the obstruction disappears.  Let
\[
        X^\Top=\{a,b,\Top\}
\]
with $a,b\leq\Top$ and $a,b$ incomparable.  Then Theorem~\ref{the:directed-retract} applies.  Explicitly,
\[
        e(\upar F)=\widehat{F\cup\{\Top\}},
        \qquad
        r(\widehat F)=\upar F.
\]
The picture below shows how the four-point upper powerspace becomes the highlighted four-point retract inside the seven-point convex powerspace.

\begin{center}
\begin{minipage}{0.9\textwidth}
\centering
\begin{tikzpicture}[
    scale=0.92,
    every node/.style={font=\small},
    upnode/.style={circle, draw=blue!70!black, fill=blue!10, thick, minimum size=8mm, inner sep=1.5pt},
    cnode/.style={circle, draw=black!45, fill=black!4, thick, minimum size=8mm, inner sep=1.5pt},
    imnode/.style={circle, draw=teal!65!black, fill=teal!12, very thick, minimum size=8mm, inner sep=1.5pt},
    title/.style={font=\small\bfseries},
    edge/.style={line width=0.8pt, draw=black!55},
    imedge/.style={line width=1.2pt, draw=teal!65!black}
]
    \coordinate (uxp) at (0,0);
    \coordinate (ua2p) at (-1.25,1.45);
    \coordinate (ub2p) at (1.25,1.45);
    \coordinate (utp) at (0,2.9);
    \coordinate (cabp) at (6.1,-0.3);
    \coordinate (cap) at (4.55,0.75);
    \coordinate (cbp) at (7.65,0.75);
    \coordinate (cabtp) at (6.1,1.45);
    \coordinate (catp) at (4.75,2.45);
    \coordinate (cbtp) at (7.45,2.45);
    \coordinate (ctp) at (6.1,3.35);

    \draw[edge] (uxp) -- (ua2p) -- (utp);
    \draw[edge] (uxp) -- (ub2p) -- (utp);
    \draw[edge] (cabp) -- (cabtp);
    \draw[edge] (cap) -- (catp);
    \draw[edge] (cbp) -- (cbtp);
    \draw[imedge] (cabtp) -- (catp) -- (ctp);
    \draw[imedge] (cabtp) -- (cbtp) -- (ctp);

    \node[title] at (0,4.15) {$\UU(X^\Top)$};
    \node[upnode] (ux) at (uxp) {$X^\Top$};
    \node[upnode] (ua2) at (ua2p) {$\upar a$};
    \node[upnode] (ub2) at (ub2p) {$\upar b$};
    \node[upnode] (ut) at (utp) {$\upar\Top$};

    \node[title] at (6.1,4.15) {$\CC(X^\Top)$};
    \node[cnode] (cab) at (cabp) {$\widehat{\{a,b\}}$};
    \node[cnode] (ca) at (cap) {$\widehat{\{a\}}$};
    \node[cnode] (cb) at (cbp) {$\widehat{\{b\}}$};
    \node[imnode] (cabt) at (cabtp) {$\widehat{X^\Top}$};
    \node[imnode] (cat) at (catp) {$\widehat{\{a,\Top\}}$};
    \node[imnode] (cbt) at (cbtp) {$\widehat{\{b,\Top\}}$};
    \node[imnode] (ct) at (ctp) {$\widehat{\{\Top\}}$};

    \draw[-{Stealth[length=2.2mm]}, thick, draw=teal!65!black] (1.65,1.45) -- node[above, font=\scriptsize] {$e$} (4.05,2.25);
\end{tikzpicture}

\captionof{figure}{After adjoining $\Top$, the image of $e(\upar F)=\widehat{F\cup\{\Top\}}$ is the highlighted diamond in $\CC(X^\Top)$.}
\end{minipage}
\end{center}

For example,
\[
        (r\circ e)(\upar\{a,b\})
        =r(\widehat{\{a,b,\Top\}})
        =\upar\{a,b,\Top\}
        =\upar\{a,b\},
\]
and the same computation works for every nonempty finite $F\subseteq X^\Top$.  Hence adjoining $\Top$ is precisely what makes the canonical retraction possible.
\end{ex}

We know that quasicontinuity is preserved by continuous retracts of dcpos \cite[Proposition~4.2.6]{GierzEtAl2003}.  The following lemma extends this result to directed spaces.

\begin{lema}\label{lema:qc-retract-directed}
Quasicontinuity is inherited by continuous retracts of directed spaces.
\end{lema}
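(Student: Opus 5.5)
We prove the statement directly from the open-set formulation of quasicontinuity in Definition~\ref{def:directed-continuity}. Let $Y$ be quasicontinuous, let $X$ be a directed space, and let $e\colon X\to Y$ and $r\colon Y\to X$ be continuous with $r\circ e=\id_X$. Fix an open set $U\subseteq X$ and a point $x\in U$. The goal is a finite set $F\subseteq U$ with $x\in\intr(\upar F)\subseteq U$.

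The idea is to pull $U$ back along $r$ and use quasicontinuity upstairs. The set $r^{-1}(U)$ is open in $Y$, and it contains $e(x)$ because $r(e(x))=x\in U$. Quasicontinuity of $Y$ gives a finite set $G\subseteq r^{-1}(U)$ with
\[
        e(x)\in\intr(\upar G)\subseteq r^{-1}(U).
\]
We then push down by setting $F=r(G)$. This set is finite, and $F\subseteq U$ since $G\subseteq r^{-1}(U)$. Continuous maps are monotone for the specialization orders, so $r(\upar G)\subseteq\upar r(G)=\upar F$.

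Next we produce an open neighbourhood of $x$ inside $\upar F$. Put $W=e^{-1}(\intr(\upar G))$. By continuity of $e$, $W$ is open in $X$, and $x\in W$. For $z\in W$ we have $e(z)\in\upar G$, so
\[
        z=r(e(z))\in r(\upar G)\subseteq\upar F.
\]
Hence $x\in W\subseteq\intr(\upar F)$.

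It remains to show $\intr(\upar F)\subseteq U$. Since $U$ is open, it is an upper set in the specialization order, and $F\subseteq U$ gives $\upar F\subseteq U$. This finishes the argument.

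The only step needing any care is the observation that continuous maps between $T_0$ spaces are monotone for the specialization orders, which gives $r(\upar G)\subseteq\upar r(G)$. This is standard. No directed-convergence arguments are needed, because the open-set formulation of quasicontinuity in Definition~\ref{def:directed-continuity} is purely topological.
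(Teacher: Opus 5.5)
Your proposal is correct and is essentially the paper's proof. Both pull $U$ back along $r$, apply quasicontinuity at $e(x)$, push the finite set forward by $r$, and use $e^{-1}$ of the interior together with $r\circ e=\id$ and monotonicity of $r$ to get an open neighbourhood of $x$ inside $\upar r(G)\subseteq U$.
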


\begin{proof}
Let $E$ be a retract of a quasicontinuous directed space $D$, with continuous maps $e\colon E\to D$ and $r\colon D\to E$ satisfying $r\circ e=\id_E$.  We use the local finite-neighbourhood characterization of quasicontinuity for directed spaces: for every point $x$ and open neighbourhood $U$ of $x$, there is a finite set $F\subseteq U$ such that $x\in\intr(\upar F)$.  Let $y\in U$, where $U$ is open in $E$.  Then $W=r^{-1}(U)$ is open in $D$, and $e(y)\in W$.  By quasicontinuity of $D$, there exists a finite $F\subseteq W$ such that
\[
        e(y)\in\intr_D(\upar F).
\]
Set $G=r(F)$.  Then $G\subseteq U$ is finite.  Moreover
\[
        V=e^{-1}\!\left(\intr_D(\upar F)\right)
\]
is an open neighbourhood of $y$ in $E$.  If $z\in V$, then $f\leq e(z)$ for some $f\in F$, hence $r(f)\leq z$.  Thus $V\subseteq\upar G$, so
\[
        y\in\intr_E(\upar G)
        \quad\text{and}\quad
        G\subseteq U.
\]
Since open sets are upper with respect to the specialization order, $\upar G\subseteq U$.
This establishes quasicontinuity of $E$.
\end{proof}

By combining the upper-powerdomain counterexample of \cite[Example~4.13]{ChenKouLyu2024} with the retract theorem, we obtain the following result.

\begin{theor}\label{the:main}
There exists a quasicontinuous dcpo $X$ such that $\PC(X)$ is not quasicontinuous.
\end{theor}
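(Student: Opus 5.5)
The plan is to transfer the known upper-powerdomain counterexample through the retraction of Theorem~\ref{the:directed-retract}. By \cite[Example~4.13]{ChenKouLyu2024} there is a quasicontinuous dcpo $L$ for which $\PU(L)$ is not quasicontinuous. By Remark~\ref{the:qc-completion}, this is equivalent to saying that the directed upper powerspace $\UU(\Sigma L)$ is not quasicontinuous. We take $X=L$, or a modification of $L$ if needed, and argue by contradiction: suppose $\PC(L)$ were quasicontinuous. Then Remark~\ref{the:qc-completion} gives that $\CC(\Sigma L)$ is quasicontinuous.

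Next we want $\UU(\Sigma L)$ to be a retract of $\CC(\Sigma L)$. This requires $\Sigma L$ to have a greatest element.

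\begin{itemize}
\item If the dcpo of \cite[Example~4.13]{ChenKouLyu2024} already has a top, Theorem~\ref{the:directed-retract} applies directly. In that case Lemma~\ref{lema:qc-retract-directed} shows that $\UU(\Sigma L)$ is quasicontinuous, which is a contradiction.
\item Otherwise we pass to $L^\Top=L\cup\{\Top\}$, adjoining a new greatest element. This is the step I expect to require the most care.
\end{itemize}

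For $L^\Top$ we must check three things.

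\begin{enumerate}
\item $L^\Top$ is still a quasicontinuous dcpo. Directed sups are unchanged except for sets containing $\Top$. The Scott-open sets of $L^\Top$ are exactly $V\cup\{\Top\}$ for $V$ Scott open in $L$, together with $\{\Top\}$ if $\{\Top\}$ is open. For a neighbourhood of $\Top$ one may take $F=\{\Top\}$, since $\upar\Top$ is contained in every neighbourhood. For points of $L$, the finite sets witnessing quasicontinuity in $L$ still work, because $\intr(\upar F)\cup\{\Top\}$ is open.
\item $\PU(L^\Top)$ is still not quasicontinuous.
\item The retract argument then applies to $\Sigma L^\Top$.
\end{enumerate}

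For the second point, the cleanest route is Theorem~\ref{the:upper-equivalences}. It suffices to show that the topology of $\UU(\Sigma L^\Top)$ is not the Scott topology of its specialization order whenever this fails for $\UU(\Sigma L)$. Alternatively, we can show directly that $\UU(\Sigma L)$ is a retract of $\UU(\Sigma L^\Top)$, or even an open subspace of it.

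\begin{itemize}
\item The inclusion $\upar_L F\mapsto \upar_{L^\Top}F$ is continuous. Convergence witnesses in $L$ remain witnesses in $L^\Top$, since directed limits in $L$ stay limits in $L^\Top$.
\item Its image $\{\upar F: F\subseteq L\}$ is open, because it is an upper set for $\leq_U$ not containing $\{\Top\}$. One checks that no $\Rightarrow_U$-convergence into it can come from the single extra point $\{\Top\}$.
\item Quasicontinuity passes to open subspaces, since the finite sets can be chosen inside the open set.
\end{itemize}

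Hence $\UU(\Sigma L^\Top)$ is not quasicontinuous. Then Theorem~\ref{the:directed-retract} and Lemma~\ref{lema:qc-retract-directed} show that $\CC(\Sigma L^\Top)$ is not quasicontinuous, and Remark~\ref{the:qc-completion} gives that $\PC(L^\Top)$ is not quasicontinuous. So $L^\Top$ is the required dcpo.

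The main obstacle is verifying the open-subspace/retract claim for $\UU(\Sigma L)\subseteq\UU(\Sigma L^\Top)$ against the $\Rightarrow_U$ convergence of Definition~\ref{def:upper-convergence}. I would first check whether the example in \cite{ChenKouLyu2024} already has a top element, which avoids this step entirely.
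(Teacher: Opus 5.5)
Your first branch is the paper's proof. The dcpo used there is $X=2^{<\omega}\cup\{\Top\}$, the binary tree with a top adjoined. By \cite[Example~4.13]{ChenKouLyu2024} it is quasicontinuous and $\UU(\Sigma X)$ is not quasicontinuous. Since $X$ already has a greatest element, Theorem~\ref{the:directed-retract}, Lemma~\ref{lema:qc-retract-directed} and Remark~\ref{the:qc-completion} finish the argument exactly as you describe. So your case split resolves into the first case, and the adjoin-a-top detour is not needed.

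Do not keep the fallback as written, because one step in it is false.
\begin{itemize}
\item \emph{The image is not open.} In $\UU(\Sigma L^{\Top})$ the point $\upar\Top=\{\Top\}$ is the $\leq_U$-greatest element, so every nonempty open set contains it. The image $\{\upar F:F\subseteq L\}$, which omits it, cannot be an open upper set. It is the complement of the singleton $\{\{\Top\}\}$, which is open because $\{\Top\}$ is Scott open in $L^{\Top}$: directed subsets of $L$ have their suprema in $L$. So the image is a closed lower set, and ``quasicontinuity passes to open subspaces'' does not apply.
\item \emph{No retraction exists when $L$ has no top.} A retraction $r$ is monotone, so $r(\{\Top\})$ would have to be a greatest element of $U(\Sigma L)$, and that forces $L$ to have a top.
\end{itemize}

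The fallback can be repaired directly, in two steps.
\begin{itemize}
\item \emph{Openness of the extended set.} For $\mathcal V$ open in $\UU(\Sigma L)$, the set $\iota(\mathcal V)\cup\{\{\Top\}\}$ is open in $\UU(\Sigma L^{\Top})$. Instantiate every witness $D_i$ containing $\Top$ at $d_i=\Top$. Either this forces $\{\Top\}\in\mathcal E$, or the remaining witnesses lie in $L$. In the latter case they give $\iota^{-1}(\mathcal E)\Rightarrow_U\upar F''$ in $\UU(\Sigma L)$ for some $\varnothing\neq F''\subseteq F\cap L$, and $\upar F''\in\mathcal V$ because $\mathcal V$ is an upper set.
\item \emph{Transfer of the finite witness.} Take a finite witness $\mathcal S$ of quasicontinuity in $\UU(\Sigma L^{\Top})$. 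You may discard $\{\Top\}$ from it, since no point of the image lies above it. Pulling $\operatorname{int}(\upar\mathcal S)$ back along the continuous inclusion $\iota$ then gives quasicontinuity of $\UU(\Sigma L)$.
\end{itemize}
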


\begin{proof}
Let
\[
        X=2^{<\omega}\cup\{\Top\},
\]
where $2^{<\omega}$ denotes the prefix-ordered infinite binary tree and $\Top$ is a greatest element.  By the upper-powerdomain counterexample of \cite[Example~4.13]{ChenKouLyu2024}, $X$ is a quasicontinuous dcpo and the directed upper powerspace $\UU(\Sigma X)$ is not quasicontinuous.  Equivalently, by Theorem~\ref{the:qc-completion}, the upper powerdomain $\PU(X)$ is not quasicontinuous.

Since $X$ has a greatest element, Theorem~\ref{the:directed-retract} says that $\UU(\Sigma X)$ is a continuous retract of $\CC(\Sigma X)$ in $\Dtop$.  If $\CC(\Sigma X)$ were quasicontinuous, Lemma~\ref{lema:qc-retract-directed} would imply that $\UU(\Sigma X)$ is quasicontinuous, a contradiction.  Hence $\CC(\Sigma X)$ is not quasicontinuous.  Applying Theorem~\ref{the:qc-completion} once more, $\PC(X)$ is not quasicontinuous.
\end{proof}

\begin{corollar}
The convex powerdomain construction, equivalently the free dcpo-semilattice construction in $\DCPO$, does not preserve quasicontinuity.
\end{corollar}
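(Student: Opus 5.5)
This is a direct consequence of Theorem~\ref{the:main}, so the plan is to unpack what ``preserve quasicontinuity'' means and feed in that counterexample. A construction $L\mapsto\PC(L)$ preserves quasicontinuity if $\PC(L)$ is quasicontinuous whenever $L$ is. By definition of powerdomains in $\DCPO$, $\PC(L)$ is the free dcpo-semilattice over $L$. So the convex powerdomain construction and the free dcpo-semilattice construction are literally the same functor up to natural isomorphism, and it suffices to refute preservation for one of them.

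Take the dcpo $X=2^{<\omega}\cup\{\Top\}$ from the proof of Theorem~\ref{the:main}. That proof supplies the two facts needed:
\begin{itemize}
\item $X$ is quasicontinuous, which is imported from \cite[Example~4.13]{ChenKouLyu2024};
\item $\PC(X)$ is not quasicontinuous, which was obtained by going through the directed level.
\end{itemize}
For the second fact, the chain of reasoning is as follows. $\UU(\Sigma X)$ fails quasicontinuity. It is a retract of $\CC(\Sigma X)$ by Theorem~\ref{the:directed-retract}, since $X$ has a top. Lemma~\ref{lema:qc-retract-directed} then shows that $\CC(\Sigma X)$ is not quasicontinuous. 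Finally, Remark~\ref{the:qc-completion} transfers the failure to the Scott completion, which is $\PC(X)$ by Theorem~\ref{the:scott-completion}.

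Thus $X$ is a quasicontinuous dcpo whose image under the construction is not quasicontinuous, which is exactly the negation of preservation.

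The only point that needs care is identifying the free dcpo-semilattice with $\PC(L)$. Quasicontinuity is invariant under order isomorphism of dcpos, because it is defined via the Scott topology, which is determined by the order. Any two free objects are isomorphic, so the choice of model is immaterial. With that observation the corollary follows, and I expect no genuine obstacle beyond what is already settled in Theorem~\ref{the:main}.
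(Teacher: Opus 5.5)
Your proposal is correct and follows the paper's own reasoning. The corollary is stated as an immediate consequence of Theorem~\ref{the:main}, using the same witness $X=2^{<\omega}\cup\{\Top\}$ and the same chain: $\UU(\Sigma X)$ is not quasicontinuous, it is a retract of $\CC(\Sigma X)$ by Theorem~\ref{the:directed-retract}, Lemma~\ref{lema:qc-retract-directed} then applies, and the failure transfers through Scott completion. The identification of $\PC(L)$ with the free dcpo-semilattice holds by definition, so your remark on isomorphism-invariance of quasicontinuity is a correct clarification of the ``equivalently'' clause.
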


\section{Preservation and Reflection Results}

We now collect the remaining preservation and reflection results for continuity and quasicontinuity.  By Theorem~\ref{the:qc-completion}, the directed powerspaces and their Scott-completed powerdomains have the same preservation and reflection behaviour for these two properties.  Thus the comparison table displayed in the Introduction applies simultaneously to the directed lower, upper and convex powerspaces and to the corresponding lower, upper and convex powerdomains.

\subsection{Well-known Results}

\begin{theor}[Lower powerdomains {\cite{AbramskyJung1994,GierzEtAl2003,BattenfeldSchroder2015,ChenKouXie2024RMJ}}]\label{the:lower-pres-ref}
Let $L$ be a dcpo.  The lower powerdomain $\PL(L)$ preserves and reflects both continuity and quasicontinuity:
\[
        L\text{ continuous}
        \Longleftrightarrow
        \PL(L)\text{ continuous},
        \qquad
        L\text{ quasicontinuous}
        \Longleftrightarrow
        \PL(L)\text{ quasicontinuous}.
\]
\end{theor}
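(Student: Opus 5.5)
Because $\PL(L)$ has the concrete representation as the nonempty Scott closed subsets $\Gamma^*(L)$ with inclusion and binary union, the plan is to argue directly on this lattice; since the statement is classical, I would follow the standard arguments of \cite{GierzEtAl2003,AbramskyJung1994}. The proof splits into four implications, and two of them are handled by a retraction.

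\emph{Reflection.} Assume $\PL(L)$ is continuous, respectively quasicontinuous. Consider $\eta\colon L\to\PL(L)$, $x\mapsto\down x$, and $\rho\colon\PL(L)\to L$. I would not use $\rho(A)=\sup A$, since this need not exist. It is cleaner to realise $L$ as a retract of a neighbourhood, or to argue pointwise. Let $U$ be Scott open in $L$ and $x\in U$. The set $\Diamond U=\{A:A\cap U\neq\varnothing\}$ is Scott open in $\Gamma^*(L)$ and contains $\down x$. Quasicontinuity of $\PL(L)$ gives finitely many closed sets $A_1,\dots,A_n$, each meeting $U$, with $\down x\in\intr\upar\{A_1,\dots,A_n\}\subseteq\Diamond U$. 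Pick $a_j\in A_j\cap U$. Pulling back along the Scott continuous map $\eta$ yields an open neighbourhood $W$ of $x$ such that each $y\in W$ has $\down y\supseteq A_j$ for some $j$, hence $a_j\le y$. Thus $x\in\intr\upar\{a_1,\dots,a_n\}\subseteq U$. With $n=1$, the same argument gives reflection of continuity via the $c$-space characterization in Definition~\ref{def:directed-continuity}.

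\emph{Preservation of continuity.} For continuous $L$, the claim is the textbook fact that $\Gamma^*(L)$ is a continuous lattice. Every closed $A$ is the directed union-closure of the sets $\down F$ with $F$ finite and each element of $F$ way below some element of $A$, and these $\down F$ satisfy $\down F\ll A$.

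\emph{Preservation of quasicontinuity.} This is the main obstacle. I would use the fact that $L$ is quasicontinuous iff $\Sigma L$ is a locally finitary compact (hence core compact, locally compact sober) space. The closed-set lattice of a locally compact sober space is the dual of a continuous frame and is a continuous lattice; in fact the Hoare powerspace has the lower Vietoris topology. The key step is to show that the Scott topology on $\Gamma^*(L)$ coincides with the lower Vietoris topology generated by the sets $\Diamond U$. Given this, I would take a Scott open $\mathcal V\ni A$. I would reduce to a basic neighbourhood $\Diamond U_1\cap\dots\cap\Diamond U_m$ with points $x_i\in A\cap U_i$. Quasicontinuity of $L$ then gives finite sets $F_i\subseteq U_i$ with $x_i\in\intr\upar F_i$. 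The finite family $\{\down\{f_1,\dots,f_m\}:f_i\in F_i\}$ witnesses the quasicontinuity condition at $A$. The delicate part is proving that the Scott and lower Vietoris topologies agree; here I would cite \cite{BattenfeldSchroder2015,ChenKouXie2024RMJ}.
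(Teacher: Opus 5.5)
The paper gives no proof of this theorem; it is imported from the cited literature, so there is no in-paper argument to compare with. Judged on its own, your outline is a valid proof. It contains one false side remark, which you should remove.

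Your reflection half is exactly the $\Diamond$-pullback the paper uses for the convex construction in Theorems~\ref{the:convex-qc-reflect} and~\ref{the:continuity-reflection}, transplanted to $\Gamma^*(L)$. It is complete as written. The continuity-preservation half is the standard argument. Strictly, $\Gamma^*(L)$ is a continuous dcpo rather than a lattice, since $\varnothing$ is removed.

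The problem is the sentence asserting that the closed-set lattice of a locally compact sober space is a continuous lattice. That is false. For the two-chain dcpo $Y$ of Example~\ref{ex:two-chain}, $\Sigma Y$ is locally compact and sober. Yet in $\Gamma(Y)$ both chains $\down a_n$ and $\down b_n$ have supremum $Y$, because the Scott closure of each union contains $\omega$. Hence $C\ll Y$ forces $C\subseteq\down a_n\cap\down b_k=\varnothing$. Indeed, if your claim held, every quasicontinuous $L$ would have continuous $\PL(L)$, contradicting the reflection of continuity you just proved.

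Your final argument never uses this claim, so nothing collapses. But the Scott and lower Vietoris topologies on $\Gamma^*(L)$ coincide for a different reason, and you can prove it directly instead of citing it:
\begin{enumerate}[label=(\arabic*)]
    \item Given a Scott open $\mathcal V\ni A$, note that $A$ is the directed supremum of the sets $\down F$ with $F\subseteq A$ finite and nonempty. So $\down\{x_1,\ldots,x_m\}\in\mathcal V$ for some $x_i\in A$.
    \item The map $L^m\to\Gamma^*(L)$, $(y_1,\ldots,y_m)\mapsto\down\{y_1,\ldots,y_m\}$, is Scott continuous.
    \item Since $\Sigma L$ is locally compact, $\sig(L)$ is a continuous lattice, so $\sig(L^m)$ is the product topology. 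This gives Scott open sets $U_i\ni x_i$ with $U_1\times\cdots\times U_m$ inside the preimage of $\mathcal V$.
    \item If $B\in\Diamond U_1\cap\cdots\cap\Diamond U_m$, pick $y_i\in B\cap U_i$. Then $\down\{y_1,\ldots,y_m\}\in\mathcal V$ and is contained in $B$, so $B\in\mathcal V$. Thus $A\in\Diamond U_1\cap\cdots\cap\Diamond U_m\subseteq\mathcal V$.
\end{enumerate}
Your last step then works with the explicit neighbourhood $\Diamond(\intr\upar F_1)\cap\cdots\cap\Diamond(\intr\upar F_m)$ of $A$. This set is Scott open because each $\Diamond U$ is. It is contained in $\upar\{\down\{f_1,\ldots,f_m\}:f_i\in F_i\}$ because closed sets are lower sets. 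This makes the quasicontinuity-preservation argument self-contained, resting only on the product theorem for Scott topologies of core-compact dcpos.
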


\begin{theor}[Continuity preservation for directed powerspaces and free dcpo-algebras {\cite{AbramskyJung1994,GierzEtAl2003,Koslowski1997,JungMoshierVickers2008,Xie2021,ChenKouXie2024RMJ,ChenKouLyu2024Free}}]\label{the:continuity-preserved}
Let $X$ be a continuous directed space.  Then $\LL(X)$, $\UU(X)$ and $\CC(X)$ are continuous directed spaces.  At the dcpo level, if $L$ is a continuous dcpo, then every free dcpo-algebra over $L$, whenever it exists in the usual dcpo-algebraic setting, is continuous.  In particular,
\[
        L\text{ continuous}
        \Longrightarrow
        \PL(L),\ \PU(L),\ \PC(L)\text{ continuous}.
\]
\end{theor}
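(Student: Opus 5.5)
The plan is to treat the three directed powerspaces separately, establish the directed-space statement for each, and then pass to $\PL(L)$, $\PU(L)$, $\PC(L)$ using Theorem~\ref{the:scott-completion} and Remark~\ref{the:qc-completion}.

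For $\LL(X)$ and $\UU(X)$ I will work with the $c$-space characterization of continuity from Definition~\ref{def:directed-continuity}. Let $U\subseteq\LL(X)$ (resp.\ $\UU(X)$) be open and $\down F\in U$ (resp.\ $\upar F\in U$). I will approximate each $a\in F$ by a directed family of points way-below it, say $d_a\ll_d a$ with $D_a\to a$. This gives directed families in the powerspace converging to $\down F$ (resp.\ $\upar F$) in the sense of $\Rightarrow_L$ (resp.\ $\Rightarrow_U$). Since $U$ is open, some member $\down\{d_a:a\in F\}$ (resp.\ $\upar\{d_a\}$) lies in $U$.

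It then remains to show that this member lies in $\intr\upar$ of itself. I plan to prove this by checking that the set of all $\down G$ such that every $d_a\in\down G$ is $\Rightarrow_L$-open. For the upper case the corresponding set is
\[
\{\upar G: G\subseteq\textstyle\bigcup_a\intr\upar d_a\}.
\]
For this set, the clause (iii) of $\Rightarrow_U$ combined with $d_a\ll_d$ hands us the needed member of $\mathcal E$.

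For $\CC(X)$ I will combine the two approximations. Given $\widehat F\in U$, the family of $\widehat{\{d_a:a\in F\}}$ with $d_a\ll_d a$ should satisfy $\Rightarrow_C\widehat F$, using witnesses $D_a$ for $a\in F$. Clauses (i)--(iii) are immediate. For clause (iv), given a tuple $(d_a)$, I take the member $\widehat{\{d_a'\}}$ with each $d_a'\ge d_a$, chosen by directedness; this member satisfies $\upar\{d'_a\}\subseteq\bigcup\upar d_a$. Hence some $\widehat{G_0}$ with $G_0=\{d_a\}$ lies in $U$, and I then need
\[
\widehat{G_0}\in\intr\{\widehat H:\widehat{G_0}\leq_C\widehat H\}.
\]
The candidate open set is
\[
\Big\{\widehat H:\ G_0\subseteq\down H,\ H\subseteq\textstyle\bigcup_a\intr\upar d_a\Big\}.
\]

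Checking that this candidate is $\Rightarrow_C$-open is the main obstacle. For the lower part, given $\mathcal D\Rightarrow_C\widehat F'$ with $d_a\le f$ for some $f\in F'$, I must use $d_a\ll_d f$ together with clause (i) to land $d_a$ below some $\down G$ with $\widehat G\in\mathcal D$. This requires choosing $d_a$ way-below $a$ with an interpolating element, so that way-below is available; I expect interpolation in continuous directed spaces to be needed here. For the upper part I must ensure a single member $\widehat G\in\mathcal D$ has $G\subseteq\bigcup\intr\upar d_a$. I get this from clause (iv) by selecting $d_i\in D_i\cap\intr\upar d_a$, which is possible because each point of $F'$ lies in the open set $\bigcup_a\intr\upar d_a$. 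Directedness of $\mathcal D$ then combines the lower and upper requirements into one member.

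Finally, for the dcpo statement I will apply these three results to $X=\Sigma L$, using that the Scott space of a continuous dcpo is a continuous directed space. Remark~\ref{the:qc-completion} then transfers continuity to $\PL(L)$, $\PU(L)$ and $\PC(L)$. For general free dcpo-algebras I would appeal to the cited literature~\cite{JungMoshierVickers2008,ChenKouLyu2024Free} rather than reprove it.
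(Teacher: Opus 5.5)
\textbf{Gap: the neighbourhoods you propose in the lower and convex cases are not open.} Your upper case works: it is Proposition~\ref{prop:box-open} applied to $\bigcup_a\intr\upar d_a$. The lower and convex cases fail at exactly the step you call the main obstacle.

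The set $\{\down G: G_0\subseteq\down G\}=\upar_L\down G_0$ is not $\Rightarrow_L$-open in general. In $\LL(\Sigma[0,1])$, take $F=\{1\}$ and $G_0=\{1/2\}$. The family $\{\down\{t\}:t<1/2\}$ satisfies $\Rightarrow_L\down\{1/2\}$, and $\down\{1/2\}$ lies in the set, but no member of the family does.

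Your convex candidate fails in the same way. Take $F=\{1/2,1\}$, $d_1=1/4$, $d_2=3/4$. The candidate is $\{\widehat H:\min H>1/4,\ \max H\geq 3/4\}$. It contains $\widehat F$ and $\widehat{\{3/4\}}$. Yet the chain $\widehat{\{t\}}$, $1/2<t<3/4$, satisfies $\Rightarrow_C\widehat{\{3/4\}}$ (with witness $D_1=(1/2,3/4)$) and never enters the candidate.

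Interpolation cannot repair this. Membership only tells you $d_a\leq h$ for some $h\in H$. No choice of the $d_a$ upgrades an arbitrary $d_a\leq h$ to $d_a\ll_d h$, and that is what you would need to pull $d_a$ into some $\down G$ with $\widehat G\in\mathcal D$ via clause (i).

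\textbf{Repair.} Replace the lower condition by the lower Vietoris condition:
\[
\mathcal V=\bigcap_{a\in F}\Diamond\bigl(\intr\upar d_a\bigr)\ \cap\ \Bigl\{\widehat H: H\subseteq \textstyle\bigcup_{a}\intr\upar d_a\Bigr\}.
\]
\begin{itemize}
\item \emph{Openness.} The first part is open by Proposition~\ref{prop:diamond-open}, or by its evident $\LL$-analogue in the lower case. The second part is open by the clause-(iv) argument you sketch.
\item \emph{Containment.} Every $\widehat H\in\mathcal V$ has, for each $a$, some $h\in H$ with $h\geq d_a$, and also $H\subseteq\upar G_0$. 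Hence $\mathcal V\subseteq\upar_C\widehat{G_0}\subseteq U$.
\item \emph{Where interpolation is needed.} To get $\widehat F\in\mathcal V$, you need $a\in\intr\upar d_a$. The same fact is used silently in your upper case, where you need $F\subseteq\bigcup_a\intr\upar d_a$. It holds because in a continuous directed space $\{y:d\ll_d y\}$ is open. Indeed, if $D\to y$ with $d\ll_d y$, interpolate $d\ll_d z\ll_d y$ and take an element of $D$ above $z$. So interpolation belongs here, not in the lower-coordinate step.
\end{itemize}

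\textbf{Misstated goals.} Your goals ``this member lies in $\intr\upar$ of itself'' and $\widehat{G_0}\in\intr\{\widehat H:\widehat{G_0}\leq_C\widehat H\}$ should read $\down F\in\intr\upar_L\down G_0$ and $\widehat F\in\intr\upar_C\widehat{G_0}$. The versions you wrote are false in general.

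With these changes your argument gives a self-contained proof of the directed-space part. The paper itself states this theorem without proof and defers to the cited literature.
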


\subsection{Reflection of Quasicontinuity via Directed Powerspaces}

We first introduce two kinds of open subsets in directed powerspaces, and then use them to prove the reflection results for quasicontinuity and continuity. 

\begin{prop}[Upper Vietoris opens]\label{prop:box-open}
Let $X$ be a directed space and let $U\subseteq X$ be open.  Then
\[
        \Box U=\{\upar F\in U(X):\upar F\subseteq U\}
\]
is open in the directed upper powerspace $\UU(X)$.
\end{prop}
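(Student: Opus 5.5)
We verify directly that $\Box U$ is $\Rightarrow_U$-open in the sense of Definition~\ref{def:upper-convergence}. Fix a directed $\mathcal E\subseteq U(X)$ and a nonempty finite $F\subseteq X$ with $\mathcal E\Rightarrow_U\upar F\in\Box U$. Let this convergence be witnessed by directed sets $D_1,\ldots,D_k$ satisfying clauses (i)--(iii). We must find some $\upar G\in\mathcal E$ with $\upar G\subseteq U$.

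First, for each $i$, clause (i) gives a point $a_i\in F\cap\Lim(D_i)$. Since $\upar F\subseteq U$, we have $a_i\in U$. As $U$ is open in $X$ and $D_i\to a_i$, the directed set $D_i$ meets $U$, so we can choose $d_i\in D_i\cap U$ for each $i=1,\ldots,k$. Only clause (i) is needed here; clause (ii) is not used.

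Next, apply clause (iii) to the tuple $(d_1,\ldots,d_k)\in D_1\times\cdots\times D_k$. This yields $\upar G\in\mathcal E$ with $\upar G\subseteq\bigcup_{i=1}^k\upar d_i$. Open sets in $X$ are upper sets for the specialization order and each $d_i\in U$, so $\upar d_i\subseteq U$ for all $i$. Hence $\upar G\subseteq U$, that is, $\upar G\in\mathcal E\cap\Box U$. By definition $\Box U$ is then $\Rightarrow_U$-open, so it is open in $\UU(X)$.

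There is essentially no hard step. The only point requiring care is to use the $\Rightarrow_U$-openness description of the topology, not a general directed-convergence one. The definition of $\mathcal O_{\Rightarrow_U}(U(X))$ makes this description exactly the criterion for openness, in the same way as in Proposition~\ref{prop:convex-topology}.
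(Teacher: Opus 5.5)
Your proof is correct and matches the paper's argument essentially step for step. You choose $a_i\in F\cap\Lim(D_i)\subseteq U$, use openness of $U$ to get $d_i\in D_i\cap U$, and then apply the product clause together with upperness of $U$ to obtain $\upar G\in\mathcal E\cap\Box U$.
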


\begin{proof}
Suppose $\mathcal E\Rightarrow_U\upar F$ and $\upar F\in\Box U$.  Let $D_1,\ldots,D_k$ witness $\mathcal E\Rightarrow_U\upar F$.  Since $\upar F\subseteq U$, each point of $F$ belongs to $U$.  For each $i$, choose $a_i\in F\cap\Lim(D_i)$.  Since $U$ is open and $D_i\to a_i\in U$, choose $d_i\in D_i\cap U$.  By the condition of $\Rightarrow_U$, there is $\upar G\in\mathcal E$ such that
\[
        \upar G\subseteq\bigcup_{i=1}^k\upar d_i .
\]
Since $U$ is upper, $\bigcup_i\upar d_i\subseteq U$, and therefore $\upar G\in\Box U$.  Hence $\mathcal E\cap\Box U\neq\varnothing$.  It follows that $\Box U$ is open in $\UU(X)$.
\end{proof}

\begin{prop}[Lower Vietoris opens]\label{prop:diamond-open}
Let $X$ be a directed space and let $U\subseteq X$ be open.  Then
\[
        \Diamond U=\{\widehat F\in C(X):F\cap U\neq\varnothing\}
\]
is open in the directed convex powerspace $\CC(X)$.
\end{prop}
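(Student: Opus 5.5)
To show that $\Diamond U$ is $\Rightarrow_C$-open, I will argue directly from Definition~\ref{def:convex-convergence}, in the same way as in the proof of Proposition~\ref{prop:box-open}. Only the lower-coordinate clause (i), together with the limit clause (ii), is needed; clause (iv) plays no role. First I check that $\Diamond U$ is well defined, i.e.\ that it depends only on $\widehat F$ and not on the chosen generator $F$. Since $U$ is open, it is an upper set, so $F\cap U\neq\varnothing$ holds if and only if $\down F\cap U\neq\varnothing$. The right-hand side depends only on the first component $\down F$ of $\widehat F$.

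Now fix a directed $\mathcal D\subseteq C(X)$ with $\mathcal D\Rightarrow_C\widehat F$ and $\widehat F\in\Diamond U$, witnessed by $D_1,\ldots,D_k$. Pick $a\in F\cap U$. By clause (iii) there is an index $i$ with $a\in\Lim(D_i)$. Since $U$ is open and $D_i\to a\in U$, there is some $d\in D_i\cap U$. By clause (i), $d\in\down G$ for some $\widehat G\in\mathcal D$, so $d\leq g$ for some $g\in G$. Because $U$ is upper, $g\in U$. Hence $G\cap U\neq\varnothing$, that is, $\widehat G\in\mathcal D\cap\Diamond U$. This is exactly the defining condition for $\Rightarrow_C$-openness, so $\Diamond U\in\mathcal{O}_{\Rightarrow_C}(C(X))$.

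I do not expect a real obstacle here. The only points needing care are the well-definedness check, which uses that open sets are upper, and making sure the relevant limit is taken through clause (iii), not clause (ii). Clause (ii) only gives some point of $F$ that is a limit of $D_i$, while clause (iii) guarantees that the chosen $a$ itself is a limit of some $D_i$.
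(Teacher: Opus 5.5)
Your proof is correct and matches the paper's argument step for step: pick $a\in F\cap U$, use the limit-covering clause (iii) to get $D_i\to a$, take $d\in D_i\cap U$, and then use the lower-coordinate clause (i) together with upperness of $U$ to find some $\widehat G\in\mathcal D\cap\Diamond U$. The well-definedness check is a harmless addition that the paper omits. One small slip: your opening sentence says the argument needs clause (ii), but it actually uses clause (iii), as your body and closing remark correctly state.
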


\begin{proof}
Suppose $\mathcal D\Rightarrow_C\widehat F$ and $\widehat F\in\Diamond U$.  Let $D_1,\ldots,D_k$ witness the convergence $\mathcal D\Rightarrow_C\widehat F$ in Definition~\ref{def:convex-convergence}.  Choose $a\in F\cap U$.  By the third clause, there is an index $i$ with $a\in\Lim(D_i)$.  Because $D_i\to a$ and $U$ is open, there is some $d\in D_i\cap U$.

By the lower-coordinate clause in the definition of $\Rightarrow_C$, the element $d\in D_i$ lies in
\[
        \bigcup\{\down G:\widehat G\in\mathcal D\}.
\]
Hence there exists $\widehat G\in\mathcal D$ and an element $g\in G$ such that $d\leq g$.  Since every open set is an upper set for the specialization order, $d\in U$ and $d\leq g$ imply $g\in U$.  Thus $g\in G\cap U$, so by the definition of $\Diamond U$ we have
\[
        \widehat G\in\Diamond U .
\]
As this same $\widehat G$ also belongs to $\mathcal D$, it follows that $\mathcal D\cap\Diamond U\neq\varnothing$. Hence $\Diamond U$ is open in $\CC(X)$.
\end{proof}

\begin{theor}[Upper powerspaces reflect quasicontinuity]\label{the:upper-qc-reflect}
Let $X$ be a directed space.  If $\UU(X)$ is quasicontinuous, then $X$ is quasicontinuous.  Consequently, if $\PU(L)$ is quasicontinuous, then $L$ is quasicontinuous.
\end{theor}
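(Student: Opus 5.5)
Fix an open set $U\subseteq X$ and a point $x\in U$. I would transport this situation to $\UU(X)$ via the upper Vietoris opens, apply quasicontinuity there, and then push the resulting finite family of generators back down to $X$.

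First, by Proposition~\ref{prop:box-open}, $\Box U$ is open in $\UU(X)$ and contains $\upar x$. Quasicontinuity of $\UU(X)$ then gives a finite family $\upar F_1,\ldots,\upar F_n\in\Box U$ with
\[
        \upar x\in\intr_{\UU(X)}\bigl(\upar\{\upar F_1,\ldots,\upar F_n\}\bigr)\subseteq\Box U .
\]
Call this interior $\mathcal W$. Put $F=F_1\cup\cdots\cup F_n$; this is a finite subset of $X$, and $F\subseteq U$ because each $\upar F_j\subseteq U$. It remains to show $x\in\intr_X(\upar F)$.

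For this I would use the unit $\eta\colon X\to\UU(X)$, $\eta(y)=\upar y$. It is continuous: if $D\to y$ is directed, then $\{\upar d:d\in D\}\Rightarrow_U\upar\{y\}$, witnessed by $k=1$ and $D_1=D$. Clause (iii) holds with $G=\{d_1\}$. Proposition~\ref{prop:conv-continuity-criterion} then gives continuity, and this is also part of the freeness statement in Definition~\ref{def:upper-convergence}. Hence $V=\eta^{-1}(\mathcal W)$ is open in $X$ and contains $x$. If $y\in V$, then $\upar F_j\leq_U\upar y$ for some $j$, so $\upar y\subseteq\upar F_j$. Thus $y\in\upar F_j\subseteq\upar F$, giving $x\in V\subseteq\upar F$. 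Since $F\subseteq U$ and $U$ is upper, $\upar F\subseteq U$, so $X$ is quasicontinuous in the sense of Definition~\ref{def:directed-continuity}.

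For the dcpo statement, take $X=\Sigma L$. By Theorem~\ref{the:qc-completion}, quasicontinuity of $\PU(L)$ is equivalent to quasicontinuity of $\UU(\Sigma L)$. The directed-space part then makes $\Sigma L$ quasicontinuous, which by Definition~\ref{def:directed-continuity} means $L$ is a quasicontinuous dcpo.

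The main point needing care is the continuity of $\eta$, that is, checking the three clauses of $\Rightarrow_U$. This is routine, so I expect no real obstacle. Note that the argument does not require the finite set produced in $\UU(X)$ to consist of principal filters; taking the union of generators handles the general case.
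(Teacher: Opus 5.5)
Your proposal is correct and follows the paper's proof essentially step for step. You pull back the open set $\intr\bigl(\upar\{\upar F_1,\ldots,\upar F_n\}\bigr)\subseteq\Box U$ along the unit $y\mapsto\upar y$ and take $F=F_1\cup\cdots\cup F_n$, exactly as the paper does. The only addition is your explicit check that the unit sends directed convergence to $\Rightarrow_U$-convergence, which the paper simply asserts.
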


\begin{proof}
Let $x\in O$, where $O$ is open in $X$, and consider
\(
        \Box O=\{\upar F\in U(X):\upar F\subseteq O\}.
\)
By Proposition~\ref{prop:box-open}, $\Box O$ is open in $\UU(X)$, and the canonical map
\[
        i\colon X\to\UU(X),\qquad i(y)=\upar y,
\]
satisfies $i(x)=\upar x\in\Box O$. Assume that $\UU(X)$ is quasicontinuous.  Then there are finitely many elements
\(
        \upar F_1,\ldots,\upar F_m\in\Box O
\)
such that
\[
        \upar x\in
        \intr_{\UU}\bigl(\upar_U\{\upar F_1,\ldots,\upar F_m\}\bigr)
        \subseteq \Box O .
\]
Let $F=F_1\cup\cdots\cup F_m$.  Since each $\upar F_j\subseteq O$, we have $F\subseteq O$.  Pull back the displayed open neighbourhood along $i$:
\[
        W=i^{-1}\!\left(
        \intr_{\UU}\bigl(\upar_U\{\upar F_1,\ldots,\upar F_m\}\bigr)
        \right).
\]
Then by the continuity of $i$, $W$ is open in $X$ and $x\in W$.  If $y\in W$, then
\[
        \upar y\in\upar_U\{\upar F_1,\ldots,\upar F_m\},
\]
so $\upar F_j\leq_U\upar y$ for some $j$.  By the upper order this means
\(
        \upar y\subseteq\upar F_j,
\)
and hence $y\in\upar F_j\subseteq\upar F$.  Thus $W\subseteq\upar F$.  Since $F\subseteq O$ and $O$ is upper,
\[
        x\in\intr_X(\upar F)\subseteq O.
\]
This proves that $X$ is quasicontinuous.  The dcpo statement follows by applying the result to $X=\Sigma L$ and using Theorem~\ref{the:qc-completion}.
\end{proof}

\begin{theor}[Convex powerspaces reflect quasicontinuity]\label{the:convex-qc-reflect}
Let $X$ be a directed space.  If $\CC(X)$ is quasicontinuous, then $X$ is quasicontinuous.  Consequently, if $\PC(L)$ is quasicontinuous, then $L$ is quasicontinuous.
\end{theor}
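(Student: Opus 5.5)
The plan is to imitate the proof of Theorem~\ref{the:upper-qc-reflect}, replacing the upper Vietoris open $\Box O$ of $\UU(X)$ by its analogue in $\CC(X)$. Fix an open set $O\subseteq X$ and a point $x\in O$, and put
\[
        \Box_C O=\{\widehat F\in C(X):F\subseteq O\}=\{\widehat F\in C(X):\upar F\subseteq O\}.
\]
Since $O$ is an upper set, $F\subseteq O$ is equivalent to $\upar F\subseteq O$. Hence $\Box_C O$ depends only on $\widehat F$ and not on the chosen generator $F$.

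The first step is to show that $\Box_C O$ is $\Rightarrow_C$-open. I expect this to be the only genuinely new point, and I would copy the proof of Proposition~\ref{prop:box-open}, using only the last three clauses of Definition~\ref{def:convex-convergence}. Suppose $\mathcal D\Rightarrow_C\widehat F$ with $F\subseteq O$, witnessed by $D_1,\ldots,D_k$. For each $i$, clause (ii) gives some $a_i\in F\cap\Lim(D_i)$. Since $a_i\in O$ and $O$ is open, we can pick $d_i\in D_i\cap O$. Clause (iv) then gives $\widehat G\in\mathcal D$ with
\[
        \upar G\subseteq\bigcup_{i=1}^k\upar d_i\subseteq O ,
\]
so $\widehat G\in\mathcal D\cap\Box_C O$. (Equivalently, $\Box_C O=r^{-1}(\Box O)$ for the map $r(\widehat F)=\upar F$. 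The continuity argument for $r$ in Theorem~\ref{the:directed-retract} does not use the top element, so this gives a second route.)

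The second step uses the unit $i_X(y)=\widehat{\{y\}}$, which is continuous by Theorem~\ref{the:directed-convex-free}, and we have $i_X(x)\in\Box_C O$. Assume $\CC(X)$ is quasicontinuous. Then there are finitely many $\widehat F_1,\ldots,\widehat F_m\in\Box_C O$ with
\[
        \widehat{\{x\}}\in\intr_{\CC}\bigl(\upar_C\{\widehat F_1,\ldots,\widehat F_m\}\bigr)\subseteq\Box_C O .
\]
Set $F=F_1\cup\cdots\cup F_m\subseteq O$ and $W=i_X^{-1}\bigl(\intr_{\CC}(\upar_C\{\widehat F_1,\ldots,\widehat F_m\})\bigr)$. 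Then $W$ is an open neighbourhood of $x$ in $X$. If $y\in W$, then $\widehat F_j\leq_C\widehat{\{y\}}$ for some $j$. By the upper half of $\leq_C$ this gives $\upar y\subseteq\upar F_j$, so $y\in\upar F$. Thus $x\in W\subseteq\intr_X(\upar F)$, and $\upar F\subseteq O$ because $O$ is upper. This proves that $X$ is quasicontinuous.

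Only the upper component of $\leq_C$ is used; the lower component plays no role here. Finally, apply the result to $X=\Sigma L$ and use Theorem~\ref{the:qc-completion}: $\PC(L)$ is quasicontinuous if and only if $\CC(\Sigma L)$ is, and the latter forces $\Sigma L$, hence $L$, to be quasicontinuous.
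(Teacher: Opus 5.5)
Your proof is correct, but it takes a different route from the paper: it uses the upper half of the Egli--Milner order, whereas the paper uses the lower half.

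The paper works with the lower Vietoris open $\Diamond O=\{\widehat F: F\cap O\neq\varnothing\}$. Its openness (Proposition~\ref{prop:diamond-open}) rests on the lower-coordinate clause (i). The paper then picks one point $a_j\in A_j\cap O$ from each approximating generator and uses $\down A_j\subseteq\down y$ to get $a_j\leq y$.

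You instead use $\Box_C O=r^{-1}(\Box O)$, whose openness rests only on clauses (ii)--(iv), and conclude from $\upar y\subseteq\upar F_j$. This makes visible that reflection of quasicontinuity by $\CC$ is just the argument of Theorem~\ref{the:upper-qc-reflect} pulled back along $r$. As you correctly observe, the continuity of $r$ needs no top element.

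The paper's route is the one that upgrades to continuity. With a single approximant $\widehat A$, the $\Diamond$ argument extracts a single point $a\in A\cap U$ with $W\subseteq\upar a$, which is exactly how Theorem~\ref{the:continuity-reflection} obtains continuity reflection. Your $\Box_C$ argument would only give $W\subseteq\upar A$, with $A$ possibly having several points. That yields quasicontinuity again, consistent with the fact that upper powerspaces do not reflect continuity.
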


\begin{proof}
Let $x\in O$, where $O$ is open in $X$, and put
\(
        \Diamond O=\{\widehat F\in C(X):F\cap O\neq\varnothing\}.
\)
By Proposition~\ref{prop:diamond-open}, $\Diamond O$ is open in $\CC(X)$.  Let $i$ be the canonical map
\[
        i\colon X\to\CC(X),\qquad i(y)=\widehat{\{y\}}.
\]
Then $i(x)=\widehat{\{x\}}\in\Diamond O$. Assume that $\CC(X)$ is quasicontinuous.  Then there exist finitely many elements
\(
        \widehat A_1,\ldots,\widehat A_m\in\Diamond O
\)
such that
\[
        \widehat{\{x\}}\in
        \intr_{\CC}\bigl(\upar_C\{\widehat A_1,\ldots,\widehat A_m\}\bigr)
        \subseteq\Diamond O .
\]
For each $j$ from $1$ to $m$, choose $a_j\in A_j\cap O$, and set $F=\{a_1,\ldots,a_m\}\subseteq O$.  Pulling back the displayed open neighbourhood along $i$, let
\[
        W=i^{-1}\!\left(
        \intr_{\CC}\bigl(\upar_C\{\widehat A_1,\ldots,\widehat A_m\}\bigr)
        \right).
\]
Then $W$ is open and $x\in W$.  If $y\in W$, then
\[
        \widehat{\{y\}}\in\upar_C\{\widehat A_1,\ldots,\widehat A_m\},
\]
so $\widehat A_j\leq_C\widehat{\{y\}}$ for some $j$.  The Egli--Milner order gives $\down A_j\subseteq\down y$, and hence $a_j\leq y$.  Thus $W\subseteq\upar F$.  Since $F\subseteq O$ and $O$ is upper,
\[
        x\in\intr_X(\upar F)\subseteq O.
\]
Therefore $X$ is quasicontinuous.  The dcpo statement follows by applying the result to $X=\Sigma L$ and using Theorem~\ref{the:qc-completion}.
\end{proof}

\subsection{Reflection of Continuity and A Separating Example}

The preceding counterexample shows that convex powerdomains do not preserve quasicontinuity.  We now add a complementary phenomenon: convex powerdomains are much more sensitive to continuity than upper powerdomains, because the lower component of a lens records genuine lower approximation in the base dcpo.

\begin{theor}[Continuity reflection]\label{the:continuity-reflection}
Let $X$ be a directed space.  If $\CC(X)$ is a continuous directed space, then $X$ is continuous.
\end{theor}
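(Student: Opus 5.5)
The plan is to repeat the proof of Theorem~\ref{the:convex-qc-reflect}, replacing the finite-set characterization of quasicontinuity with the single-element ($c$-space) characterization of continuity from Definition~\ref{def:directed-continuity}. The lower Vietoris open sets of Proposition~\ref{prop:diamond-open} turn a single approximant in $\CC(X)$ into a single approximant in $X$.

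Let $x\in O$ with $O$ open in $X$, and put $\Diamond O=\{\widehat F\in C(X):F\cap O\neq\varnothing\}$. By Proposition~\ref{prop:diamond-open}, $\Diamond O$ is open in $\CC(X)$. The canonical map $i\colon X\to\CC(X)$, $i(y)=\widehat{\{y\}}$, is continuous by Theorem~\ref{the:directed-convex-free}, and $i(x)\in\Diamond O$. Since $\CC(X)$ is a $c$-space, there is a single element $\widehat A\in\Diamond O$ with
\[
        \widehat{\{x\}}\in\intr_{\CC}\bigl(\upar_C\widehat A\bigr)\subseteq\Diamond O .
\]
Choose $a\in A\cap O$, which exists because $\widehat A\in\Diamond O$. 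Set $W=i^{-1}\bigl(\intr_{\CC}(\upar_C\widehat A)\bigr)$. By continuity of $i$, $W$ is an open neighbourhood of $x$ in $X$.

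Next we show $W\subseteq\upar a$. If $y\in W$, then $\widehat A\leq_C\widehat{\{y\}}$. The lower half of the Egli--Milner order gives $\down A\subseteq\down y$, so every element of $A$ lies below $y$; in particular $a\leq y$. Hence $x\in W\subseteq\intr_X(\upar a)$. Since $a\in O$ and $O$ is an upper set, $\intr_X(\upar a)\subseteq O$. So for every open $O$ and every $x\in O$ we have found $a\in O$ with $x\in\intr(\upar a)\subseteq O$, which means $X$ is a $c$-space, i.e.\ continuous.

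No step is a real obstacle. The one point to get right is using the lower component of the lens, not the upper one: for $\UU(X)$ the analogous argument only yields $y\in\upar F_j$ for a finite set $F_j$, which is exactly why continuity reflection fails for upper powerspaces. In $\CC(X)$ the inclusion $\down A\subseteq\down y$ forces a single chosen point $a\in A\cap O$ below $y$. The dcpo corollary, that $\PC(L)$ continuous implies $L$ continuous, then follows by taking $X=\Sigma L$ and applying Remark~\ref{the:qc-completion}.
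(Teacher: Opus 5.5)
Your proof is correct and is essentially the paper's own argument. Both use the lower Vietoris open $\Diamond O$ and take a single approximant $\widehat A$ from the $c$-space property of $\CC(X)$. Both then pick $a\in A\cap O$, pull back $\intr(\upar_C\widehat A)$ along $i_X$, and use $\down A\subseteq\down y$ to conclude $x\in\intr_X(\upar a)\subseteq O$.
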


\begin{proof}
Let $x\in U$, where $U$ is open in $X$.  By Proposition~\ref{prop:diamond-open}, $\Diamond U$ is an open neighbourhood of $\widehat{\{x\}}$ in $\CC(X)$.  Since $\CC(X)$ is continuous, there exists a finite nonempty $A\subseteq X$ such that
\[
        \widehat{\{x\}}\in\intr(\upar_C\widehat A)
        \subseteq \Diamond U .
\]
Because $\widehat A\in\Diamond U$, choose $a\in A\cap U$.  Put
\[
        W=i_X^{-1}\bigl(\intr(\upar_C\widehat A)\bigr),
        \qquad
        i_X(y)=\widehat{\{y\}} .
\]
Then $W$ is open in $X$ and $x\in W$.  If $y\in W$, then $\widehat A\leq_C\widehat{\{y\}}$, hence $\down A\subseteq\down y$.  In particular, $a\leq y$, so $W\subseteq\upar a$.  Therefore
\[
        x\in\intr_X(\upar a)\subseteq U .
\]
By Definition~\ref{def:directed-continuity}, $X$ is continuous.
\end{proof}

\begin{corollar}\label{cor:convex-continuity-equivalence}
For any dcpo $L$,
        $\PC(L)\text{ is continuous}$
if and only if
        $L\text{ is continuous}$.
\end{corollar}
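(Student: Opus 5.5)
The corollary follows by combining three ingredients already available. One direction uses Theorem~\ref{the:continuity-preserved}, the other uses Theorem~\ref{the:continuity-reflection}. Remark~\ref{the:qc-completion} serves as the bridge between the directed-space level and the dcpo level. I would take $X=\Sigma L$ throughout and work with $\CC(\Sigma L)$.

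\emph{($\Leftarrow$)} Suppose $L$ is continuous. Then $\PC(L)$ is continuous by the dcpo part of Theorem~\ref{the:continuity-preserved}, and nothing more is needed. If one prefers to argue at the directed level, note that $\Sigma L$ is a continuous directed space by the last sentence of Definition~\ref{def:directed-continuity}. Theorem~\ref{the:continuity-preserved} then gives that $\CC(\Sigma L)$ is continuous, and Remark~\ref{the:qc-completion} transfers continuity to its Scott completion. That completion is $\PC(L)$ by Theorem~\ref{the:scott-completion}.

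\emph{($\Rightarrow$)} Suppose $\PC(L)$ is continuous. By Theorem~\ref{the:scott-completion}, $\PC(L)$ is the Scott completion of $\CC(\Sigma L)$. The equivalence in Remark~\ref{the:qc-completion}, with ``quasicontinuous'' replaced by ``continuous'', then shows that $\CC(\Sigma L)$ is a continuous directed space. Theorem~\ref{the:continuity-reflection}, applied with $X=\Sigma L$, yields that $\Sigma L$ is continuous as a directed space. By Definition~\ref{def:directed-continuity}, continuity of the Scott space $(L,\sigma(L))$ in the $c$-space sense coincides with continuity of the dcpo $L$. Hence $L$ is continuous.

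The only delicate point is the reflection direction of Remark~\ref{the:qc-completion}: the passage from continuity of $\PC(L)$ back to continuity of $\CC(\Sigma L)$. Everything else is a direct citation. I would simply invoke the remark, since it explicitly covers $\CC(\Sigma L)$ and $\PC(L)$. I would also add a sentence noting that the identification of $\Sigma L$-continuity with dcpo continuity is the standard $c$-space characterization.
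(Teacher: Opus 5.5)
Your proposal is correct and is essentially the paper's proof. The forward direction is Theorem~\ref{the:continuity-reflection} applied to $X=\Sigma L$, combined with the Scott-completion transfer of Remark~\ref{the:qc-completion}, which the paper compresses into ``together with the Scott-completion representation''; the reverse direction is the dcpo part of Theorem~\ref{the:continuity-preserved}, and the reflection step you single out as the delicate citation is the same dependency the paper's proof rests on.
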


\begin{proof}
The forward implication is Theorem~\ref{the:continuity-reflection}, together with the Scott-completion representation.  The reverse implication is the known continuity-preservation result of Theorem~\ref{the:continuity-preserved} applied to $L$.
\end{proof}

\begin{note}
For upper powerdomains, continuity reflection is false.  For example, the two-chain dcpo of Example~\ref{ex:two-chain} is quasicontinuous but not continuous, yet its upper powerdomain is continuous \cite[Example~4.13]{ChenKouLyu2024}. 
\end{note}

We next display an example showing that quasicontinuity of a convex powerdomain need not imply continuity.

\begin{ex}[The two-chain dcpo]\label{ex:two-chain}
Let
\(
        Y=\{a_n:n\geq 1\}\cup\{b_n:n\geq 1\}\cup\{\omega\},
\)
ordered by
\[
        a_1<a_2<\cdots<\omega,\qquad
        b_1<b_2<\cdots<\omega,
\]
with the two chains incomparable below $\omega$.  Then $Y$ is a dcpo, with $\bigvee_n a_n=\bigvee_n b_n=\omega$. It is easy to see that $Y$ is quasicontinuous but not continuous.  For $1\leq p\leq m$, write
\[
        A[p,m]=\{a_i:p\leq i\leq m\},\qquad
        B[p,m]=\{b_i:p\leq i\leq m\},
\]
and
\[
        A[p,\infty]=\{a_i:i\geq p\}\cup\{\omega\},\qquad
        B[p,\infty]=\{b_i:i\geq p\}\cup\{\omega\}.
\]
\end{ex}

We now show, at the directed-space level, that its convex powerdomain is nevertheless quasicontinuous.

\begin{lema}[Scott description of the two-chain convex carrier]\label{lema:two-chain-scott-carrier}
The ordered set $(C(\Sigma Y),\leq_C)$ is a dcpo and
\[
        \mathcal{O}_{\Rightarrow_C}(C(\Sigma Y))=\sig(C(\Sigma Y),\leq_C).
\]
\end{lema}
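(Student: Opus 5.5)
We prove the lemma in three steps: describe $C(\Sigma Y)$ concretely, show that it is a dcpo, and then compare the two topologies by proving two inclusions.

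\textbf{Step 1 (the carrier).} Every nonempty finite $F\subseteq Y$ is determined, up to $\widehat{\ }$-equality, by its maximal part (which gives $\down F$) and its minimal part (which gives $\upar F$). Since $Y$ is two chains with a common top, $\down F$ is one of the sets $\down a_m$, $\down b_n$, $\down a_m\cup\down b_n$, or $Y$. The set $\upar F$ is $\upar a_p$, $\upar b_q$, $\upar a_p\cup\upar b_q$, or $\{\omega\}$. Hence every lens is of the form $A[p,m]$, $B[q,n]$, $A[p,m]\cup B[q,n]$, or one of these with $\omega$ adjoined, intersected with the convex hull; only the endpoints matter.

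\textbf{Step 2 ($(C(\Sigma Y),\leq_C)$ is a dcpo).} Take a directed family $\mathcal D$. The lower parts increase, so $\bigcup\{\down G:\widehat G\in\mathcal D\}$ is a union of chain segments. If this union is not attained at a finite stage, it must contain an unbounded part of one of the chains. The upper parts decrease in $\upar$-inclusion, so the minimal elements move upward. I would show that the supremum is $\widehat H$, where $H$ consists of the eventual minimal elements together with the maximal elements of the union of lower parts, with $\omega$ replacing any unbounded chain. One must check that this $H$ is finite and that $\widehat H$ is the least upper bound. This works because each chain is $\omega$-type and $\omega$ is the top.

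\textbf{Step 3 (the two inclusions).} Scott opens are $\Rightarrow_C$-open: given $\mathcal D\Rightarrow_C\widehat F$ with witnesses $D_1,\dots,D_k$, we show $\widehat F\leq_C\bigvee\mathcal D$. The lower part holds because each $f\in F$ is a limit of some $D_i\subseteq\bigcup\down G$, and directed limits in $\Sigma Y$ lie below sups. The upper part uses clause (iv) exactly as in Proposition~\ref{prop:convex-topology}.

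Conversely, $\Rightarrow_C$-opens are Scott open. Take a directed $\mathcal D$ with $\bigvee\mathcal D=\widehat H$; we must show $\mathcal D\Rightarrow_C\widehat H$. Choose witnesses case by case from Step 1. For each maximal element of $H$ that is $\omega$ reached by an unbounded chain, take $D_i$ to be that chain's elements appearing in the lower parts. Each other element of $H$ is eventually attained by $\mathcal D$, so take a singleton for it. Clauses (i)–(iii) are then straightforward.

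\textbf{Main obstacle.} The hard part is clause (iv) when the minimal elements of members of $\mathcal D$ climb to $\omega$. In that case we need, for any choice $d_i\in D_i$, a member $\widehat G\in\mathcal D$ whose upper part lies inside $\bigcup\upar d_i$. The witnesses must therefore be chosen to track the upper parts as well. I would include, as additional $D_i$, the chains formed by the minimal elements of members of $\mathcal D$, since these lie in the lower parts as well. The growing upper part then satisfies clause (iv) for large enough $\widehat G$. A finite case split on whether each chain is bounded in the lower parts and in the upper parts should complete the argument.
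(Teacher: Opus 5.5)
Your proposal is correct and follows the paper's route. Both proofs use the endpoint description of the lenses, get suprema from endpoints that either stabilize or move cofinally along a branch, and prove $\mathcal{O}_{\Rightarrow_C}(C(\Sigma Y))\subseteq\sig(C(\Sigma Y),\leq_C)$ by showing $\mathcal D\Rightarrow_C\bigvee\mathcal D$ with singleton witnesses for stabilized endpoints and branch chains for cofinal movement; you rightly single out cofinally climbing upper endpoints as the case where clause (iv) needs the chain witnesses. The only cosmetic difference is in the other inclusion: you verify $\widehat F\leq_C\bigvee\mathcal D$ directly by the argument of Proposition~\ref{prop:convex-topology}, whereas the paper cites the general fact that Scott opens are open in any directed topology with this dcpo as specialization order, which rests on the same closedness of principal down-sets.
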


\begin{proof}
Identify a finite lens $\widehat F$ with the subset $\down F\cap\upar F$ it determines; this gives a one-to-one correspondence.  Then the elements of $C(\Sigma Y)$ can be characterized exactly as the following finite-type lenses:
\[
        A[p,m],\quad B[q,n],\quad A[p,m]\cup B[q,n],
\]
together with
\[
        A[p,\infty],\quad B[q,\infty],\quad
        A[p,\infty]\cup B[q,\infty],\quad \{\omega\},\quad Y .
\]
We claim that $C(\Sigma Y)$ is closed under directed suprema.  For a directed family, the finite endpoints on each branch either stabilize at a finite value or move cofinally to $\omega$; taking the Scott closure of the union of the lower components and the intersection of the upper components gives precisely one of the listed lens types.  This candidate is the least Egli--Milner upper bound, so $(C(\Sigma Y),\leq_C)$ is a dcpo.

We next compare the two topologies.  Since the Scott topology is the coarsest directed topology on any dcpo, we have $\sig(C(\Sigma Y),\leq_C)\subseteq\mathcal{O}_{\Rightarrow_C}(C(\Sigma Y))$. We need only show that every directed open set $\mathcal U\in\mathcal{O}_{\Rightarrow_C}(C(\Sigma Y))$ is Scott open.  It is enough to show that for every directed family $\mathcal D$ of $C(\Sigma Y)$, $\mathcal D \Rightarrow_C \bigvee\mathcal D$.
Let $\mathcal D$ be directed and let $\widehat F=\bigvee\mathcal D$.  
If the supremum lens $\widehat F$ is finite-type, the endpoint description above implies that it is already reached by a member of the family, and singleton witnesses suffice.  If the supremum has lower component $Y$, use singleton witnesses for the finite upper thresholds which are eventually reached, and use the cofinal branch witnesses
\[
        A=\{a_i:i\geq 1\},\qquad B=\{b_i:i\geq 1\}
\]
for the occurrences of $\omega$ forced by cofinal movement along the two branches.  The lower and limit-covering clauses are immediate; the fourth clause follows because any chosen tuple imposes only finitely many endpoint requirements, which directedness realizes in one member of the family. Therefore $\mathcal D\Rightarrow_C\widehat F$.  This proves that $\mathcal{O}_{\Rightarrow_C}(C(\Sigma Y))\subseteq\sig(C(\Sigma Y),\leq_C)$, and hence the two topologies coincide.
\end{proof}

\begin{lema}[Quasicontinuity of the two-chain convex powerspace]\label{lema:two-chain-convex-qc}
The directed convex powerspace $\CC(\Sigma Y)$ is quasicontinuous.  Consequently, $\PC(Y)$ is quasicontinuous.
\end{lema}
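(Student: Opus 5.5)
By Lemma~\ref{lema:two-chain-scott-carrier}, $\CC(\Sigma Y)$ is the Scott space of the dcpo $P=(C(\Sigma Y),\leq_C)$. It therefore suffices to show that $P$ is a quasicontinuous dcpo in the usual sense. The dcpo consequence then follows from Theorem~\ref{the:qc-completion}: in fact $P$ is its own Scott completion, so $\PC(Y)\cong P$. I would verify the open-set form of Definition~\ref{def:directed-continuity}: for every Scott open $\mathcal U\subseteq P$ and $\widehat F\in\mathcal U$, find a finite $\mathcal G\subseteq\mathcal U$ with $\widehat F\in\intr(\upar_C\mathcal G)$. The plan is to split along the list of lens types in the proof of Lemma~\ref{lema:two-chain-scott-carrier}.

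\textbf{Step 1: finite-type lenses are compact.} The lenses $A[p,m]$, $B[q,n]$ and $A[p,m]\cup B[q,n]$ do not contain $\omega$. I claim they are compact elements of $P$. If $\mathcal D$ is directed with $\bigvee\mathcal D\geq_C\widehat F$ and $\widehat F$ is of finite type, then the finitely many endpoints of $\widehat F$ are already realised below some member of $\mathcal D$. This holds because each endpoint is a non-$\omega$ element, and both ``$a_m\in\down$'' and ``$\upar$ of the member lies in $\upar F$'' are finite conditions. This is the endpoint-stabilisation argument already used in the lemma. For such $\widehat F$ take $\mathcal G=\{\widehat F\}$; then $\upar_C\widehat F$ is Scott open.

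\textbf{Step 2: lenses containing $\omega$.} These are $A[p,\infty]$, $B[q,\infty]$, $A[p,\infty]\cup B[q,\infty]$, $\{\omega\}$ and $Y$. Each is a directed supremum of finite-type lenses. For example, $A[p,\infty]\cup B[q,\infty]=\bigvee_n\widehat{\{a_p,b_q,a_n,b_n\}}$; this is really the finite lens of $\{a_p,b_q,\omega\}$, approximated by $\widehat{\{a_p,b_q,a_n,b_n\}}$. Likewise $\{\omega\}$ is approximated both by $\widehat{\{a_n\}}$ and by $\widehat{\{b_n\}}$, but not by a single chain of lenses way-below it. This mirrors the failure of continuity in $Y$. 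Given Scott open $\mathcal U\ni\widehat F$, I would pick $n$ with the approximant $\widehat{G_n}\in\mathcal U$ along the $a$-branch and an approximant $\widehat{H_n}\in\mathcal U$ along the $b$-branch, and set $\mathcal G=\{\widehat{G_n},\widehat{H_n}\}$. For example, for $\{\omega\}$ this gives $\mathcal G=\{\widehat{\{a_n\}},\widehat{\{b_n\}}\}$. For the mixed types I would use combinations such as $\widehat{\{a_p,b_q,a_n,b_n\}}$.

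\textbf{Step 3: the main obstacle.} The hard part is showing that $\upar_C\mathcal G$ is a Scott neighbourhood of $\widehat F$. Concretely, I must show that the Scott interior of $\upar_C\mathcal G$ contains $\widehat F$, i.e.\ that every directed family with supremum above $\widehat F$ eventually enters $\upar_C\mathcal G$. I would argue directly with the endpoint description. If $\bigvee\mathcal D\geq_C\widehat F$ and $\omega$ lies in the relevant component, then the lower components of members of $\mathcal D$ must move cofinally along at least one branch, $a$ or $b$. If they move along the $a$-branch, some member dominates $\widehat{G_n}$; if along the $b$-branch, some member dominates $\widehat{H_n}$. The upper-component conditions involve only the finitely many thresholds $a_p,b_q$, and these are met eventually, as in Step~1. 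Hence $\upar_C\mathcal G$ contains a Scott open set $\{\widehat K:\widehat K\geq_C\text{ some member of }\mathcal G\}$, which is upward closed and inaccessible by directed sups. This gives $\widehat F\in\intr(\upar_C\mathcal G)\subseteq\mathcal U$.

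The case analysis for $Y$ itself, where the lower component is all of $Y$ and the upper component is unrestricted, and the care needed with the upper components are where I expect the bookkeeping to be heaviest. In particular, a member of $\mathcal G$ must have an upper set containing the upper sets of the eventual members of $\mathcal D$. For this reason I would include the fixed thresholds $a_p,b_q$ in every generator.
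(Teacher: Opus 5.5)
Your Step~1 is false, and the rest of the plan leans on it. No finite-type lens is compact in $(C(\Sigma Y),\leq_C)$. For example, $\widehat{\{a_1\}}\leq_C\widehat{\{\omega\}}=\bigvee_n\widehat{\{b_n\}}$, yet $\widehat{\{a_1\}}\not\leq_C\widehat{\{b_n\}}$ for every $n$. The ``finite conditions'' you invoke are not eventually realised. The point $a_1$ lies in the Scott closure of $\bigcup_n\down b_n$ without lying in any $\down b_n$. Likewise $\bigcap_n\upar b_n=\{\omega\}\subseteq\upar a_1$, although no single $\upar b_n$ is contained in $\upar a_1$. Step~1 also cannot be saved by bookkeeping. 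You note in Step~2 that every lens is a directed supremum of finite-type lenses, so Step~1 would make the dcpo algebraic. Then $\CC(\Sigma Y)$ would be continuous, and Theorem~\ref{the:continuity-reflection} would force $Y$ to be continuous, which it is not.

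Steps~2--3 have a second gap. The dichotomy ``the family moves cofinally along the $a$-branch or along the $b$-branch'' ignores families that move along both. For the top, $\{\widehat{\{a_n\}},\widehat{\{b_n\}}\}$ fails. The chain $\widehat{\{a_k,b_k\}}$ has supremum $\widehat{\{\omega\}}$, but each member has both $a_k$ and $b_k$ in its upper component, so it lies above neither generator. Your rule of putting the thresholds into every generator is also wrong. For $\widehat{\{a_p,\omega\}}$, the chain $\widehat{\{b_k\}}$ reaches $\widehat{\{\omega\}}\geq_C\widehat{\{a_p,\omega\}}$ and lies above no generator containing $a_p$.

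The missing idea, which the paper does use, is this. Every neighbourhood of $\widehat F$ must absorb directed families escaping to lenses above $\widehat F$ along branches that $F$ does not control. In particular this includes families escaping to the top, which lies in every nonempty open set. So escape generators such as $\widehat{\{a_r\}}$ and $\widehat{\{b_s\}}$ have to be adjoined.

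Be aware, however, that the paper's claim that $W(L,r,s)$ is Scott open fails for single-branch $L$, for exactly your two reasons. For $L=\widehat{\{a_r\}}$ (its choice when $K=\widehat{\{\omega\}}$), the chain $\widehat{\{a_k,b_k\}}$ never enters $W(L,r,s)$. For $L=\widehat{\{a_1\}}$, neither does the chain $\widehat{\{a_1,b_n\}}$, whose supremum $\widehat{\{a_1,\omega\}}$ lies in $\upar_C\widehat{\{a_1\}}$.

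What does work is a Vietoris-type neighbourhood. For Scott open $V,W_1,\dots,W_k\subseteq Y$, the set $\{\widehat G:G\subseteq V,\ G\cap W_j\neq\varnothing\text{ for all }j\}$ is open in $\CC(\Sigma Y)$. This follows from Proposition~\ref{prop:diamond-open}, together with the argument of Proposition~\ref{prop:box-open} run with the fourth clause of $\Rightarrow_C$. The nonempty Scott opens of $Y$ are exactly the sets $\upar a_r\cup\upar b_s$, so this set is $\upar_C$ of finitely many finite lenses.

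Given $\widehat F\in\mathcal U$, take $V=\upar F\cup\upar a_N\cup\upar b_N$ and $W_x=\upar x\cup\upar a_N\cup\upar b_N$ for each maximal $x\in F$. Checking the lens types one by one, the generators can be chosen on chains in $N$ whose suprema are $\geq_C\widehat F$. Hence for large $N$ they all lie in $\mathcal U$. For example, the top needs $\{\widehat{\{a_N\}},\widehat{\{b_N\}},\widehat{\{a_N,b_N\}}\}$, whose upper set is $\{\widehat G:G\subseteq\upar a_N\cup\upar b_N\}$. Similarly, $\widehat{\{a_p,a_m\}}$ needs $\{\widehat{\{a_p,a_m\}},\widehat{\{a_p,b_N\}},\widehat{\{b_N\}}\}$.
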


\begin{proof}
By Lemma~\ref{lema:two-chain-scott-carrier}, $\CC(\Sigma Y)$ is the Scott space of the dcpo $(C(\Sigma Y),\leq_C)$.  It is enough to prove that this dcpo is quasicontinuous.  

Call a lens finite if it is one of the finite interval types
\[
        A[p,m],\qquad B[q,n],\qquad A[p,m]\cup B[q,n].
\]
For such a finite lens $L$ and $r,s\in\mathbb N$, put
\[
        W(L,r,s)=
        \upar_C L
        \cup\upar_C\widehat{\{a_r\}}
        \cup\upar_C\widehat{\{b_s\}} .
\]
This set is Scott open.  It is clearly an upper set.  Let $\mathcal D$ be directed and suppose $\bigvee\mathcal D\in W(L,r,s)$.  If the finitely many endpoint inequalities which put $\bigvee\mathcal D$ into one of the three principal upsets are already realized by some member of $\mathcal D$, then $\mathcal D\cap W(L,r,s)\neq\varnothing$.  Otherwise, by the endpoint description in Lemma~\ref{lema:two-chain-scott-carrier}, the only possible obstruction is that an endpoint reaches $\omega$ by moving cofinally along at least one branch.  Cofinal movement along the $a$-branch gives a member of $\mathcal D$ above $\widehat{\{a_r\}}$; cofinal movement along the $b$-branch gives a member above $\widehat{\{b_s\}}$.  Hence $\mathcal D$ always meets $W(L,r,s)$, and $W(L,r,s)$ is Scott open. 

Now let $\mathcal U$ be Scott open in $(C(\Sigma Y),\leq_C)$ and let $K\in\mathcal U$.  The element $\widehat{\{\omega\}}$ is the greatest element of $C(\Sigma Y)$, so $\widehat{\{\omega\}}\in\mathcal U$.  The two directed chains
\[
        \widehat{\{a_1\}}\leq_C\widehat{\{a_2\}}\leq_C\cdots,
        \qquad
        \widehat{\{b_1\}}\leq_C\widehat{\{b_2\}}\leq_C\cdots
\]
both have supremum $\widehat{\{\omega\}}$.  Scott openness gives $r,s\in\mathbb N$ such that
\[
        \widehat{\{a_r\}}\in\mathcal U,\qquad
        \widehat{\{b_s\}}\in\mathcal U .
\]

Choose a finite lens $L\in\mathcal U$ with $L\leq_C K$: if $K$ is finite, take $L=K$; if $K$ is a tail lens, take a sufficiently long finite interval in the directed chain whose supremum is $K$, and the Scott openness of $\mathcal U$ provides such an interval; if $K=\widehat{\{\omega\}}$, take $L=\widehat{\{a_r\}}$; if $K=Y$, use the directed finite two-branch intervals $A[1,m]\cup B[1,n]$, whose supremum is $Y$, and choose one lying in $\mathcal U$.

Set
\[
        \mathcal S=\{L,\widehat{\{a_r\}},\widehat{\{b_s\}}\}.
\]
Then $\mathcal S$ is finite, $\mathcal S\subseteq\mathcal U$, and $K\in\upar_C\mathcal S$.  Moreover,
\(
        \upar_C\mathcal S=W(L,r,s)
\)
is Scott open, and therefore is also open in $\CC(\Sigma Y)$ by Lemma~\ref{lema:two-chain-scott-carrier}.  Thus
\(
        K\in\intr(\upar_C\mathcal S)\subseteq\upar_C\mathcal S\subseteq\mathcal U .
\)
This proves that $\CC(\Sigma Y)$ is quasicontinuous.  The statement for $\PC(Y)$ follows from Theorem~\ref{the:qc-completion}.
\end{proof}

Therefore, for quasicontinuous dcpos $L$,
\[
        \PC(L)\text{ quasicontinuous}
        \quad\not\Rightarrow\quad
        \PC(L)\text{ continuous}.
\]
This is different from the upper construction~\cite{ChenKouLyu2024}.  For a quasicontinuous dcpo $L$,
\[
        \UU(\Sigma L)\text{ continuous}
        \quad\Longleftrightarrow\quad
        \text{the topology of }\UU(\Sigma L)\text{ is }\sig(U(\Sigma L))
        \quad\Longleftrightarrow\quad
        \PU(L)\cong \mathcal{Q}(L) .
\]
For convex powerdomains, no analogous equivalence chain is available.  The continuity theorem gives the sufficient implication
\[
        L\text{ continuous}
        \quad\Longleftrightarrow\quad
        \PC(L)\text{ continuous}
        \quad\Longrightarrow\quad
        \mathcal{O}_{\Rightarrow_C}(C(\Sigma L))=\sig(C(\Sigma L),\leq_C),
\]
which is proved in the next proposition.  The two-chain example above shows that an abstract compact-lens isomorphism
\(
        \PC(L)\cong \KConv(L)
\) and the equality of the directed convex powerspace topology with the Scott topology 
do not force continuity of $\CC(\Sigma L)$ or $\PC(L)$.  

\begin{prop}[Scott topology for continuous bases]\label{prop:convex-scott-topology-continuous}
Let $L$ be a continuous dcpo.  Then the topology of the directed convex powerspace $\CC(\Sigma L)$ coincides with the Scott topology of its specialization order.
\end{prop}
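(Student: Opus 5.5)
The inclusion $\mathcal{O}_{\Rightarrow_C}(C(\Sigma L))\subseteq\sig(C(\Sigma L),\leq_C)$ is the content; the reverse inclusion should follow as in Lemma~\ref{lema:two-chain-scott-carrier}. There it is derived from the claim that the Scott topology is the coarsest directed topology with the given specialization order. That claim is not true as a general fact, since the Alexandrov topology is coarser than nothing and the Scott topology is not always directed-coarsest. So I would verify directly that Scott-open sets are $\Rightarrow_C$-open. A $\Rightarrow_C$-convergent net $\mathcal D\Rightarrow_C\widehat F$ has $\widehat F$ below some upper bound structure, and I would argue that $\widehat F\leq_C\bigvee\mathcal D$ whenever $\bigvee\mathcal D$ exists. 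More robustly, I would use the fact that directed sets converge in the Scott topology to any lower bound of their supremum. If this step gives trouble, I would only prove the stated direction together with the fact that the $\Rightarrow_C$-topology is directed and has the same specialization order (Proposition~\ref{prop:convex-topology}). Every $\Rightarrow_C$-open set is then upper, and it remains to show it is inaccessible by directed suprema.

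The main step is therefore as follows. Let $\mathcal U$ be $\Rightarrow_C$-open, let $\mathcal D\subseteq C(\Sigma L)$ be directed, and suppose $\widehat F:=\bigvee_{\leq_C}\mathcal D$ exists and lies in $\mathcal U$. I want $\mathcal D\cap\mathcal U\neq\varnothing$. The strategy uses continuity of $L$ to approximate $F$ from below by a finite set $F'$ with $\widehat{F'}\in\mathcal U$ and $\widehat{F'}$ ``way below'' $\widehat F$. Continuity of $\CC(\Sigma L)$ from Theorem~\ref{the:continuity-preserved} makes this available. Since $\CC(\Sigma L)$ is a continuous directed space, it is a $c$-space. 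Hence there is $\widehat{G}\in\mathcal U$ with $\widehat F\in\intr(\upar_C\widehat G)$. It then suffices to show that some member of $\mathcal D$ lies above $\widehat G$, or more directly that $\mathcal D\Rightarrow_C\widehat F$ so that $\mathcal D$ meets $\intr(\upar_C \widehat G)$.

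To show $\mathcal D\Rightarrow_C\widehat F$, I will build witnesses from the continuity of $L$. For each $a\in F$, take the directed set $D_a=\{d\in L:d\ll a\}$, which converges to $a$. Clause (ii) and clause (iii) then hold with $k=|F|$. For clause (i), I need each $d\ll a$ to lie in $\bigcup\{\down G:\widehat G\in\mathcal D\}$. Here I use that $a\in\down F$ and that $\down F$ is the smallest closed-type lower part dominating the lower parts of $\mathcal D$. The union $\bigcup_{\widehat G\in\mathcal D}\down G$ must have Scott closure containing $a$: otherwise the lower set of $\widehat F$ could be shrunk, giving a smaller upper bound, which requires checking that the shrunk pair is still a finite lens. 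Once $a$ lies in the Scott closure, $d\ll a$ forces $d$ into the lower union. For clause (iv), I fix $d_a\ll a$ for each $a$ and need $\widehat G\in\mathcal D$ with $\upar G\subseteq\bigcup_a\upar d_a$. The set $\bigcup_a\intr\upar d_a\supseteq\upar F$ is Scott open. The intersection of the upper parts $\upar G$, $\widehat G\in\mathcal D$, must equal $\upar F$ by minimality of the supremum. A compactness argument then applies: the $\upar G$ form a filtered family of compact saturated sets in the sober-enough setting of a continuous dcpo, where Hofmann--Mislove holds. Their intersection lies in the open set, so some member already does.

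The main obstacle is that the supremum $\widehat F$ computed in $(C(\Sigma L),\leq_C)$ need not equal the ``topological'' limit lens (closure of lower parts, intersection of upper parts). That limit might not be finitely generated, and the supremum may exist only for other reasons. I would handle this by showing directly that any upper bound $\widehat H$ of $\mathcal D$ satisfies $\upar H\subseteq\bigcap\upar G$ and $\down H\supseteq\mathrm{cl}\bigcup\down G$. I would then show that the least one forces $F\subseteq\mathrm{cl}\bigcup\down G$ and $\upar F\supseteq\bigcap\upar G$ up to the needed approximations. I would test these using elements $\widehat{F'}$ obtained by replacing points of $F$ with way-below approximants, which remain upper bounds unless the claimed inclusions hold.
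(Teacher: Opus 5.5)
Your main step cannot be carried out, because the inclusion $\mathcal{O}_{\Rightarrow_C}(C(\Sigma L))\subseteq\sig(C(\Sigma L),\leq_C)$ is false in general. It breaks exactly where you noted that the ``shrunk'' pair must still be a finite lens.

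\textbf{Counterexample.} Take $L=\{\Bot,\Top\}\cup\mathbb N$ with $\Bot<n<\Top$ for all $n$ and $\mathbb N$ an antichain. Every directed subset of $L$ has a greatest element, so $L$ is an algebraic dcpo in which every element is compact. The chain $\mathcal D=\{\widehat{\{\Bot,1,\dots,n\}}:n\geq 1\}$ is increasing for $\leq_C$. An upper bound $\widehat G$ must satisfy $\mathbb N\subseteq\down G$. For finite $G$ this forces $\Top\in G$, hence $\down G=L$. Since $\upar\{\Bot,\Top\}=L$, the lens $\widehat F=\widehat{\{\Bot,\Top\}}$ is the least upper bound, so $\widehat F=\bigvee\mathcal D$ in $(C(\Sigma L),\leq_C)$.

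However, $\bigcup_{\widehat G\in\mathcal D}\down G=\{\Bot\}\cup\mathbb N$ is Scott closed and omits $\Top\in F$, and this shrunk lower part is not finitely generated. Since $\{\Top\}$ is Scott open, $\Diamond\{\Top\}=\{\widehat H:\Top\in H\}$ is open in $\CC(\Sigma L)$ by Proposition~\ref{prop:diamond-open}. It contains $\bigvee\mathcal D$ and misses every member of $\mathcal D$, so it is not Scott open.

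Your $c$-space reduction fails on the same data. We have $\upar_C\widehat F=\Diamond\{\Top\}$, so $\widehat F\in\intr(\upar_C\widehat F)$. Yet no member of $\mathcal D$ lies above $\widehat F$, and $\mathcal D\not\Rightarrow_C\widehat F$, because no directed subset of $\{\Bot\}\cup\mathbb N$ converges to $\Top$.

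The paper's proof has the same hole. It asserts that $\widehat A\prec_C\widehat F$ implies $\widehat A\ll\widehat F$ in $C(\Sigma L)$, justified only by an appeal to ``the preceding finite argument''. Here $\widehat F\prec_C\widehat F$ holds because $\Bot$ and $\Top$ are compact, while $\widehat F$ is not way below itself. So the obstacle in your last paragraph is genuine: order-suprema in $C(\Sigma L)$ need not be the lens built from the closure of the lower parts and the intersection of the upper parts. The proposition, and the implication displayed just before it, survive only as the inclusion $\sig(C(\Sigma L),\leq_C)\subseteq\mathcal{O}_{\Rightarrow_C}(C(\Sigma L))$.

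Your treatment of that surviving inclusion is also incomplete. Your argument ($\widehat F\leq_C\bigvee\mathcal D$ because principal down-sets are closed) only covers directed families that have a supremum, and $C(\Sigma L)$ is usually not a dcpo. Already for the flat domain $\{\Bot\}\cup\mathbb N$, the chain above has no upper bound.

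The paper closes this direction by approximation. The set $B_F=\{\widehat A:\widehat A\prec_C\widehat F\}$ is directed with supremum $\widehat F$, so a Scott-open $\mathcal V\ni\widehat F$ contains some $\widehat A\prec_C\widehat F$. The witnesses of $\mathcal D\Rightarrow_C\widehat F$, together with $a\ll f_a\in\Lim(D_i)$, then produce a member of $\mathcal D$ above $\widehat A$. Alternatively, the inclusion follows from Theorem~\ref{the:continuity-preserved}: in a continuous directed space each point is the order-supremum of its directed set of $\ll_d$-approximants, and $y\ll_d x$ implies $x\in\intr(\upar y)$.

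Finally, the ``coarsest directed topology'' fact you dismiss is correct for dcpos: if $D\to x$ then $x\leq\sup D$, since $\down\sup D$ is closed. It is unavailable here only because $C(\Sigma L)$ need not be a dcpo.
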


\begin{proof}
Write $C_L=C(\Sigma L)$.  For nonempty finite subsets $A,F\subseteq L$, write
\(
        \widehat A\prec_C\widehat F
\)
if each $a\in A$ is way-below some $f\in F$, and each $f\in F$ is above some $a\in A$ with $a\ll f$.  We first recall the finite Plotkin approximation fact needed below.  For fixed $\widehat F\in C_L$, the set
\[
        B_F=\{\widehat A\in C_L:\widehat A\prec_C\widehat F\}
\]
is directed and has supremum $\widehat F$ in $C_L$.

Moreover $\widehat A\prec_C\widehat F$ implies $\widehat A\ll\widehat F$ in the ordered set $C_L$.

Indeed, if $\widehat A,\widehat B\in B_F$, then for each $f\in F$ collect the finitely many elements of $A\cup B$ assigned below $f$, and also one witness from $A$ and one from $B$ way-below $f$.  Since $\{x:x\ll f\}$ is directed, choose $c_f\ll f$ above this finite set and put $C=\{c_f:f\in F\}$.  Then $\widehat A,\widehat B\leq_C\widehat C\prec_C\widehat F$.  Thus $B_F$ is directed.  It is clear that every member of $B_F$ is below $\widehat F$.  If $\widehat G$ is any upper bound of $B_F$, then for each $f\in F$ and each $b\ll f$, choosing a finite approximant $\widehat A_b\prec_C\widehat F$ with $b\in A_b$ gives $b\in\down G$; by continuity, $f\in\down G$.  Hence $\down F\subseteq\down G$.  If some $g\in G$ were above no element of $F$, choose $b_f\ll f$ with $b_f\not\leq g$ for every $f\in F$; then $\widehat{\{b_f:f\in F\}}\prec_C\widehat F\leq_C\widehat G$, forcing $g\in\uparrow\{b_f:f\in F\}$, a contradiction.  Thus $\upar G\subseteq\upar F$, and $\widehat F\leq_C\widehat G$.  Hence $\sup B_F=\widehat F$.  The way-below assertion follows from the same finite interpolation argument: if $\widehat A\prec_C\widehat F\leq_C\sup\mathcal E$ for a directed family $\mathcal E$ with existing supremum, interpolate to a finite $\widehat C$ with $\widehat A\leq_C\widehat C\prec_C\sup\mathcal E$, and the preceding finite argument gives some $\widehat E\in\mathcal E$ with $\widehat C\leq_C\widehat E$.

First, we prove that every Scott-open subset of $C_L$ is $\Rightarrow_C$-open.  Let $\mathcal V\in\sig(C_L,\leq_C)$ and suppose $\mathcal D\Rightarrow_C\widehat F\in\mathcal V$, witnessed by directed sets $D_1,\ldots,D_k$ in $\Sigma L$.  By the preceding paragraph, choose $\widehat A\prec_C\widehat F$ with $\widehat A\in\mathcal V$; since $\mathcal V$ is an upper set, it is enough to find $\widehat G\in\mathcal D$ with $\widehat A\leq_C\widehat G$.

For each $a\in A$, choose $f_a\in F$ with $a\ll f_a$.  By the limit-covering clause for $\Rightarrow_C$, choose $i(a)$ with $f_a\in\Lim(D_{i(a)})$, and then choose $d_a\in D_{i(a)}$ with $a\leq d_a$.  The lower-coordinate clause gives $\widehat G_a\in\mathcal D$ with $d_a\in\down G_a$.  Directedness of $\mathcal D$ gives $\widehat G_0\in\mathcal D$ above all $\widehat G_a$, hence $\down A\subseteq\down G_0$.

For the upper component, choose $f_i\in F\cap\Lim(D_i)$ for each $i$, then choose $a_i\in A$ with $a_i\ll f_i$ and $d_i\in D_i$ with $a_i\leq d_i$.  The upper-coordinate clause yields $\widehat G_1\in\mathcal D$ such that
\[
        \upar G_1\subseteq\bigcup_i\upar d_i
        \subseteq\bigcup_i\upar a_i
        \subseteq\upar A .
\]
Taking $\widehat G\in\mathcal D$ above $\widehat G_0$ and $\widehat G_1$, we obtain $\down A\subseteq\down G$ and $\upar G\subseteq\upar A$, hence $\widehat A\leq_C\widehat G$.  Therefore $\widehat G\in\mathcal V$, proving $\mathcal D\cap\mathcal V\neq\varnothing$.

Conversely, let $\mathcal V\in\mathcal{O}_{\Rightarrow_C}(C_L)$.  It is an upper set.
Let $\mathcal D\subseteq C_L$ be directed with existing supremum $\widehat F=\sup\mathcal D\in\mathcal V$, say $F=\{f_1,\ldots,f_n\}$.  For each $i$, put
\[
        E_i=\{e\in L:e\ll f_i
        \text{ and }e\in\bigcup_{\widehat G\in\mathcal D}\down G\}.
\]
If $e\ll f_i$, choose a finite set $A_e$ with $e\in A_e$ and $\widehat A_e\prec_C\widehat F$.  Since $\widehat A_e\ll\widehat F=\sup\mathcal D$, there is $\widehat G\in\mathcal D$ with $\widehat A_e\leq_C\widehat G$, so $e\in\down G$.  Thus $E_i=\{e:e\ll f_i\}$, and each $E_i$ is directed and converges to $f_i$ in $\Sigma L$.  These $E_i$ satisfy the lower-coordinate and limit-covering clauses of $\Rightarrow_C$.

For a tuple $(e_1,\ldots,e_n)\in E_1\times\cdots\times E_n$, let $A=\{e_1,\ldots,e_n\}$.  Then $\widehat A\prec_C\widehat F$, hence $\widehat A\ll\widehat F=\sup\mathcal D$.  Choose $\widehat G\in\mathcal D$ with $\widehat A\leq_C\widehat G$.  Then
\(
        \upar G\subseteq\upar A=\bigcup_{i=1}^n\upar e_i,
\)
which is the upper-coordinate clause.  Therefore $\mathcal D\Rightarrow_C\widehat F$.  Since $\mathcal V$ is $\Rightarrow_C$-open and $\widehat F\in\mathcal V$, we have $\mathcal D\cap\mathcal V\neq\varnothing$.  Thus $\mathcal V$ is Scott open. Hence $\mathcal{O}_{\Rightarrow_C}(C_L)\subseteq\sig(C_L,\leq_C)$, and the two topologies coincide.
\end{proof}

\begin{note}
The failure of the implication from a compact-lens representation to continuity is witnessed by the two-chain dcpo $Y$.  For $Y$, the lens poset $\KConv(Y)$ is very small.  Its Scott closed lower sets are exactly the finite two-sided prefixes
\[
        \{a_1,\ldots,a_m\}\cup\{b_1,\ldots,b_n\}
\]
with one side allowed to be empty, together with $Y$ itself.  Indeed, an infinite prefix on either chain has supremum $\omega$, and a lower set containing $\omega$ is all of $Y$.  Its nonempty compact saturated subsets are the finite generated upper sets
\[
        \upar F\qquad(\varnothing\neq F\subseteq_{\fin}Y),
\]
including $\{\omega\}$, $\upar a_m$, $\upar b_n$, $\upar\{a_m,b_n\}$ and $Y$.  Hence every nonempty lens $C\cap Q$ is finite generated, and we have
\[
      \CC(\Sigma Y) = \Sigma \PC(Y) \text{ and  } \PC(Y)  \cong \KConv(Y).
\]
Nevertheless, $\CC(\Sigma Y)$ is not continuous by Theorem~\ref{the:continuity-reflection}, since $Y$ is not continuous.  Equivalently, $\PC(Y)$ is not continuous by Corollary~\ref{cor:convex-continuity-equivalence}.  Therefore a bare order isomorphism with a compact-lens poset does not imply continuity of the directed convex powerspace.
\end{note}

The entries of the comparison table in the Introduction are justified as follows.  The equivalence between directed-powerspace and powerdomain entries follows from the Scott-completion transfer in Theorem~\ref{the:qc-completion}.  For the lower construction, all four entries are the known Hoare-powerdomain results in Theorem~\ref{the:lower-pres-ref}.  Continuity preservation for all three powerdomains follows from Theorem~\ref{the:continuity-preserved}; for lower and upper powerspaces in directed spaces see also \cite{ChenKouXie2024RMJ}.  Upper powerdomains do not reflect continuity by the standard two-chain counterexample; see \cite{HeckmannKeimel2013,ChenKouLyu2024}.  Convex powerdomains reflect continuity by Corollary~\ref{cor:convex-continuity-equivalence}.  For quasicontinuity, upper powerdomains do not preserve it by \cite{ChenKouLyu2024}, while Theorem~\ref{the:upper-qc-reflect} gives reflection.  Convex powerdomains do not preserve quasicontinuity by Theorem~\ref{the:main}, and Theorem~\ref{the:convex-qc-reflect} gives reflection.

\section{A Final Retract Analysis}

For completeness, and also to clarify the role of the directed-space framework, we collect here two ways to obtain a retraction from the convex powerdomain onto the upper powerdomain when the base dcpo has a greatest element.  The first is obtained formally by Scott completion from the directed-space retraction.  The second is a direct construction at the level of free dcpo-algebras.

\begin{theor}[Completion-induced dcpo retraction]\label{the:completion-dcpo-retract}
Let $L$ be a dcpo with greatest element.  Then $\PU(L)$ is a Scott-continuous retract of $\PC(L)$.
\end{theor}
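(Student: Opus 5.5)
The plan is to apply the Scott completion functor to the directed-space retraction of Theorem~\ref{the:directed-retract}. Let $X=\Sigma L$; since $L$ has a greatest element $\Top$, so does $X$ in its specialization order. Theorem~\ref{the:directed-retract} then gives continuous maps $e\colon\UU(\Sigma L)\to\CC(\Sigma L)$ and $r\colon\CC(\Sigma L)\to\UU(\Sigma L)$ with $r\circ e=\id$. By Theorem~\ref{the:scott-completion}, the Scott completions are $\PU(L)$ and $\PC(L)$, with completion maps $j_U\colon\UU(\Sigma L)\to\Sigma\PU(L)$ and $j_C\colon\CC(\Sigma L)\to\Sigma\PC(L)$.

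Next I extend $e$ and $r$ to the completions. The composite $j_C\circ e\colon\UU(\Sigma L)\to\Sigma\PC(L)$ is continuous into the Scott space of a dcpo. By the universal property of the Scott completion it extends uniquely to a Scott-continuous map $\bar e\colon\PU(L)\to\PC(L)$ with $\bar e\circ j_U=j_C\circ e$. In the same way, $j_U\circ r$ extends to a unique Scott-continuous $\bar r\colon\PC(L)\to\PU(L)$ with $\bar r\circ j_C=j_U\circ r$.

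Finally, I check $\bar r\circ\bar e=\id_{\PU(L)}$ using the uniqueness clause. The map $\bar r\circ\bar e$ is Scott-continuous, hence continuous $\Sigma\PU(L)\to\Sigma\PU(L)$. It satisfies
\[
\bar r\circ\bar e\circ j_U=\bar r\circ j_C\circ e=j_U\circ r\circ e=j_U .
\]
The identity also extends $j_U$, so uniqueness forces $\bar r\circ\bar e=\id$.

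The main point to watch is the functoriality step: the universal property must be applied to the composites $j_C\circ e$ and $j_U\circ r$, and uniqueness must be used for maps between Scott spaces. This requires that Scott-continuous maps between dcpos are exactly the continuous maps between their Scott spaces, which is standard. No semilattice structure is needed, since the claim is only a retraction in $\DCPO$.
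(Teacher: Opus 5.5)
Your proposal is correct and follows the paper's proof: apply Theorem~\ref{the:directed-retract} to $\Sigma L$, extend $e$ and $r$ to $\bar e$ and $\bar r$ via the universal property of Scott completion, and use uniqueness of extensions to conclude $\bar r\circ\bar e=\id_{\PU(L)}$. Your explicit use of the composites $j_C\circ e$ and $j_U\circ r$, together with the check $\bar r\circ\bar e\circ j_U=j_U$, spells out details the paper leaves implicit.
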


\begin{proof}
Apply Theorem~\ref{the:directed-retract} to the Scott space $\Sigma L$.  By Theorem~\ref{the:scott-completion}, the Scott completion of the directed upper powerspace is $\PU(L)$, while the Scott completion of the directed convex powerspace is the free dcpo-semilattice $\PC(L)$.  By the universal property of Scott completion, the continuous maps
\[
        e\colon\UU(\Sigma L)\to\CC(\Sigma L),
        \qquad
        r\colon\CC(\Sigma L)\to\UU(\Sigma L)
\]
uniquely extend to Scott-continuous maps
\[
        \bar e\colon \PU(L)\to\PC(L),
        \qquad
        \bar r\colon \PC(L)\to\PU(L).
\]
Since $r\circ e=\id_{\UU(\Sigma L)}$, uniqueness of the extension gives $\bar r\circ\bar e=\id_{\PU(L)}$.  Hence $\PU(L)$ is a Scott-continuous retract of $\PC(L)$.
\end{proof}

\begin{lema}[Generation by the unit]\label{lema:generation-dcpo}
Let $(A,\eta)$ be a free dcpo-algebra over a dcpo $L$ in a category of dcpo-algebras whose morphisms are Scott-continuous homomorphisms.  If $B\subseteq A$ is a sub-dcpo and a subalgebra such that $\eta[L]\subseteq B$, then $B=A$.
\end{lema}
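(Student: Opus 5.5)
The plan is to use the standard "free algebra is generated by its unit" argument via the universal property. Let $B\subseteq A$ be a sub-dcpo and a subalgebra containing $\eta[L]$. Since $B$ is closed under directed suprema taken in $A$, it is a dcpo in the inherited order, and the inclusion $j\colon B\to A$ is Scott-continuous. Since $B$ is also closed under the operations, it is a dcpo-algebra of the same kind, and $j$ is a homomorphism. For the inflationary and deflationary cases, the equational or inequational laws hold in $B$ because they hold in $A$ and the order and operations of $B$ are restrictions. The corestriction $\eta_B\colon L\to B$, $\eta_B(x)=\eta(x)$, is Scott-continuous, because directed suprema in $B$ agree with those in $A$.

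By freeness of $(A,\eta)$ there is a unique Scott-continuous homomorphism $h\colon A\to B$ with $h\circ\eta=\eta_B$. Then $j\circ h\colon A\to A$ is a Scott-continuous homomorphism satisfying $j\circ h\circ\eta=j\circ\eta_B=\eta$. The identity $\id_A$ has the same property, so uniqueness in the universal property gives $j\circ h=\id_A$. Hence $j$ is surjective, and since $j$ is an inclusion, $B=A$.

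The only step that needs care is checking that $B$, with the restricted order and operations, genuinely belongs to the category of dcpo-algebras in question. This means checking three things: that directed suprema in $B$ coincide with those of $A$, so that $B$ is a dcpo and the operations restricted to $B$ are Scott-continuous; that the defining (in)equations are inherited; and that the corestricted unit is continuous. All three follow because $B$ is closed under directed suprema of $A$. Once $B$ is known to be an object of the category, the uniqueness argument above is immediate.
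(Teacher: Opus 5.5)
Your proof is correct and is essentially the paper's argument: you get $h\colon A\to B$ from freeness with $h\circ\eta=\eta_B$, and uniqueness then forces $i\circ h=\id_A$, so the inclusion is surjective and $B=A$. You also check explicitly that $B$ is an object of the category: directed suprema are computed as in $A$, the (in)equations are inherited, and the corestricted unit is continuous. The paper leaves this verification implicit.
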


\begin{proof}
View $\eta$ as a map $\eta_B\colon L\to B$.  By the freeness of $A$, there exists a unique homomorphism
\[
        h\colon A\to B
\]
such that $h\circ\eta=\eta_B$.  Let $i\colon B\hookrightarrow A$ be the inclusion.  Then $i\circ h\colon A\to A$ is a homomorphism and
\[
        (i\circ h)\circ\eta=i\circ\eta_B=\eta.
\]
The identity map $\id_A$ has the same property.  By uniqueness in the universal property of $A$,
\[
        i\circ h=\id_A.
\]
Thus $i$ is surjective, and hence $B=A$.
\end{proof}

\begin{theor}[Direct dcpo retraction]\label{the:direct-dcpo-retract}
Let $L$ be a dcpo with greatest element $\Top$.  Then $\PU(L)$ is a Scott-continuous retract of $\PC(L)$.
\end{theor}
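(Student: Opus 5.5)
The plan is to build both maps directly from the universal properties of $\PC(L)$ as the free dcpo-semilattice and $\PU(L)$ as the free deflationary dcpo-semilattice, with units $\eta_C\colon L\to\PC(L)$ and $\eta_U\colon L\to\PU(L)$, and not to pass through directed spaces. Write $+$ for both operations.

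\emph{The retraction $\bar r$.} Since $\PU(L)$ is in particular a dcpo-semilattice and $\eta_U$ is Scott-continuous, freeness of $\PC(L)$ gives a unique Scott-continuous homomorphism $\bar r\colon\PC(L)\to\PU(L)$ with $\bar r\circ\eta_C=\eta_U$.

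\emph{The section $\bar e$.} Let $t=\eta_C(\Top)$ and put
\[
        P=\{p\in\PC(L): p+t=p\}.
\]
Because $+$ is Scott-continuous, $P$ is closed under directed suprema, so it is a sub-dcpo. It is closed under $+$, since $(p+q)+t=(p+t)+(q+t)$ by idempotency, associativity and commutativity. The key claim is that $P$ is deflationary, i.e.\ $p+q\le p$ for $p,q\in P$. Granting this, $P$ is a deflationary dcpo-semilattice. The map $x\mapsto\eta_C(x)+t$ is a Scott-continuous map $L\to P$, so freeness of $\PU(L)$ yields a Scott-continuous homomorphism $\bar e\colon\PU(L)\to P\subseteq\PC(L)$ with $\bar e(\eta_U(x))=\eta_C(x)+t$.

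To prove the deflationary claim, I would first observe that $\eta_C$ is monotone, so $\eta_C(x)\le t$ for every $x$. Scott-continuity of $+$ then gives $\eta_C(x)+t\le t+t=t$, and hence $\eta_C(x)+\eta_C(y)+t\le\eta_C(x)+t$. The main obstacle is lifting this inequality from generators to arbitrary elements of $\PC(L)$. The idea is to show that for fixed $q$ with $q+t=q$, the set of $p$ satisfying $p+q+t\le p+t$ contains $\eta_C[L]$, is closed under $+$, and is closed under directed suprema. Doing this in two stages (first fix a generator for $q$, then vary $q$) and applying Lemma~\ref{lema:generation-dcpo} shows the set is all of $\PC(L)$. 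I need to check the order of quantifiers carefully here, since both $p$ and $q$ vary. Closure under directed sups is clear because both sides are Scott-continuous in $p$. Closure under $+$ follows from monotonicity of $+$ together with idempotency: $(p_1+p_2)+q+t=(p_1+q+t)+(p_2+q+t)\le(p_1+t)+(p_2+t)$. Applied to elements $p,q\in P$, this gives $p+q\le p$.

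\emph{Retraction identity.} The composite $\bar r\circ\bar e\colon\PU(L)\to\PU(L)$ is a Scott-continuous homomorphism, and on generators
\[
        \bar r\bar e(\eta_U(x))=\eta_U(x)+\eta_U(\Top)=\eta_U(x),
\]
where the last equality uses deflation $\eta_U(x)+\eta_U(\Top)\le\eta_U(x)$ together with the reverse inequality from monotonicity, $\eta_U(x)=\eta_U(x)+\eta_U(x)\le\eta_U(x)+\eta_U(\Top)$. Uniqueness in the universal property of $\PU(L)$ then gives $\bar r\circ\bar e=\id_{\PU(L)}$. This shows $\PU(L)$ is a Scott-continuous retract of $\PC(L)$, with section $\bar e$ and retraction $\bar r$.
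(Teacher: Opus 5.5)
Your proposal is correct and has the same overall architecture as the paper's proof. Your $P=\{p:p+t=p\}$ is exactly the paper's $T=\rho[A]$, and your maps $\bar e,\bar r$ and the final check on generators match the paper's. Where you differ is in how you establish that $P$ is deflationary.

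The paper applies Lemma~\ref{lema:generation-dcpo} once, to $B=\down t$. This set contains $\eta_C[L]$ by monotonicity, is closed under directed suprema, and is closed under $+$ since $a+b\le t+t=t$. Hence $t=\eta_C(\Top)$ is the greatest element of $\PC(L)$, and deflation of $P$ follows in one line: $p+q\le p+t=p$.

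You instead lift the inequality $p+q+t\le p+t$ by a nested two-stage induction. This works, with two remarks. First, the restriction to $q$ with $q+t=q$ is unnecessary; the argument gives the inequality for all $p,q$. Second, in the second stage the closure under $+$ is in the $q$-variable. That needs its own (easy) computation, $p+(q_1+q_2)+t=(p+q_1+t)+(p+q_2+t)\le p+t$, not the $p_1+p_2$ computation you wrote.

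Your route can be collapsed to a single stage by lifting $q+t\le t$ from generators. Even so, the paper's choice of $\down t$ is the most economical, and it yields the more informative fact that $\PC(L)$ has a top element. In the other direction, your derivation of $\eta_U(x)+\eta_U(\Top)=\eta_U(x)$ from deflation plus monotonicity is slightly more careful than the paper's appeal to ``the operation is meet''.
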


\begin{proof}
Let
\[
        A=\PC(L),\qquad \eta=\eta_C\colon L\to A,
        \qquad c=\eta(\Top).
\]
Since $\eta$ is monotone, $\eta(x)\leq c$ for every $x\in L$.  Consider
\[
        B=\down c=\{a\in A:a\leq c\}.
\]
The set $B$ is a sub-dcpo of $A$: if $D\subseteq B$ is directed, then $\bigvee_A D\leq c$.  It is also closed under the semilattice operation, because if $a,b\leq c$, then by monotonicity and idempotency,
\[
        a+b\leq c+c=c.
\]
Moreover $\eta[L]\subseteq B$.  By Lemma~\ref{lema:generation-dcpo}, $B=A$.  Hence $c$ is the greatest element of $A=\PC(L)$.

Define
\[
        \rho\colon A\to A,\qquad \rho(a)=a+c.
\]
The map $\rho$ is Scott continuous.  By associativity and idempotency,
\[
        \rho(\rho(a))=(a+c)+c=a+c=\rho(a),
\]
so $\rho$ is idempotent.  Let
\[
        T=\rho[A]=\{a+c:a\in A\}.
\]
Then $T$ is a Scott-continuous retract of $A$, hence a dcpo.  It is closed under $+$: if $a,b\in T$, then
\[
        \rho(a+b)=a+b+c=(a+c)+b=a+b,
\]
so $a+b\in T$.

We claim that $T$ is deflationary.  Let $a,b\in T$.  Since $c$ is greatest in $A$, $b\leq c$.  Therefore
\[
        a+b\leq a+c=\rho(a)=a.
\]
Thus $T$ is a deflationary dcpo-semilattice.

Define
\[
        j\colon L\to T,\qquad j(x)=\eta_C(x)+c.
\]
By the universal property of $\PU(L)$, there exists a unique deflationary homomorphism
\[
        \alpha\colon \PU(L)\to T
        \quad\text{such that}\quad
        \alpha\circ\eta_U=j.
\]
By the universal property of $\PC(L)$, since $\PU(L)$ is a dcpo-semilattice, there exists a unique semilattice homomorphism
\[
        \beta\colon \PC(L)\to\PU(L)
        \quad\text{such that}\quad
        \beta\circ\eta_C=\eta_U.
\]
Let $i\colon T\hookrightarrow\PC(L)$ be the inclusion and set
\[
        e=i\circ\alpha\colon \PU(L)\to\PC(L),
        \qquad
        r=\beta\colon \PC(L)\to\PU(L).
\]
Both maps are Scott continuous.  For every $x\in L$,
\[
        (r\circ e)(\eta_U(x))
        =\beta(\eta_C(x)+c)
        =\eta_U(x)+\eta_U(\Top).
\]
Since $x\leq\Top$, we have $\eta_U(x)\leq\eta_U(\Top)$.  In a deflationary dcpo-semilattice the operation is meet, hence
\[
        \eta_U(x)+\eta_U(\Top)=\eta_U(x).
\]
Thus $r\circ e$ and $\id_{\PU(L)}$ agree on the generators $\eta_U[L]$.  By uniqueness in the universal property of $\PU(L)$,
\[
        r\circ e=\id_{\PU(L)}.
\]
Therefore $\PU(L)$ is a Scott-continuous retract of $\PC(L)$.
\end{proof}

\begin{note}
The direct proof above is illuminating, but it also reveals why the directed powerspace framework is conceptually cleaner.  At the dcpo level one must first employ the generation lemma to prove that $\eta_C(\Top)$ is actually the greatest element of $\PC(L)$, then construct the idempotent $\rho(a)=a+\eta_C(\Top)$, identify its image as a deflationary dcpo-semilattice, and finally utilize two universal properties to manufacture the maps $e$ and $r$.

In the directed powerspace proof, the same retraction is directly visible on finite generators:
\[
        e(\upar F)=\widehat{F\cup\{\Top\}},
        \qquad
        r(\widehat F)=\upar F.
\]
Continuity is verified via the convergence definitions $\Rightarrow_U$ and $\Rightarrow_C$, and the identity $r\circ e=\id_{\UU(X)}$ follows immediately from $\upar(F\cup\{\Top\})=\upar F$.  Thus the directed-space construction separates the combinatorial content of the retraction from the completion step.  This is precisely the advantage exploited in the passage from the upper-powerdomain counterexample to the convex-powerdomain counterexample.
\end{note}


\begin{thebibliography}{99}

\bibitem{AbramskyJung1994}
S.~Abramsky and A.~Jung,
\newblock Domain theory,
\newblock in S.~Abramsky, D.~M. Gabbay and T.~S.~E. Maibaum (eds.),
\emph{Handbook of Logic in Computer Science}, Vol.~3, Oxford University Press, 1994, pp.~1--168.

\bibitem{BattenfeldSchroder2015}
I.~Battenfeld and M.~Schröder,
\newblock Observationally-induced lower and upper powerspace constructions,
\newblock \emph{Journal of Logical and Algebraic Methods in Programming} \textbf{84}(5) (2015), 668--682.

\bibitem{ChenKouLyu2024}
Y.~Chen, H.~Kou and Z.~Lyu,
\newblock Upper powerdomains of quasicontinuous dcpos,
\newblock \emph{Theoretical Computer Science} \textbf{1006} (2024), 114663.

\bibitem{ChenKouLyu2024Free}
Y.~Chen, H.~Kou and Z.~Lyu,
\newblock Free algebras over continuous spaces,
\newblock \emph{Houston Journal of Mathematics} \textbf{50}(4) (2024), 751--776.

\bibitem{ChenKouLyuXie2024Cones}
Y.~Chen, H.~Kou, Z.~Lyu and X.~Xie,
\newblock A construction of free dcpo-cones,
\newblock \emph{Mathematical Structures in Computer Science} \textbf{34}(1) (2024), 63--79.

\bibitem{ChenKouXie2024RMJ}
Y.~Chen, H.~Kou and X.~Xie,
\newblock Upper and lower powerspaces of directed spaces,
\newblock \emph{Rocky Mountain Journal of Mathematics} \textbf{54}(5) (2024), 1299--1314.

\bibitem{Erne2009}
M.~Erné,
\newblock Infinite distributive laws versus local connectedness and compactness properties,
\newblock \emph{Topology and its Applications} \textbf{156}(12) (2009), 2054--2069.

\bibitem{GierzLawsonStralka1983}
G.~Gierz, J.~D. Lawson and A.~R. Stralka,
\newblock Quasicontinuous posets,
\newblock \emph{Houston Journal of Mathematics} \textbf{9}(2) (1983), 191--208.

\bibitem{GierzEtAl2003}
G.~Gierz, K.~H. Hofmann, K.~Keimel, J.~D. Lawson, M.~Mislove and D.~S. Scott,
\newblock \emph{Continuous Lattices and Domains},
\newblock Encyclopedia of Mathematics and its Applications, Vol.~93, Cambridge University Press, 2003.

\bibitem{Heckmann1991}
R.~Heckmann,
\newblock Power domain constructions,
\newblock \emph{Science of Computer Programming} \textbf{17}(1--3) (1991), 77--117.

\bibitem{Heckmann1991IPL}
R.~Heckmann,
\newblock Lower and upper power domain constructions commute on all cpos,
\newblock \emph{Information Processing Letters} \textbf{40}(1) (1991), 7--11.

\bibitem{Heckmann2001}
R.~Heckmann,
\newblock Characterising FS domains by means of power domains,
\newblock \emph{Theoretical Computer Science} \textbf{264}(2) (2001), 195--203.

\bibitem{HeckmannKeimel2013}
R.~Heckmann and K.~Keimel,
\newblock Quasicontinuous domains and the Smyth powerdomain,
\newblock \emph{Electronic Notes in Theoretical Computer Science} \textbf{298} (2013), 215--232.

\bibitem{JungMoshierVickers2008}
A.~Jung, M.~A. Moshier and S.~Vickers,
\newblock Presenting dcpos and dcpo algebras,
\newblock in A.~Bauer and M.~Mislove (eds.),
\emph{Proceedings of the 24th Conference on the Mathematical Foundations of Programming Semantics (MFPS XXIV)},
\newblock \emph{Electronic Notes in Theoretical Computer Science} \textbf{218} (2008), 209--229.

\bibitem{KeimelLawson2009}
K.~Keimel and J.~D. Lawson,
\newblock $D$-completions and the $d$-topology,
\newblock \emph{Annals of Pure and Applied Logic} \textbf{159}(3) (2009), 292--306.

\bibitem{KeimelLawson2009Ops}
K.~Keimel and J.~D. Lawson,
\newblock Extending algebraic operations to $D$-completions,
\newblock \emph{Electronic Notes in Theoretical Computer Science} \textbf{249} (2009), 93--116.

\bibitem{Koslowski1997}
J.~Koslowski,
\newblock Note on free algebras over continuous domains,
\newblock \emph{Theoretical Computer Science} \textbf{179}(1--2) (1997), 421--425.

\bibitem{GoubaultLarrecq2013}
J.~Goubault-Larrecq,
\newblock \emph{Non-Hausdorff Topology and Domain Theory: Selected Topics in Point-Set Topology},
\newblock New Mathematical Monographs, Vol.~22, Cambridge University Press, 2013.

\bibitem{LuoXu2017}
S.~Luo and X.~Xu,
\newblock On monotone determined spaces,
\newblock \emph{Electronic Notes in Theoretical Computer Science} \textbf{333} (2017), 63--72.

\bibitem{Plotkin1976}
G.~D. Plotkin,
\newblock A powerdomain construction,
\newblock \emph{SIAM Journal on Computing} \textbf{5}(3) (1976), 452--487.

\bibitem{Smyth1978}
M.~B. Smyth,
\newblock Power domains,
\newblock \emph{Journal of Computer and System Sciences} \textbf{16}(1) (1978), 23--36.

\bibitem{Xie2021}
X.~Xie,
\newblock \emph{The Powerstructures of Directed Spaces and Related Problems},
\newblock Ph.D. thesis, Sichuan University, 2021.

\end{thebibliography}
\end{document}